\documentclass{article}

\PassOptionsToPackage{numbers, compress}{natbib}
\usepackage[preprint]{neurips}

\usepackage[utf8]{inputenc} 
\usepackage[T1]{fontenc}    
\usepackage{hyperref}       
\usepackage{url}            
\usepackage{booktabs}       
\usepackage{amsfonts}       
\usepackage{nicefrac}       
\usepackage{microtype}      
\usepackage{xcolor}         

\usepackage{amsmath,amsfonts,amssymb,amsthm}
\usepackage{mathtools}
\usepackage{algorithm}
\usepackage{algorithmic}
\usepackage{graphicx}
\usepackage{subcaption}
\usepackage{wrapfig}

\theoremstyle{plain}
\theoremstyle{definition}

\newtheorem{assumption}{Assumption}[section]
\newtheorem{lemma}{Lemma}[section]
\newtheorem{theorem}{Theorem}[section]

\newtheorem{corollary}{Corollary}[section]
\theoremstyle{remark}
\newtheorem{remark}{Remark}[section]

\title{Adam-SHANG: A Convergent Adam-Type Method for Stochastic Smooth Convex Optimization} 

\author{
	Yaxin Yu \\
	School of Mathematics \\
	Sichuan University \\
	Chengdu, China \\
	\texttt{yaxin\_yu@stu.scu.edu.cn}
	\And
	Long Chen\thanks{Corresponding author.} \\
	Department of Mathematics \\
	University of California, Irvine \\
	Irvine, CA, USA \\
	\texttt{lchen7@uci.edu}
	\And
	Minfu Feng \\
	School of Mathematics \\
	Sichuan University \\
	Chengdu, China \\
	\texttt{fmf@scu.edu.cn}
}

\begin{document}
	
\maketitle

\begin{abstract}
We propose \emph{Adam-SHANG}, a Lyapunov-guided Adam-type method that couples momentum, adaptive preconditioning, and a curvature-aware correction through a more stable lagged-preconditioner update. For stochastic smooth convex optimization, we prove convergence in expectation under an admissible stepsize condition that can always be satisfied by a conservative spectral bound, without imposing global monotonicity on the second-moment sequence. To obtain a less conservative practical rule, we introduce a computable trace-ratio stepsize, motivated by a local coordinatewise alignment condition. The same structural update is also tested beyond the convex setting with simplified parameters. Experiments validate the predicted stochastic decay and show competitive training performance against Adam and AdamW on deep learning tasks.
\end{abstract}

\section{Introduction}

We consider the unconstrained optimization problem
\begin{equation}\label{eq:problem}
	\min_{x\in\mathbb{R}^d} f(x),
\end{equation}
where $f:\mathbb{R}^d\to\mathbb{R}$ is continuously differentiable, and let
$x^{\star} \in \arg\min_x f(x)$ denote a global minimizer.
Adam~\citep{kingma2017adammethodstochasticoptimization} is among the most widely used optimizers for solving \eqref{eq:problem} in deep learning. Its success is often attributed to coordinatewise adaptive preconditioning: an exponential moving average (EMA) of the second moment rescales the gradient to account for different coordinate scales. This is especially useful in modern architectures, where gradient scales may vary greatly across parameter groups. Yet the theory faces a basic obstacle: momentum and adaptive preconditioning are coupled in a highly nonlinear way.

A recent work~\citep{yu2026adamhnagconvergentreformulationadam} addresses this obstacle in the \emph{deterministic, full-batch} setting. It applies a variable and operator splitting (VOS) reformulation~\citep{chen2025acceleratedgradientmethodsvariable}, which splits the coupled momentum as $m/\sqrt V=x-y$, and combines it with the Hessian-driven Nesterov accelerated gradient (HNAG) flow~\citep{chen2019orderoptimizationmethodsbased}. This leads to the \emph{Adam-HNAG flow}
\begin{equation}\label{eq:Adam-HNAG flow}
	\left\{
	\begin{aligned}
		x' &= y-x-\beta P^{-1}g(x),\\
		y' &= -P^{-1}g(x),\\
		P' &= -P+\gamma P^{-1}G^2,
	\end{aligned}
	\right.
\end{equation}
where $'$ denotes differentiation with respect to $t$, $g(x)=\nabla f(x)$, $G^2=\mathrm{diag}\big(g(x).^2\big)$, $\beta:[0,\infty)\to(0,\infty)$ is continuously differentiable, and $\gamma:[0,\infty)\to[0,\infty)$ is a time-scaling factor. With initial conditions $x(0)=x_0$, $y(0)=y_0$, and $P(0)=P_0\succ0$, this flow admits an exponentially decaying Lyapunov function. Its discretizations lead to Adam-HNAG methods with provable convergence.

\noindent \textbf{Contributions.}
1. We propose \emph{Adam-SHANG}, a Lyapunov-guided Adam-type method obtained from a stochastic discretization of the flow~\eqref{eq:Adam-HNAG flow}. 
The method couples momentum, adaptive preconditioning, and a curvature-aware correction. 
For stochastic smooth convex optimization, we prove convergence in expectation under an admissible stepsize condition, which admits a conservative spectral realization and does not require global monotonicity of the second-moment sequence.

2. We introduce a computable trace-ratio stepsize based on
$\mathrm{Tr}(P_k^{-1})/\mathrm{Tr}(P_k^{-2})$, which is the main algorithmic ingredient of Adam-SHANG and a new feature beyond deterministic Adam-HNAG. 
The trace-ratio rule provides a less restrictive practical choice, theoretically motivated by a local coordinatewise alignment condition. 
Driven by the evolving preconditioner $P_k$, this rule gives an internal stepsize adaptation mechanism, thereby reducing the need for an externally prescribed learning-rate schedule as in practical Adam.

3. We show that the same {\it Adam-SHANG} algorithm can be used beyond the convex setting with minor parameter changes. Stochastic convex experiments support the predicted decay and validate the trace-ratio rule. Deep learning experiments show that {\it Adam-SHANG} is competitive with Adam and AdamW on practical training tasks.

4. As an ablation study, we include the synchronous variant {\it Adam-SHANG-s}, which uses the updated preconditioner $P_{k+1}$ instead of $P_k$. This variant is a direct modification of Adam and provides a natural comparison for isolating the effect of the trace-ratio stepsize. The counterexample of \citet{reddi2019convergenceadam} further suggests that the nonlinear feedback in {\it Adam-SHANG} provides a stabilization mechanism different from Adam, while {\it Adam-SHANG-s} and Adam exhibit the same instability.

\noindent \textbf{Related work.}
Adam-type methods combine EMA momentum with diagonal adaptive preconditioning, but their convergence theory remains setting-dependent. In online convex optimization, they are usually studied through regret and are known to satisfy sublinear bounds under suitable assumptions~\citep{kingma2017adammethodstochasticoptimization,
	reddi2019convergenceadam,Huang_2019}. Another major line studies nonconvex optimization, where typical results give stationarity guarantees under additional assumptions on the stepsizes and momentum parameters~\citep{de2018convergenceguaranteesrmspropadam,
	zou2019sufficientconditionconvergencesadam,
	chen2019convergenceclassadamtypealgorithms,
	alacaoglu2020convergenceadaptivealgorithmsweakly,
	défossez2022simpleconvergenceproofadam,
	taniguchi2024adoptmodifiedadamconverge,
	Jiang_2024,
	zhou2024convergenceadaptivegradientmethods}.

Results for Adam-type methods on a single convex objective are more limited. The deterministic Adam-HNAG framework~\citep{yu2026adamhnagconvergentreformulationadam} provides the structural starting point for the present work. Other deterministic analyses include IMEX discretizations, quadratic convergence studies, and preconditioned accelerated methods~\citep{Bhattacharjee_2024,dereich2025sharphigherorderconvergence,trifonov2025incorporatingpreconditioningacceleratedapproaches}. In stochastic convex optimization, SHANG~\citep{yu2026shangrobuststochasticacceleration} gives a robust acceleration method without adaptive feedback. The present method can be viewed as SHANG equipped with an Adam-style adaptive preconditioner. Closest among existing Adam-type analyses, \citet{TONG2022333} prove an $O(1/\sqrt{T})$ rate, but under a monotonicity condition on the second moment that is not guaranteed by the EMA update itself. Our analysis avoids this global monotonicity condition and uses local alignment only to motivate the trace-ratio rule.

A complementary line studies Adam from a dynamical-systems viewpoint, including ODE limits, IMEX discretizations, asymptotic pseudo-trajectories, stability analysis, SDE approximations, and higher-order dynamics~\citep{dasilva2019generaldifferentialequationsmodel,
	barakat2020convergencedynamicalbehavioradam,
	Bhattacharjee_2024,
	dereich2024convergenceratesadamoptimizer,
	dereich2025odeapproximationadamalgorithm,
	ma2021qualitativestudydynamicbehavior,
	gould2024continuoustimeanalysisadaptiveoptimization,
	li2018stochasticmodifiedequationsdynamics,
	malladi2024sdesscalingrulesadaptive,
	heredia2026adamadamlikelagrangianssecondorder,
	heredia2025modelingadagradrmspropadam}. These works provide useful continuous-time insight for the discrete stochastic analysis developed here.

\noindent \textbf{Notation.}
We write $\langle\cdot,\cdot\rangle$ and $\|\cdot\|$ for the standard Euclidean inner product and norm. For a symmetric matrix $A$, define
$\langle x,y\rangle_A:=\langle Ax,y\rangle$ and $\|x\|_A^2:=\langle x,x\rangle_A$. We use MATLAB-style notation $.{\tt op}$ for componentwise operations, write $A\succ0$ if $A$ is symmetric positive definite, and denote by $\mathrm{Tr}(D)$ the trace of a diagonal matrix $D$. For a differentiable function $f$, the Bregman divergence is
\begin{equation}\label{bregman_definition}
	D_f(y,x):=f(y)-f(x)-\langle \nabla f(x),y-x\rangle.
\end{equation}
Let $\mathcal{S}_L$ denote the class of convex, $L$-smooth differentiable functions. For any $f\in\mathcal{S}_L$,
\begin{equation}\label{bregman_bound}
	\frac{1}{2L}\|\nabla f(x)-\nabla f(y)\|^2
	\le D_f(x,y)
	\le \frac{L}{2}\|x-y\|^2,
	\qquad \forall x,y\in\mathbb{R}^d.
\end{equation}

\section{Adam-SHANG: Discretization and Convergence Analysis}\label{sec:Adam-SHANG}

\paragraph{Scheme and Algorithm.}
Consider the following discretization of the Adam-HNAG flow~\eqref{eq:Adam-HNAG flow}:
\begin{subequations}\label{eq:AdamSHANG}
	\begin{align}
		\frac{x_{k+1}-x_k}{\alpha_k} &= y_k - x_{k+1} - \beta_k P_{k-1}^{-1} g_k, \label{eq:AdamSHANG-a}\\
		\frac{y_{k+1}-y_k}{\alpha_k} &= -P_k^{-1} g_{k+1}, \label{eq:AdamSHANG-b}\\
		\frac{P_{k+1}-P_k}{\alpha_k} &= - P_{k+1} + \gamma_{k} P_k^{-1} G_{k+1}^2, \label{eq:AdamSHANG-c}
	\end{align}
\end{subequations}
with initial values $x_0$, $y_0$, and $P_0\succ 0$. Here $\alpha_k>0$ is the time stepsize, $\beta_k,\gamma_k>0$ are parameters, and $G_{k+1}^2=\mathrm{diag}\big(g_{k+1}.^2\big)$, where $g_k$ is an unbiased estimator of the full gradient, $\mathbb E[g_k]=\nabla f(x_k)$. The scheme~\eqref{eq:AdamSHANG} may be viewed as SHANG~\citep{yu2026shangrobuststochasticacceleration} equipped with an Adam-style adaptive preconditioner, and is referred to as {\it Adam-SHANG}. 

In the convex experiments, we use the scheme~\eqref{eq:AdamSHANG} directly, with the coupled parameter choice in the convergence analysis. 
For deep learning tasks, we use the following simplified parameterization. Here $\lambda\in(0,1]$ controls the overall stepsize scale, $\beta\in(0,1]$ sets the strength of the correction term, typically with $\beta\le\lambda$, and $\gamma\in(0,1]$ controls the incorporation of new gradient information into the preconditioner. A small $\varepsilon>0$ is added for numerical stability.

\begin{algorithm}[htp]
	\caption{Adam-SHANG for training}
	\label{alg:adamshang}
	
	\begin{algorithmic}[1]
		
		\REQUIRE Initial parameters $x_0$, $y_0$, $P_{-1}=P_0\succ0$, positive parameters $\lambda, \beta, \gamma, \varepsilon>0$
		
		\FOR{$k = 0,1,2,\ldots$}
		
		\STATE Compute step size:
		\[
		\alpha_k =
		\lambda
		\sqrt{\frac{\mathrm{Tr}((P_k+\varepsilon I)^{-1})}
		{\mathrm{Tr}((P_k+\varepsilon I)^{-2})}}
		\]
		
		\STATE Update iterate $x_{k+1}$:
		\[
		x_{k+1}
		=
		\frac{1}{1+\alpha_k}x_k
		+
		\frac{\alpha_k}{1+\alpha_k}y_k
		-
		\frac{\alpha_k\beta}{1+\alpha_k}(P_{k-1}+\varepsilon I)^{-1}g_k
		\]
		
		\STATE Compute a stochastic gradient estimator $g_{k+1}$ of $\nabla f(x_{k+1})$
		
		\STATE Update iterate $y_{k+1}$:
		\[
		y_{k+1}
		=
		y_k-\alpha_k(P_k+\varepsilon I)^{-1}g_{k+1}
		\]
		
		\STATE Update preconditioner:
		\[
		P_{k+1}
		=
		\frac{1}{1+\alpha_k}P_k
		+
		\frac{\alpha_k\gamma}{1+\alpha_k}
		(P_k+\varepsilon I)^{-1}\mathrm{diag}(g_{k+1}.^2)
		\]
		
		\ENDFOR
		
	\end{algorithmic}
\end{algorithm}

\paragraph{Comparison with Adam.}
The main structural difference is the splitting of the momentum as $m/\sqrt{V}=x-y$. This splitting is inspired by the VOS (variable and operator splitting) framework~\cite{chen2025acceleratedgradientmethodsvariable} and leads naturally to a Lyapunov analysis.

A second difference is the extra gradient term $-\beta_k P_{k-1}^{-1} g_k$ in \eqref{eq:AdamSHANG-a}. Subtracting \eqref{eq:AdamSHANG-a} from \eqref{eq:AdamSHANG-b} gives an update for the momentum and introduces a curvature-aware correction through
$$
\nabla f(x_{k+1})-\nabla f(x_k)\approx \nabla^2 f(\xi_k)(x_{k+1}-x_k),
$$
which is the mechanism behind Hessian-driven Nesterov acceleration~\cite{chen2019orderoptimizationmethodsbased}.

A third difference is the update of the preconditioner. Adam discretizes $\tau_2 V'=-V+G^2$ and then defines the preconditioner by $P_{k+1}=\sqrt{V_{k+1}}$. In contrast, {\it Adam-SHANG} discretizes the ODE for the preconditioner $P$ directly, which yields a key cancellation in the discrete Lyapunov analysis.

\paragraph{Sufficient Decay of SGD.}

Let $(\Omega,\mathcal{F},\mathbb{P})$ be a probability space, and let $\{\mathcal{F}_k\}_{k\ge0}$ be a filtration such that $x_{k+1}$ and $P_k$ are $\mathcal{F}_k$-measurable for each $k\ge0$. At iteration $k+1$, let $g_{k+1}$ be a stochastic estimator of $\nabla f(x_{k+1})$. Throughout the one-step analysis, expectations are conditional on $\mathcal F_k$ and will be omitted when clear. Thus $x_{k+1}$ and $P_k$ are deterministic, while $y_{k+1}$ and $P_{k+1}$ are random.

\begin{assumption}[Stochastic gradient model]\label{assump:sg-model}
We assume
\begin{equation}\tag{G1}\label{eq:sg-unbiased-direct}
	\mathbb E\!\left[g_{k+1}\mid \mathcal F_k\right]=\nabla f(x_{k+1}),
\end{equation}
and that there exist constants $\sigma_0,\sigma_1\ge0$ such that
\begin{equation}\tag{G2}\label{eq:sg-metric-secondmoment}
	\mathbb E\!\left[\|g_{k+1}\|_{P_k^{-1}}^2\mid \mathcal F_k\right]
	\le
	\sigma_0^2+(1+\sigma_1^2)\|\nabla f(x_{k+1})\|_{P_k^{-1}}^2,
	\qquad \forall k\ge0.
\end{equation}
\end{assumption}

Assumption~\eqref{eq:sg-metric-secondmoment} is a preconditioned analogue of the standard additive--multiplicative noise condition. A sufficient coordinatewise condition is
	\[
	\operatorname{Var}\!\big[ g_{k+1,i}\big]
	\le
	\nu_i^2 p_{k,i}
	+
	\sigma_1^2\big(\partial_i f(x_{k+1})\big)^2,
	\qquad i=1,\dots,d,
	\]
	for some constants $\nu_i\ge0$. Then \eqref{eq:sg-metric-secondmoment} holds with
	$\sigma_0^2=\sum_{i=1}^d\nu_i^2$. This matches the role of $P_k$: larger coordinatewise scales $p_{k,i}$ allow proportionally larger noise in that coordinate.

To simplify the analysis, define the learning rate
\(\eta:=\alpha\beta\), induced by the discretization stepsize \(\alpha\) and the correction parameter \(\beta\), and introduce
\begin{equation}\label{auxi_vari}
	x^+ := x-\eta P^{-1} g(x),
\end{equation}
which is one preconditioned stochastic gradient step from $x$ with learning rate $\eta$. With this notation, {\it Adam-SHANG}~\eqref{eq:AdamSHANG} can be rewritten equivalently as \eqref{eq:AdamSHANG3}. We first prove a sufficient descent estimate for the auxiliary point $x_{k+1}^+$; the proof is deferred to Appendix~\ref{app:proof-stoch-descent}.

\begin{lemma}\label{lem:stoch-descent}
	Assume that $f$ is $L$-smooth and that $\eta_{k+1}$ is $\mathcal F_k$-measurable. Suppose $g_{k+1} \neq 0$ and Assumptions (G1)–(G2) hold. 
	If the learning rate $\eta_{k+1}$ satisfies the admissibility condition:
\begin{equation}\label{equ:Q_kinlemma}
	0<\eta_{k+1}
	\le
	\frac{q_k}{(1+\sigma_1^2)L}, \ \text{ with } q_k:=\mathbb E\!\left[\|g_{k+1}\|_{P_k^{-1}}^2\right]/ \mathbb E\!\left[\|g_{k+1}\|_{P_k^{-2}}^2\right],
\end{equation}
	then
	\begin{equation}\label{eq:stoch_descent}
		\mathbb{E}\!\left[f(x_{k+1}^+)\right]
		\le
		f(x_{k+1})
		-\frac{\eta_{k+1}}{2(1+\sigma_1^2)}
		\mathbb{E}\!\left[\|g_{k+1}\|_{P_k^{-1}}^2\right]
		+\frac{\eta_{k+1}\sigma_0^2}{1+\sigma_1^2}.
	\end{equation}
\end{lemma}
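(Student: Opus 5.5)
The plan is to apply the $L$-smoothness upper bound to the preconditioned step $x_{k+1}^+=x_{k+1}-\eta_{k+1}P_k^{-1}g_{k+1}$ and then take conditional expectations. Throughout, $x_{k+1}$, $P_k$ and $\eta_{k+1}$ are treated as deterministic, since they are $\mathcal F_k$-measurable. By the upper bound in \eqref{bregman_bound},
\[
f(x_{k+1}^+)\le f(x_{k+1})-\eta_{k+1}\langle \nabla f(x_{k+1}),P_k^{-1}g_{k+1}\rangle+\frac{L\eta_{k+1}^2}{2}\|P_k^{-1}g_{k+1}\|^2 .
\]
Here $\|P_k^{-1}g_{k+1}\|^2=\|g_{k+1}\|_{P_k^{-2}}^2$, because $P_k$ is diagonal and symmetric positive definite.

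Next I take expectations. By (G1), the linear term becomes $-\eta_{k+1}\|\nabla f(x_{k+1})\|_{P_k^{-1}}^2$. Writing $E_1:=\mathbb E\|g_{k+1}\|_{P_k^{-1}}^2$ and $E_2:=\mathbb E\|g_{k+1}\|_{P_k^{-2}}^2$, we have $E_2>0$ since $g_{k+1}\neq0$, so $q_k=E_1/E_2$ is well defined. The admissibility condition \eqref{equ:Q_kinlemma} gives $L\eta_{k+1}E_2\le E_1/(1+\sigma_1^2)$. Hence the quadratic term is bounded by
\[
\frac{L\eta_{k+1}^2}{2}E_2\le \frac{\eta_{k+1}}{2(1+\sigma_1^2)}E_1 .
\]
This reduces the curvature term to the $P_k^{-1}$-weighted second moment. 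That reduction is the point of the trace-ratio-type quantity $q_k$, and it avoids any spectral bound on $P_k$.

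To control the linear term, I rearrange (G2) as $\|\nabla f(x_{k+1})\|_{P_k^{-1}}^2\ge (E_1-\sigma_0^2)/(1+\sigma_1^2)$, which gives
\[
-\eta_{k+1}\|\nabla f(x_{k+1})\|_{P_k^{-1}}^2\le -\frac{\eta_{k+1}}{1+\sigma_1^2}E_1+\frac{\eta_{k+1}\sigma_0^2}{1+\sigma_1^2}.
\]
Adding the two estimates, the $E_1$ coefficients combine as $-1+\tfrac12=-\tfrac12$, times $\eta_{k+1}/(1+\sigma_1^2)$. This is exactly \eqref{eq:stoch_descent}.

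The argument is essentially the standard SGD descent lemma. I expect the main obstacle to be bookkeeping rather than analysis. First, I must check that conditioning on $\mathcal F_k$ legitimately fixes $\eta_{k+1}$ and $P_k$, so that $\eta_{k+1}$ can be pulled out of the expectations. Second, the step must use $P_k$, not the random $P_{k+1}$, in the quadratic term, so that $E_2$ is the right object; this is why the update uses the lagged preconditioner. I would also confirm that the definition of $x^+$ in \eqref{auxi_vari}, applied at index $k+1$ with the equivalent form \eqref{eq:AdamSHANG3}, indeed uses $P_k^{-1}g_{k+1}$.
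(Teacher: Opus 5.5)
Your proposal is correct and follows the paper's proof essentially step for step. The paper likewise applies the $L$-smoothness upper bound to the step $-\eta_{k+1}P_k^{-1}g_{k+1}$ and uses (G1) on the linear term. It then uses (G2) to trade $\|\nabla f(x_{k+1})\|_{P_k^{-1}}^2$ for $\mathbb E\|g_{k+1}\|_{P_k^{-1}}^2$ at the cost of $\eta_{k+1}\sigma_0^2/(1+\sigma_1^2)$, and the admissibility condition to absorb the $P_k^{-2}$ quadratic term into half of that quantity. One small point: you cite \eqref{bregman_bound}, which the paper states for convex $f\in\mathcal S_L$, while the lemma assumes only $L$-smoothness; the upper bound you use holds for any $L$-smooth function, so the argument is unaffected.
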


\paragraph{A practical choice of the learning rate.}\label{sec:practical alpha} A key new difficulty in the stochastic setting is that the full-gradient rule 
$\|g_{k+1}\|_{P_k^{-1}}^2/\|g_{k+1}\|_{P_k^{-2}}^2$ used in deterministic Adam-HNAG is no longer computable. 
The admissibility condition can always be satisfied by the conservative lower bound $\lambda_{\min}(P_k)$, since $q_k\ge \lambda_{\min}(P_k)$, but this choice is often too restrictive. 
We therefore seek a sharper computable lower bound for $q_k$. 
The following coordinatewise alignment assumption is used only for this lower-bound motivation, not for the main descent lemma.

\begin{assumption}[Coordinatewise alignment]\label{assump:alignment}
Let $P_k=\mathrm{diag}(p_1,\dots,p_d)$ and $s_i:=\mathbb E[g_{k+1,i}^2]$. Assume that $\{s_i\}_{i=1}^d$ and $\{p_i\}_{i=1}^d$ are similarly ordered:
$$
(s_i-s_j)(p_i-p_j)\ge 0,
\qquad \forall\, i,j\in\{1,\dots,d\}.
$$
\end{assumption}
Assumption~\ref{assump:alignment} is not implied by the update alone, but it is natural in slowly varying regimes: since each $p_i$ is updated from past squared gradients through \eqref{eq:AdamSHANG-c}, it tracks the recent coordinatewise gradient scale, making it reasonable to expect $s_i$ and $p_i$ to be similarly ordered.

\begin{lemma}[Trace lower bound]\label{lem:trace-lower-bound}
Under Assumption~\ref{assump:alignment},
\begin{equation}\label{equ:Qk inequality}
q_k = \frac{\sum_{i=1}^d p_i^{-1} s_i}
	{\sum_{i=1}^d p_i^{-2} s_i}
	\ge
	\frac{\sum_{i=1}^d p_i^{-1}}
	{\sum_{i=1}^d p_i^{-2}}
	=
	\frac{\mathrm{Tr}(P_k^{-1})}{\mathrm{Tr}(P_k^{-2})}.
\end{equation}
\end{lemma}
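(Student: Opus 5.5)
The first equality in \eqref{equ:Qk inequality} is immediate: $P_k$ is diagonal and deterministic given $\mathcal F_k$, so $\mathbb E[\|g_{k+1}\|_{P_k^{-1}}^2]=\sum_i p_i^{-1}s_i$ and $\mathbb E[\|g_{k+1}\|_{P_k^{-2}}^2]=\sum_i p_i^{-2}s_i$. For the inequality, I will cross-multiply, which is legitimate because all denominators are positive ($p_i>0$, and at least one $s_i>0$ since $g_{k+1}\neq 0$). The claim then becomes
\[
\Big(\sum_i p_i^{-1}s_i\Big)\Big(\sum_j p_j^{-2}\Big)-\Big(\sum_i p_i^{-2}s_i\Big)\Big(\sum_j p_j^{-1}\Big)\ge 0 .
\]

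The main step is a Chebyshev-type symmetrization. I expand the left-hand side as the double sum
\[
\sum_{i,j} s_i\,p_i^{-1}p_j^{-1}\big(p_j^{-1}-p_i^{-1}\big)
\]
and add to it the same sum with $i$ and $j$ swapped. This gives twice the quantity equal to
\[
\sum_{i,j}p_i^{-1}p_j^{-1}(s_i-s_j)\big(p_j^{-1}-p_i^{-1}\big)
=\sum_{i,j}p_i^{-2}p_j^{-2}(s_i-s_j)(p_i-p_j),
\]
where I used $p_j^{-1}-p_i^{-1}=(p_i-p_j)/(p_ip_j)$.

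Each term of the last sum is nonnegative: the weights $p_i^{-2}p_j^{-2}$ are positive, and $(s_i-s_j)(p_i-p_j)\ge0$ by Assumption~\ref{assump:alignment}. Hence the whole expression is nonnegative, which proves the inequality.

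The only delicate point is the sign bookkeeping in the symmetrization; everything else is routine. Equivalently, the argument is Chebyshev's sum inequality applied with weights $w_i=p_i^{-2}$ to the similarly ordered sequences $s_i$ and $p_i$.
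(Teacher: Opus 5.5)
Your proof is correct and is essentially the paper's argument: the paper applies the weighted Chebyshev sum inequality with $w_i=p_i^{-2}$, $a_i=s_i$, $b_i=p_i$, and that inequality is itself proved via the symmetrized double sum $\sum_{i,j}p_i^{-2}p_j^{-2}(s_i-s_j)(p_i-p_j)\ge 0$, which is exactly the expression you derive directly. Your remark that $\sum_i p_i^{-2}s_i>0$ is needed to cross-multiply (and for $q_k$ to be defined at all) is a point the paper leaves implicit.
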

The proof of Lemma~\ref{lem:trace-lower-bound}, based on a weighted Chebyshev sum inequality, is given in Appendix~\ref{app:practical-alpha}. 
This lower bound motivates replacing the unknown ratio $q_k$ in~\eqref{equ:Q_kinlemma} by the computable trace ratio. 

To account for possible deviations from the coordinatewise alignment condition and make the rule more robust, we introduce a safety factor $\lambda\in(0,1]$ and use the learning-rate choice
\begin{equation}\label{equ:eta choice}
	\eta_{k+1}
	=
	\frac{\lambda^2}{(1+\sigma_1^2)L}
	\frac{\mathrm{Tr}(P_k^{-1})}{\mathrm{Tr}(P_k^{-2})},
	\qquad \lambda\in(0,1].
\end{equation}

This trace-ratio rule is a new ingredient and performs well in practice. It is not needed in deterministic Adam-HNAG, where the full gradient is available, and it differs from standard Adam analyses, where the learning-rate schedule is prescribed externally. 

\paragraph{Convergence analysis}
Let
\(z_k^+=(x_k^+,y_k)^\top\), \(z_k=(x_k,y_k)^\top\), and define
\begin{equation}\label{lyapunov_discrete}
	\mathcal{E}(z_k^+,P_k):= f(x_k^+)-f(x^\star)+\frac{1}{2}\|y_k-x^\star\|_{P_k}^2.
\end{equation}
We now state the Lyapunov decay estimate in expectation.

\begin{theorem}\label{theorem:discrete stochastic}
	Let $f\in\mathcal{S}_{L}$ and assume the stochastic gradient model in Assumption~\ref{assump:sg-model}. Starting from $x_0$, $y_0$, and $P_0\succ0$, generate $(x_k,y_k,P_k)$ by \eqref{eq:AdamSHANG} and define $x_k^+$ by \eqref{auxi_vari}. Assume that there exists $R>0$ such that
	$
	\sup_{k\ge0}\|y_k-x^\star\|_\infty\le R.
	$
	Choose the coupling
	\[
	\gamma_k R^2=\alpha_k,
	\qquad
	\eta_{k+1}:=\alpha_{k+1}\beta_{k+1}=2\alpha_k^2(1+\sigma_1^2),
	\]
	and assume that the stepsize $\alpha_k$ satisfies
	\begin{equation}\label{eq:alpha-admissible}
	\alpha_k
	\le
	\frac{1}{1+\sigma_1^2}
	\left(\frac{q_k}{2L}\right)^{1/2}.
	\end{equation}
	Then for all $k\ge0$,
	$$
	\mathbb{E}\!\left[\mathcal{E}(z_{k+1}^+,P_{k+1})\right]
	\le
	\prod_{\tau=0}^k(1+\alpha_\tau)^{-1}
	\,\mathbb{E}\!\left[\mathcal{E}(z_0^+,P_0)\right]
	+
	2\sigma_0^2
	\sum_{\tau=1}^{k+1}
	\prod_{j=\tau}^{k+1}(1+\alpha_j)^{-1}\alpha_\tau^2 .
	$$
\end{theorem}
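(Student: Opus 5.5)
We prove a one-step contraction in conditional expectation,
\[
(1+\alpha_k)\,\mathbb E\!\left[\mathcal E(z_{k+1}^+,P_{k+1})\mid\mathcal F_k\right]\le \mathcal E(z_k^+,P_k)+2\sigma_0^2\alpha_k^2(1+\alpha_k),
\]
and then unroll it by the tower property. Unrolling gives exactly the product $\prod_{\tau=0}^k(1+\alpha_\tau)^{-1}$ in front of $\mathcal E(z_0^+,P_0)$, together with the geometric noise sum. One still has to check the index shift in the noise sum against the statement. The first move is to rewrite \eqref{eq:AdamSHANG-a} using $x_k^+=x_k-\eta_kP_{k-1}^{-1}g_k$ and $\eta_k=\alpha_k\beta_k$. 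This gives $(1+\alpha_k)x_{k+1}=x_k^++\alpha_k y_k$, so $x_{k+1}-x_k^+=\alpha_k(y_k-x_{k+1})$. This is the "equivalent form \eqref{eq:AdamSHANG3}", and it makes the Lyapunov function \eqref{lyapunov_discrete} natural because $x^+_k$ rather than $x_k$ appears.

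\textbf{The $f$-part.} Lemma~\ref{lem:stoch-descent} applies with $\eta_{k+1}=2\alpha_k^2(1+\sigma_1^2)$. The admissibility condition \eqref{equ:Q_kinlemma} reduces to $2\alpha_k^2(1+\sigma_1^2)^2L\le q_k$, which is exactly \eqref{eq:alpha-admissible}. The lemma then yields
\[
\mathbb E f(x_{k+1}^+)\le f(x_{k+1})-\alpha_k^2\,\mathbb E\|g_{k+1}\|^2_{P_k^{-1}}+2\alpha_k^2\sigma_0^2.
\]
Next write $f(x_{k+1})-f(x_k^+)\le \langle\nabla f(x_{k+1}),x_{k+1}-x_k^+\rangle=\alpha_k\langle\nabla f(x_{k+1}),y_k-x_{k+1}\rangle$ by convexity. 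Also $\alpha_k(f(x_{k+1})-f^\star)\le\alpha_k\langle\nabla f(x_{k+1}),x_{k+1}-x^\star\rangle$. Adding these gives
\[
(1+\alpha_k)(f(x_{k+1})-f^\star)\le f(x_k^+)-f^\star+\alpha_k\langle \nabla f(x_{k+1}),y_k-x^\star\rangle .
\]

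\textbf{The $y$,$P$-part.} Set $e=y_k-x^\star$, which is $\mathcal F_k$-measurable. From \eqref{eq:AdamSHANG-c}, $(1+\alpha_k)P_{k+1}=P_k+\alpha_k\gamma_kP_k^{-1}G_{k+1}^2$. Expanding $y_{k+1}-x^\star=e-\alpha_kP_k^{-1}g_{k+1}$ gives
\[
\tfrac{1+\alpha_k}{2}\|y_{k+1}-x^\star\|^2_{P_{k+1}}=\tfrac12\|y_{k+1}-x^\star\|^2_{P_k}+\tfrac{\alpha_k\gamma_k}{2}\|y_{k+1}-x^\star\|^2_{P_k^{-1}G_{k+1}^2}.
\]
The first term equals $\tfrac12\|e\|^2_{P_k}-\alpha_k\langle g_{k+1},e\rangle+\tfrac{\alpha_k^2}{2}\|g_{k+1}\|^2_{P_k^{-1}}$. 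Taking conditional expectation and using (G1), the cross term $-\alpha_k\langle\nabla f(x_{k+1}),e\rangle$ cancels the one from the $f$-part. This is the key cancellation coming from discretizing $P$ directly.

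The second term is the genuinely Adam-type term. It is where the bound $\|y_{k+1}-x^\star\|_\infty\le R$ and the coupling $\gamma_kR^2=\alpha_k$ enter: it is at most $\tfrac{\alpha_k\gamma_kR^2}{2}\mathrm{Tr}(P_k^{-1}G_{k+1}^2)=\tfrac{\alpha_k^2}{2}\|g_{k+1}\|^2_{P_k^{-1}}$. The two quadratic terms then total $\alpha_k^2\|g_{k+1}\|^2_{P_k^{-1}}$, which is exactly absorbed by the descent term $-\alpha_k^2\mathbb E\|g_{k+1}\|^2_{P_k^{-1}}$ from Lemma~\ref{lem:stoch-descent}. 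The factor $2$ in $\eta_{k+1}$ was chosen for precisely this purpose.

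\textbf{Main obstacle.} The bookkeeping of the factor $(1+\alpha_k)$ is delicate. The lemma's descent term multiplies $f(x_{k+1})$ with weight $1$, while the Lyapunov recursion carries weight $(1+\alpha_k)$ on $f(x_{k+1}^+)$. So the descent inequality must be multiplied by $(1+\alpha_k)$, which produces $(1+\alpha_k)\alpha_k^2$ in front of the negative term. That is enough, since we only need $\alpha_k^2$. The cost is a noise term $2(1+\alpha_k)\alpha_k^2\sigma_0^2$, which after division gives $2\sigma_0^2\alpha_k^2$. Matching this to the displayed sum (indices $\tau=1,\dots,k+1$ with products through $k+1$) may need the convention $\alpha_{k+1}\approx\alpha_k$, or a relabeling. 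I would first try to reproduce the stated indices exactly, and otherwise accept an equivalent shifted form.
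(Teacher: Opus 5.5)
Your argument is the paper's proof. The paper proves the same one-step bound $\mathbb{E}[\mathcal{E}(z_{k+1}^+,P_{k+1})\mid\mathcal F_k]\le(1+\alpha_k)^{-1}\mathcal{E}(z_k^+,P_k)+2\alpha_k^2\sigma_0^2$ with the same ingredients:
\begin{itemize}
\item Lemma~\ref{lem:stoch-descent} used with total weight $1+\alpha_k$ (once in $\mathbb{I}_1$, once multiplied by $\alpha_k$);
\item the bound $\|y_{k+1}-x^\star\|_\infty\le R$ together with $\gamma_kR^2=\alpha_k$;
\item (G1) to cancel the cross term.
\end{itemize}
Your direct expansion of $\tfrac12\|y_{k+1}-x^\star\|_{P_k}^2$ is just Lemma~\ref{lem:I3-bound} written out; its Cauchy--Schwarz step is in fact an equality.

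The index mismatch you noticed is real, and you should not try to force it. Unrolling gives the noise term $2\sigma_0^2\sum_{\tau=0}^{k}\alpha_\tau^2\prod_{j=\tau+1}^{k}(1+\alpha_j)^{-1}$. The displayed sum differs from this in two ways:
\begin{itemize}
\item when $\alpha_k\equiv\alpha$ it is smaller by the factor $(1+\alpha)^{-1}$;
\item for $k=0$ it depends on $\alpha_1$, which does not enter $\mathcal{E}(z_1^+,P_1)$ at all, since the coupling fixes $\eta_1=2\alpha_0^2(1+\sigma_1^2)$.
\end{itemize}
So no convention such as $\alpha_{k+1}\approx\alpha_k$ turns one into the other. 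The paper's own proof stops at the same recursion (``iterating the recursion proves the theorem''), so the unrolled form is what is actually established.

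Also, unrolling by the tower property lets you pull $\prod_\tau(1+\alpha_\tau)^{-1}$ outside the expectation only if the $\alpha_\tau$ are deterministic, or after replacing them by deterministic envelopes as in Corollary~\ref{cor:two-phase-stoch}. The reason is that $\alpha_k$ depends on $P_k$, hence on $g_k$, so it is not $\mathcal F_{k-1}$-measurable.
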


The admissibility condition~\eqref{eq:alpha-admissible} for $\alpha_k$ can always be enforced by using the conservative lower bound $\lambda_{\min}(P_k)$ of $q_k$, but this is often too restrictive. Under Assumption~\ref{assump:alignment}, Lemma~\ref{lem:trace-lower-bound} gives a sharper computable lower bound
\(
q_k
\ge
\mathrm{Tr}(P_k^{-1})/\mathrm{Tr}(P_k^{-2}),
\)
which motivates the practical learning-rate rule~\eqref{equ:eta choice}. Combining~\eqref{equ:eta choice} with the coupling
\(\eta_{k+1}=2\alpha_k^2(1+\sigma_1^2)\) yields
\begin{equation}\label{eq:alpha-trace}
	\alpha_k
	=
	\frac{\lambda}{1+\sigma_1^2}
	\sqrt{
		\frac{1}{2L}
		\frac{\mathrm{Tr}(P_k^{-1})}{\mathrm{Tr}(P_k^{-2})}
	},
	\qquad \lambda\in(0,1].
\end{equation}
The trace-ratio formula \eqref{eq:alpha-trace} is the practical stepsize used in the experiments. Ignoring the constant prefactor, it induces stepsize decay through the evolution of $P_k$. Thus the method does not impose a prescribed schedule such as $\alpha_k\asymp 1/k$ or $\alpha_k\asymp 1/\sqrt{k}$; instead, the stepsize adapts to the current preconditioner, remaining larger while the metric is evolving and decreasing as the dynamics stabilize.

\begin{remark}
The boundedness assumption $\sup_{k\ge0}\|y_k-x^\star\|_\infty\le R$ is used only to control the diagonal feedback term. Such boundedness conditions are common in adaptive-method analyses. It can also be enforced by projecting or clipping $y_k$ onto a large box; this projection does not increase the Lyapunov term, so the result remains valid; see Appendix~\ref{app:proof-main-stoch}.
\end{remark}

\begin{remark}
Existing convergence analyses often impose monotonicity on the second-moment sequence, which is not guaranteed by the EMA update. Our analysis avoids this global condition. The coordinatewise alignment in Assumption~\ref{assump:alignment} is used only to motivate a computable trace-ratio surrogate for the unknown ratio $q_k$ in the admissible learning rate. This condition is sufficient but not necessary: even when Assumption~\ref{assump:alignment} fails, the admissibility condition~\eqref{equ:Q_kinlemma} may still hold.
\end{remark}

We outline the proof of Theorem~\ref{theorem:discrete stochastic} here and defer the full details to Appendix~\ref{app:proof-main-stoch}.
Write
\(
\mathbb{E}\!\left[\mathcal{E}(z_{k+1}^+,P_{k+1})-\mathcal{E}(z_k^+,P_k)\right]
= \mathbb{I}_1+\mathbb{I}_2+\mathbb{I}_3,
\)
where the three terms are given below.

\noindent \textbf{Bound on $\mathbb{I}_1 : = \mathbb{E}\!\left[ \mathcal{E}(z_{k+1}^+,P_{k+1}) - \mathcal{E}(z_{k+1},P_{k+1}) \right]$.} 
Recall that $\eta_{k+1} = 2 \alpha_k^2 (1+\sigma_1^2)$ will satisfy the admissible condition \eqref{equ:Q_kinlemma}. 
Applying Lemma~\ref{lem:stoch-descent} at $x_{k+1}$ gives
\begin{equation}\label{eq:I1-bound}
	\mathbb{I}_1 = \mathbb{E}\!\left[ f(x_{k+1}^+) - f(x_{k+1}) \right]
	\le
	-\frac{\eta_{k+1} }{2(1+\sigma_1^2)}\mathbb{E}\!\left[ \|g_{k+1}\|_{P_k^{-1}}^2	\right]
	+\frac{\eta_{k+1} \sigma_0^2}{1+\sigma_1^2}.
\end{equation}

\noindent \textbf{Bound on $\mathbb{I}_2 : = \mathbb{E}\!\left[ \mathcal{E}(z_{k+1},P_{k+1}) - \mathcal{E}(z_{k+1},P_{k}) \right]$.}
Using the update \eqref{eq:AdamSHANG-c},
\begin{equation}\label{eq:I2-bound}
	\begin{aligned}
		\mathbb{I}_2
		=
		\mathbb{E}\!\left[
		\frac{1}{2}\|y_{k+1}-x^\star\|^2_{P_{k+1}-P_k}
		\right]
		=
		\mathbb{E}\!\left[
		-\frac{\alpha_k}{2}\|y_{k+1}-x^\star\|^2_{P_{k+1}}
		+\frac{\alpha_k\gamma_{k}}{2}\|y_{k+1}-x^\star\|^2_{P_k^{-1} G_{k+1}^2}
		\right].
	\end{aligned}
\end{equation}  
The correction term in the $x$-update contributes 
$
- \|g_{k+1}\|_{P_k^{-1}}^2 = - \mathrm{Tr}(P_k^{-1}G_{k+1}^2).
$
Since both $P_k$ and $G_{k+1}^2$ are diagonal, this term has the same coordinatewise structure as the term $\frac{\alpha_k\gamma_{k}}{2}\|y_{k+1}-x^\star\|^2_{P_k^{-1} G_{k+1}^2}$ in \eqref{eq:I2-bound} and thus can be canceled with a suitable choice of $\beta$ and $\gamma$.

\noindent \textbf{Bound on $\mathbb{I}_3 : = \mathbb{E}\!\left[ \mathcal{E}(z_{k+1},P_{k}) - \mathcal{E}(z_{k}^+,P_{k}) \right]$.} Applying Lemma~\ref{lem:I3-bound}, we obtain
\begin{equation}\label{eq:I3-bound1}
	\mathbb{I}_3
	\le	
	-\alpha_k \big( f(x_{k+1}) - f(x^\star)\big)
	+\frac{\alpha_k^2}{2}\mathbb{E}\!\left[\|g_{k+1}\|_{P_k^{-1}}^2
	\right].
\end{equation}
Combining the bounds on $\mathbb{I}_1$, $\mathbb{I}_2$, and $\mathbb{I}_3$ with an appropriate choice of parameters yields Theorem~\ref{theorem:discrete stochastic}.

\paragraph{Adaptive Recursion and Decay Analysis.}
The next corollary summarizes the decay profiles under two representative envelopes for the adaptive stepsizes; the proof is deferred to Appendix~\ref{app:two-phase-stoch}.

\begin{corollary}[Two-phase behavior]
	\label{cor:two-phase-stoch}
	Assume the hypotheses of Theorem~\ref{theorem:discrete stochastic}.
	
	\smallskip\noindent
	(i) If there exist constants $a_+\ge a_->1$ and $b>0$ such that
	\(
	\frac{a_-}{k+b}\le \alpha_k \le \frac{a_+}{k+b}
	\)
	a.s. for all \(k\ge0\), then
	\[
	\mathbb{E}\!\left[\mathcal{E}(z_{k+1}^+,P_{k+1})\right]
	\lesssim
	(k+b)^{-a_-}\,\mathbb{E}\!\left[\mathcal{E}(z_0^+,P_0)\right]
	+\sigma_0^2 (k+b)^{-1}.
	\]
	
	\smallskip\noindent
	(ii) If there exist constants $\overline{\alpha}\ge \underline{\alpha}>0$ such that
	\(
	\underline{\alpha}\le \alpha_k \le \overline{\alpha}
	\)
	a.s. for all \(k\ge0\), then
	\[
	\mathbb{E}\!\left[\mathcal{E}(z_{k+1}^+,P_{k+1})\right]
	\lesssim
	(1+\underline{\alpha})^{-(k+1)}\,\mathbb{E}\!\left[\mathcal{E}(z_0^+,P_0)\right]
	+\sigma_0^2\,\frac{\overline{\alpha}^2}{\underline{\alpha}}.
	\]
\end{corollary}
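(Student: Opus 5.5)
The plan is to unroll the one-step recursion behind Theorem~\ref{theorem:discrete stochastic} and then estimate the two resulting products and sums under each stepsize envelope. Write $E_k:=\mathbb{E}[\mathcal{E}(z_k^+,P_k)]$. The proof of the theorem yields the one-step inequality
\[
(1+\alpha_k)E_{k+1}\le E_k + 2\sigma_0^2\alpha_{k+1}^2 ,
\]
whose telescoping gives the stated bound. Since the envelopes hold almost surely, I will bound the deterministic envelope products pathwise before taking expectations. The stepsizes are random, $\mathcal F_k$-measurable quantities, so the recursion must be used inside the expectation, with $\alpha$ replaced by its a.s. lower envelope in the contraction factor and by its upper envelope in the noise term. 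I will keep this bookkeeping light and treat the theorem's displayed bound as holding with these replacements. The task then reduces to bounding
\[
\Pi_{\tau,k}:=\prod_{j=\tau}^{k+1}(1+\alpha_j)^{-1}
\quad\text{and}\quad
S_k:=\sum_{\tau=1}^{k+1}\Pi_{\tau,k}\,\alpha_\tau^2 .
\]

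\textbf{Case (ii).} Here $\Pi_{0,k}\le(1+\underline\alpha)^{-(k+1)}$, which gives the first term. For the noise term,
\[
S_k\le \overline\alpha^2\sum_{m\ge1}(1+\underline\alpha)^{-m}=\overline\alpha^2/\underline\alpha,
\]
a geometric series. This case is immediate.

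\textbf{Case (i).} I will use $\log(1+x)\ge x-x^2/2$ for $x\in[0,1]$, or more crudely $\log(1+x)\ge x/(1+x)$, together with $\alpha_j\ge a_-/(j+b)$. This gives
\[
\sum_{j=\tau}^{k}\log(1+\alpha_j)\ge a_-\log\frac{k+b}{\tau+b}-C,
\]
where the constant $C$ comes from the summable $\sum_j\alpha_j^2$. Hence $\Pi_{\tau,k}\lesssim\big((\tau+b)/(k+b)\big)^{a_-}$, and at $\tau=0$ this gives the $(k+b)^{-a_-}$ term. For the noise,
\[
S_k\lesssim (k+b)^{-a_-}\sum_{\tau\le k+1}(\tau+b)^{a_- -2}a_+^2\lesssim (k+b)^{-a_-}(k+b)^{a_- -1},
\]
where the last step uses $a_->1$, so that $a_--2>-1$ and the sum grows like $(k+b)^{a_--1}$. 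This yields $\sigma_0^2(k+b)^{-1}$.

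The main obstacle is the constant in the logarithmic comparison when $\alpha_j$ is not small. For early indices $a_+/(j+b)$ may exceed $1$, so the quadratic term in the logarithm is not negligible. I would handle this by splitting at $j_0$ with $a_+/(j_0+b)\le1$, absorbing the finitely many early factors into the implicit constant, and using $\log(1+x)\ge x-x^2/2$ beyond $j_0$. The resulting constants depend only on $a_\pm$ and $b$, consistent with the $\lesssim$ notation.
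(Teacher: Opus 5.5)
Your proposal is correct, and its skeleton is the paper's: replace $\alpha_k$ by its lower envelope in the contraction factors and by its upper envelope in the noise weights, then bound a product and a weighted sum. Case (ii) is the same geometric series.

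The real difference is how you control the envelope product in case (i). The paper notes that $1+a_-/(\tau+b)=(\tau+b+a_-)/(\tau+b)$, so the product telescopes exactly to $\frac{\Gamma(b+a_-)}{\Gamma(b)}\cdot\frac{\Gamma(k+b+1)}{\Gamma(k+b+a_-+1)}$. Stirling then gives $(k+b)^{-a_-}$, and $((\tau+b)/(k+b))^{a_-}$ for the inner products. You instead bound $\sum_j\log(1+\alpha_j)$ from below by $a_-$ times a harmonic sum, minus a summable quadratic remainder. Your route avoids Gamma functions and gives explicit constants depending only on $a_\pm,b$. It also does not need the lower envelope to have exactly the rational form that makes the product telescope. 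The paper's route is exact for the envelope product and gives two-sided $\asymp$ estimates.

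Two simplifications. First, $\log(1+x)\ge x-x^2/2$ holds for all $x\ge0$: the difference vanishes at $0$ and has derivative $x^2/(1+x)\ge0$. So your split at $j_0$ is unnecessary. Second, it is cleaner to use monotonicity of $\log$ to pass to $x_j=a_-/(j+b)$ first, which gives $C=\tfrac{a_-^2}{2}\sum_{j\ge0}(j+b)^{-2}$.

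Also, your displayed one-step inequality is not well-formed as written. A random $\alpha_k$ multiplies an unconditional expectation, and the proof's noise weight is $\alpha_k^2$, not $\alpha_{k+1}^2$. The correct form is the conditional bound $\mathbb{E}[\mathcal{E}(z_{k+1}^+,P_{k+1})\mid\mathcal{F}_k]\le(1+\alpha_k)^{-1}\mathcal{E}(z_k^+,P_k)+2\sigma_0^2\alpha_k^2$. Since $\mathcal{E}\ge0$ and the envelopes are deterministic, you can substitute them there, take full expectations, and unroll a deterministic recursion. This is the bookkeeping you describe, and it is the clean way to pass from random to deterministic weights.
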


The contraction in Theorem~\ref{theorem:discrete stochastic} shows that the decay rate is governed by $\alpha_k$. When $\alpha_k\sim 2/k$, the contraction gives an accelerated polynomial rate. This is the SHANG regime, recovered when $\gamma=0$ and the adaptive feedback is removed, yielding the robust stochastic Nesterov-type rate $O(1/k^2)$ up to a noise level.

Adaptive feedback changes this stepsize dynamics. When gradients are large, the forcing term in the preconditioner update can sustain larger admissible $\alpha_k$. If $\alpha_k$ stays uniformly positive, the contraction becomes linear up to a noise neighborhood, as in Corollary~\ref{cor:two-phase-stoch}(ii). Thus the theory predicts two regimes: accelerated polynomial decay when $\alpha_k\sim 1/k$, and stronger transient contraction when feedback supports larger steps.

\paragraph{A synchronous version.}
In \eqref{eq:AdamSHANG}, $P_k$ is used to precondition $g_{k+1}$. One may update $P_{k+1}$ first and use it to precondition $g_{k+1}$: 
\begin{subequations}\label{eq:AdamSHANGs}
	\begin{align}
		\frac{x_{k+1}-x_k}{\alpha_k} &= y_k-x_{k+1} - \beta_k P_k^{-1} g_k, \label{eq:AdamSHANGs-a}\\
		\frac{P_{k+1}-P_k}{\tilde{\alpha}_k} &= -P_k+\gamma_k P_{k+1}^{-1}G_{k+1}^2, \label{eq:AdamSHANGs-c}\\
		\frac{y_{k+1}-y_k}{\tilde{\alpha}_k} &= -P_{k+1}^{-1}g_{k+1}, \label{eq:AdamSHANGs-b}
	\end{align}
\end{subequations}
with initial values $x_0$, $y_0$, and $P_0\succ 0$. Here $\alpha_k>0$ is the step size, $\tilde{\alpha}_k=\frac{\alpha_k}{1+\alpha_k}\le \alpha_k$, and $G_{k+1}^2=\mathrm{diag}\big(g_{k+1}.^2\big)$. 
We call this synchronous variant {\it Adam-SHANG-s}. The practical implementation is summarized in Algorithm~\ref{alg:adamshangs}.

More importantly, {\it Adam-SHANG-s} \eqref{eq:AdamSHANGs} admits an equivalent representation directly comparable to Adam. Let $P_k=\sqrt{V_k}$ and define $m_k$ by
\(
P_k^{-1}m_k=x_k-y_k.
\)
Then \eqref{eq:AdamSHANGs} can be rewritten as
\begin{equation}\label{eq:adamshangs-adam-form}
	\left\{
	\begin{aligned}
		x_{k+1}
		&=
		x_k-\eta_k(\sqrt{V_k})^{-1}\big(m_k+\beta_k g_k\big),\\
		m_{k+1}
		&=
		\theta_k P_{k+1}P_k^{-1}m_k
		+(1-\theta_k)g_{k+1}
		-(1-\theta_k)\beta_k P_{k+1}P_k^{-1}g_k,\\
		V_{k+1}
		&=
		\theta_k P_{k+1}P_k^{-1}V_k
		+(1-\theta_k)\gamma_k G_{k+1}^2,
	\end{aligned}
	\right.
\end{equation}
where $\eta_k=\frac{\alpha_k}{1+\alpha_k}$ and $\theta_k=\frac{1}{1+\alpha_k}$. Representation \eqref{eq:adamshangs-adam-form} makes the relation with Adam transparent. When $P_{k+1}P_k^{-1}\approx I$, setting $\beta_k=0$ and $\gamma_k=1$ yields an Adam-type update. We refer to Appendix~\ref{app:adamshangs-adam-form} for the derivation.

Unlike {\it Adam-SHANG}, the synchronous update depends on the new metric $P_{k+1}$. This makes the admissible stepsize condition more implicit and makes $\mathbb E[P_{k+1}^{-1}g_{k+1}]$ harder to control in a stochastic Lyapunov analysis. We therefore focus the theory on {\it Adam-SHANG} and use {\it Adam-SHANG-s} as an ablation baseline in the experiments. 
The comparison between \emph{Adam-SHANG-s} and Adam helps isolate the effect of the trace-ratio-driven adaptive stepsize within an Adam-style update, while the comparison between \emph{Adam-SHANG-s} and \emph{Adam-SHANG} highlights the effect of using the lagged preconditioner $P_k$ rather than the updated preconditioner $P_{k+1}$.

\section{Numerical Experiments}\label{sec: experiment}
We organize the experiments into three parts. 
The stochastic convex experiments test the theoretically coupled scheme, the predicted decay, and the admissibility of the trace-ratio rule. 
The counterexample is used as a controlled stress test for Adam-type instability. 
The deep learning experiments evaluate the practical nonconvex implementation with simplified parameters.

\paragraph{Convex optimization.}
Following the convex benchmark used in~\citet{gupta2024nesterovaccelerationdespitenoisy}, we consider
\[
f(x)=\sum_{i=1}^d f_d(x_i),\qquad
f_d(t)=
\begin{cases}
	|t|^d, & |t|\le 1,\\
	1+d(|t|-1), & \text{otherwise,}
\end{cases}
\]
with $d=16$, so that $f\in\mathcal{S}_L$ with $L=d(d-1)$. Since the baseline methods do not admit theoretical guarantees under our noise assumption~\eqref{eq:sg-metric-secondmoment}, we adopt a more general additive-multiplicative noise model for a fair comparison:
\(
g_i=(1+\sigma_1 Z_i)\partial_i f(x)+(\sigma_0/\sqrt{d})\,\xi_i,
\)
which satisfies the standard variance bound
\(
\mathbb{E}[\|g\|^2]\le \sigma_0^2+(1+\sigma_1^2)\|\nabla F\|^2.
\)
Here $Z_i,\xi_i\sim\mathcal N(0,1)$ are independent. 

\begin{figure}[!htbp]
	\centering
	\begin{subfigure}{0.32\textwidth}
		\centering
		\includegraphics[width=\linewidth]{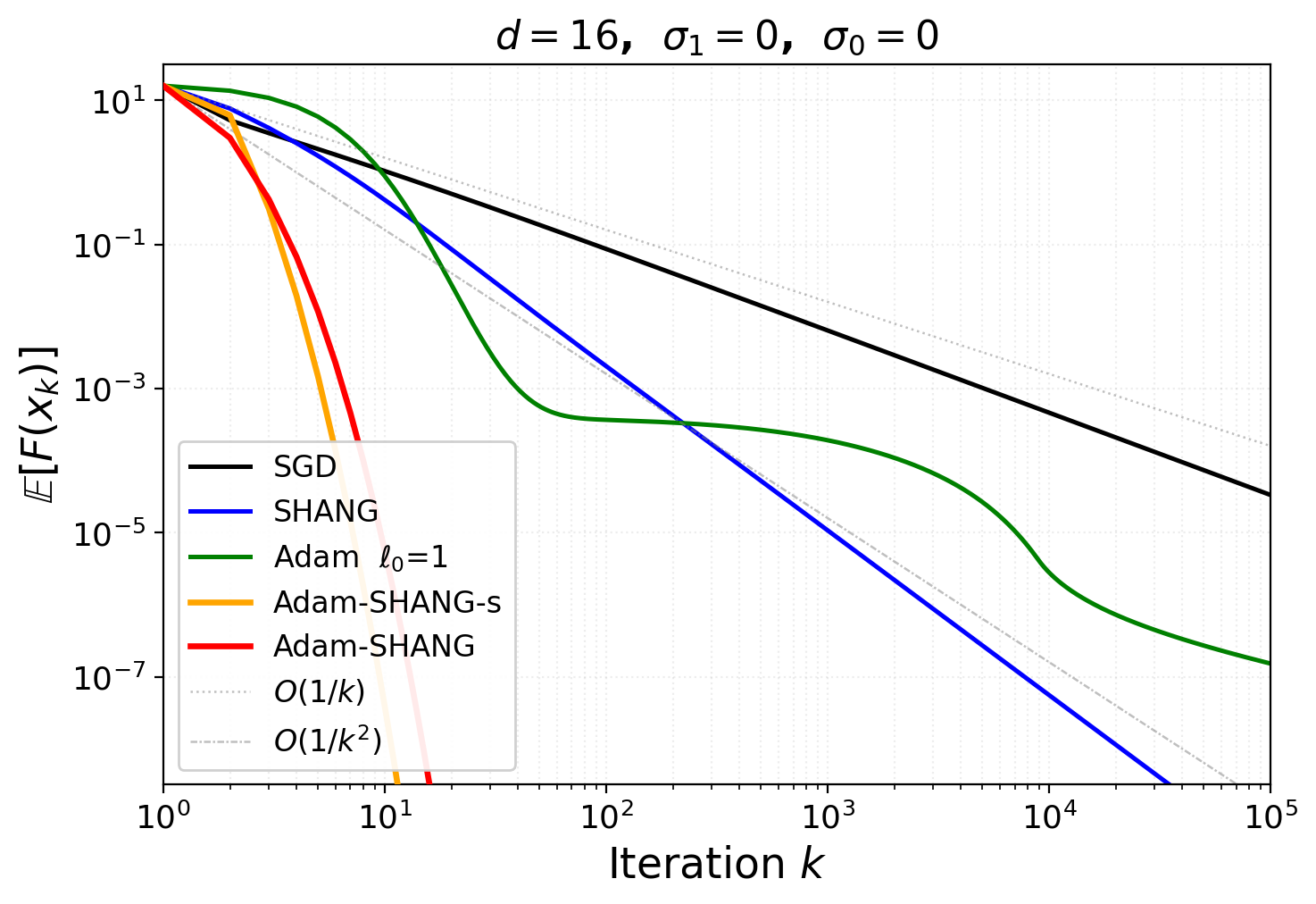}
	\end{subfigure}\hfill
	\begin{subfigure}{0.32\textwidth}
		\centering
		\includegraphics[width=\linewidth]{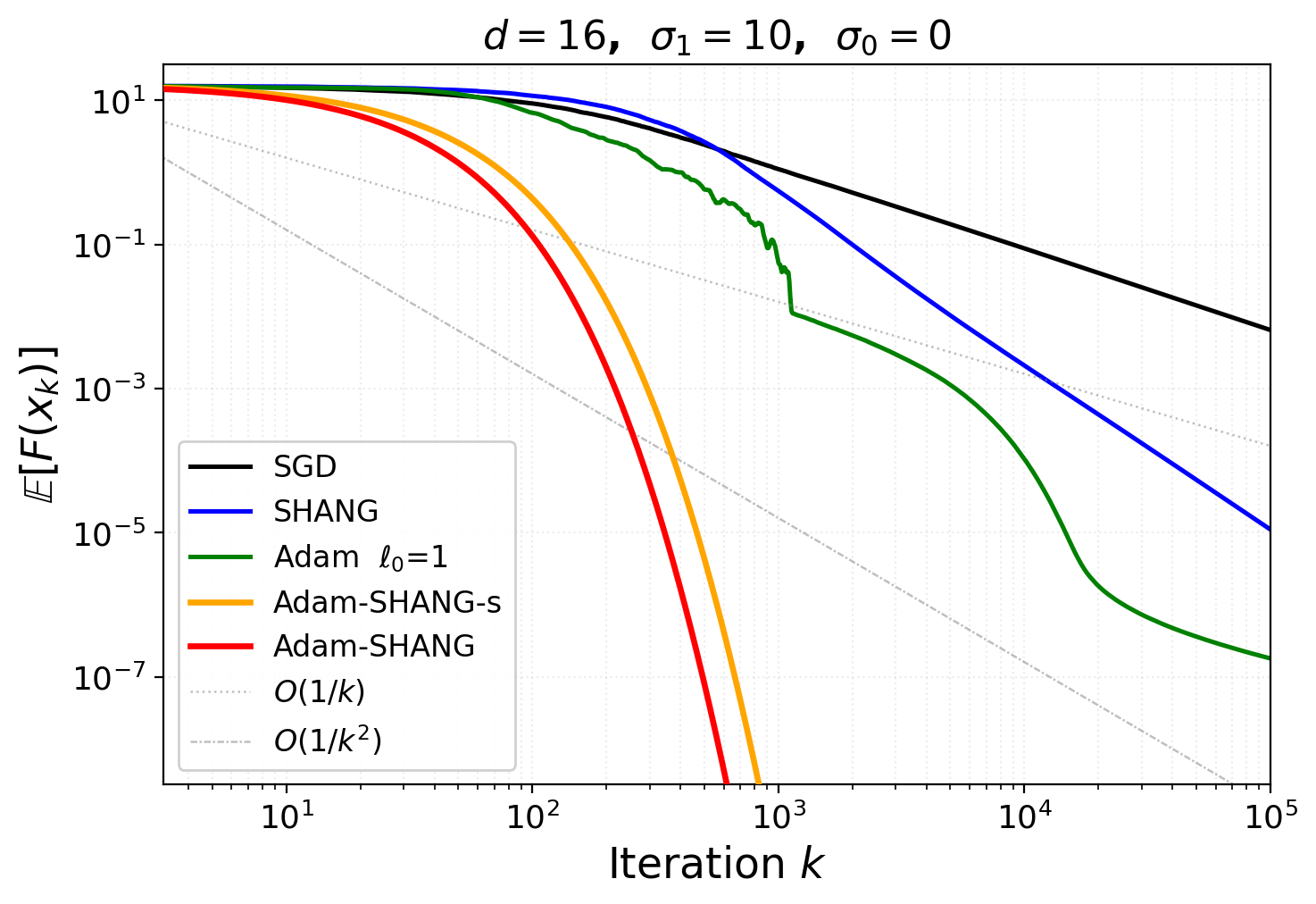}
	\end{subfigure}\hfill
	\begin{subfigure}{0.32\textwidth}
		\centering
		\includegraphics[width=\linewidth]{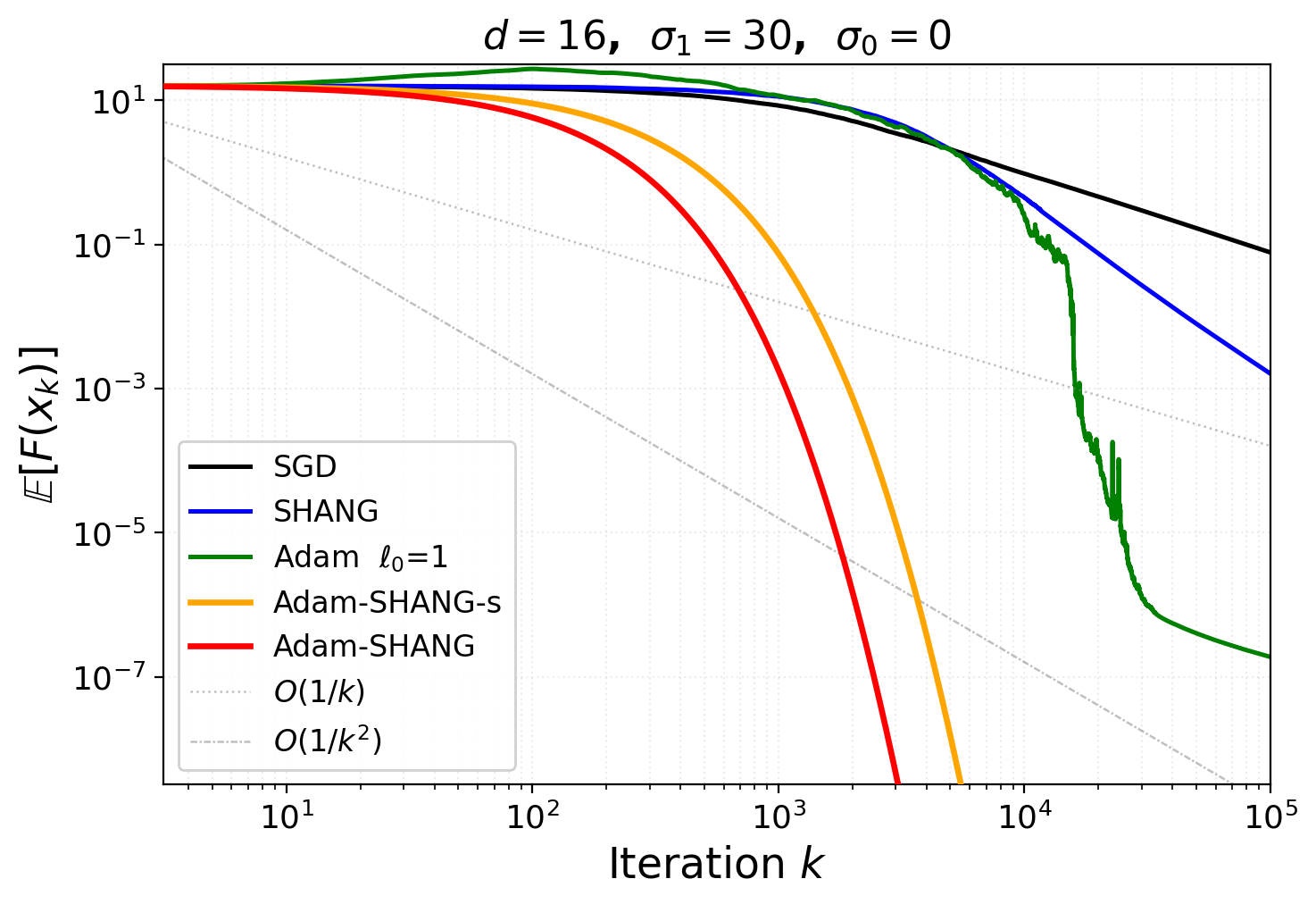}
	\end{subfigure}\hfill
	\centering
	\begin{subfigure}{0.32\textwidth}
		\centering
		\includegraphics[width=\linewidth]{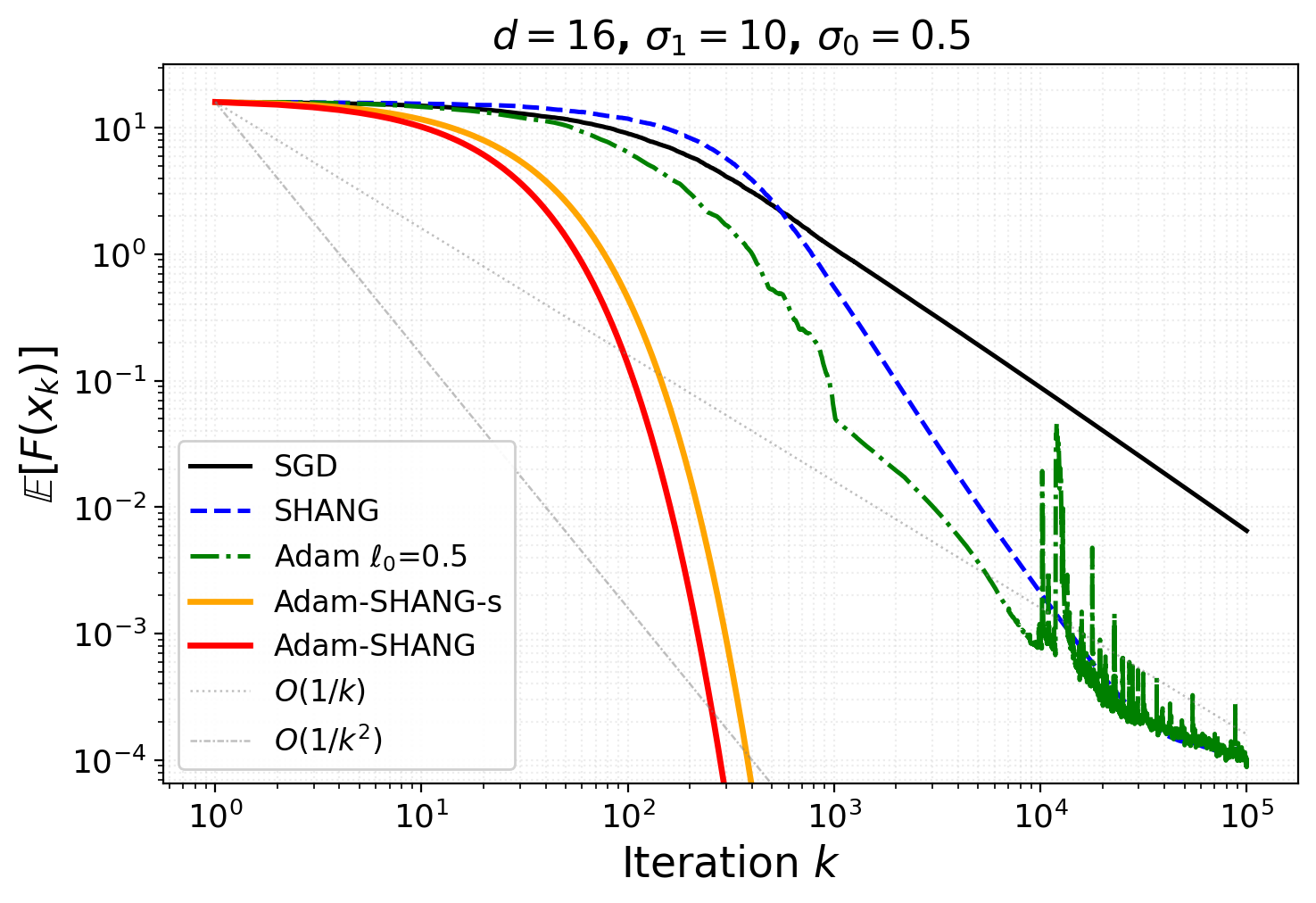}
	\end{subfigure}\hfill
	\begin{subfigure}{0.32\textwidth}
		\centering
		\includegraphics[width=\linewidth]{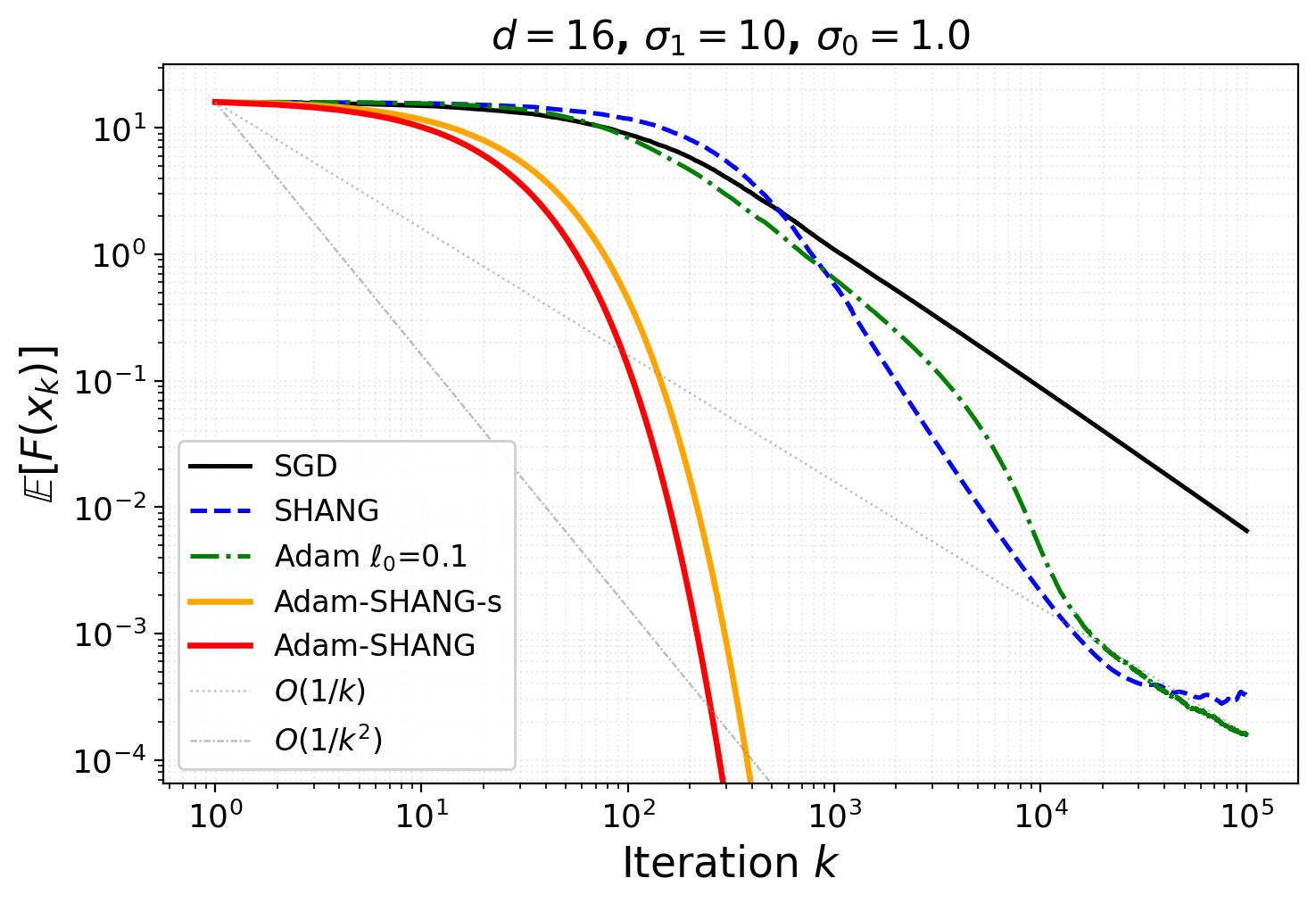}
	\end{subfigure}\hfill
	\begin{subfigure}{0.32\textwidth}
		\centering
		\includegraphics[width=\linewidth]{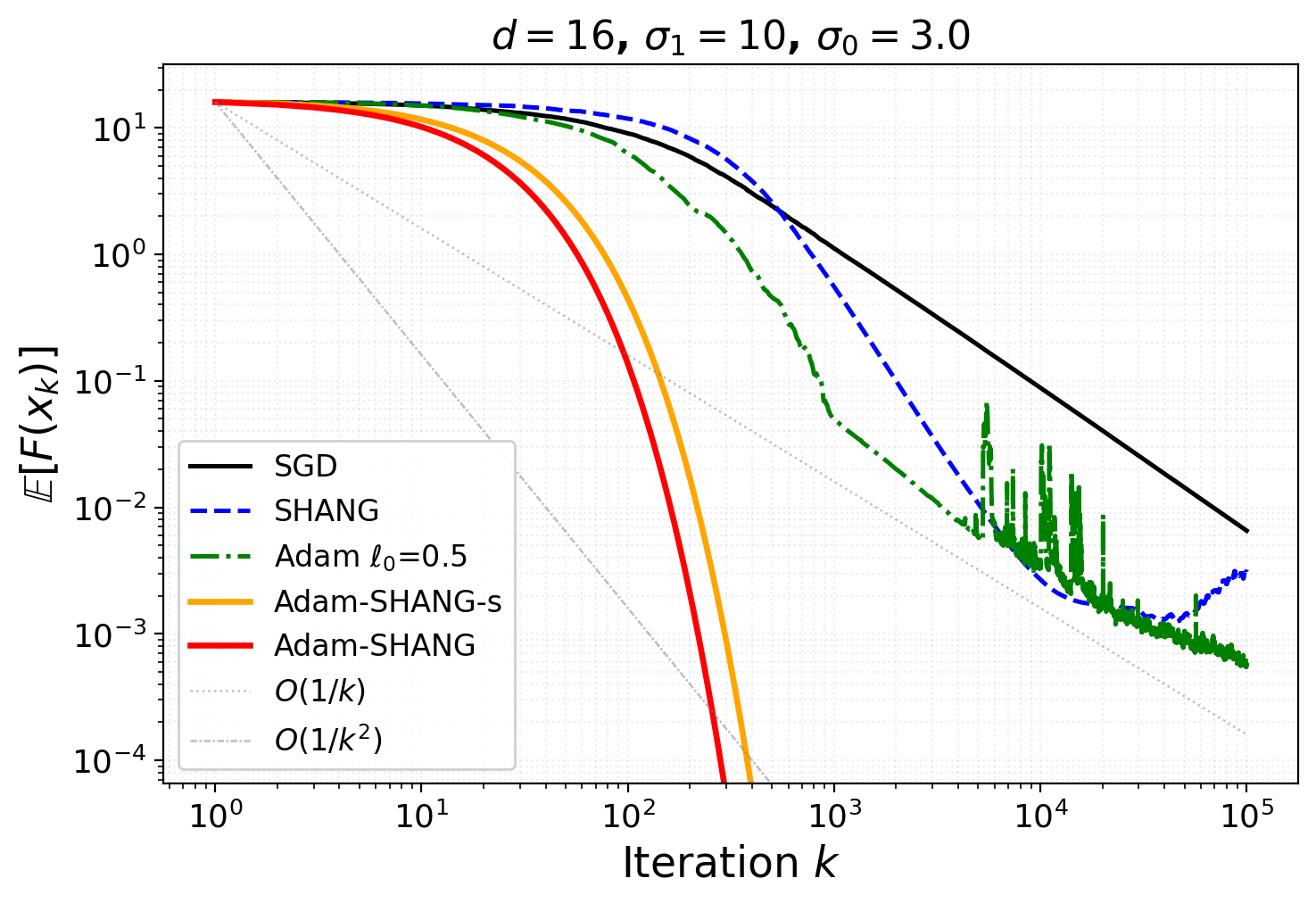}
	\end{subfigure}\hfill
	\caption{\small 
		Convex optimization benchmark with $d=16$. 
		\textbf{Top}: pure multiplicative noise with $\sigma_0=0$ and 
		$\sigma_1\in\{0,10,30\}$. 
		\textbf{Bottom}: additive-multiplicative noise with $\sigma_1=10$ and 
		$\sigma_0\in\{0.5,1,3\}$. 
		For Adam, we use the decaying stepsize $\ell_0/\sqrt{k+1}$, where $\ell_0$ is selected by grid search separately for each setting. 
		Full hyperparameter settings and implementation details are provided in Appendix~\ref{app:convex-example}.}
	\label{fig:convex result}
\end{figure}

Figure~\ref{fig:convex result} shows that, across all tested values of $\sigma_0$ and $\sigma_1$, both {\it Adam-SHANG} and {\it Adam-SHANG-s} decrease the objective faster than SGD, SHANG~\citep{yu2026shangrobuststochasticacceleration}, and Adam, with {\it Adam-SHANG} performing best. As the noise level increases, all methods slow down, but the ordering remains.

In this convex experiment, \emph{Adam-SHANG} uses the trace-ratio rule~\eqref{eq:alpha-trace} rather than the conservative spectral lower bound. Since Lemma~\ref{lem:stoch-descent} requires the admissibility condition~\eqref{equ:Q_kinlemma}, we check this condition directly \emph{a posteriori} along the computed trajectories. With the safety factor in the practical rule, we observe no violation of the monitored sufficient-decay condition in all tested settings; see Appendix~\ref{test:sufficient decay}. We also monitor the alignment condition~\eqref{assump:alignment}. Although it can fail during the initial transient phase, the admissibility condition still holds, confirming that the alignment condition is sufficient but not necessary.

\paragraph{Stress test: classical counterexample.}
We include the synthetic counterexample of \citet{reddi2019convergenceadam} as a controlled stress test. The domain is $x\in[-1,1]$, and the optimal solution is $x^\star=-1$. We consider both the deterministic sequence and the stochastic variant:
$$
f_t(x) =
\begin{cases}
1010x, & t \bmod 101 = 1,\\
-10x, & \text{otherwise,}
\end{cases}
\qquad
f_t(x) =
\begin{cases}
1010x, & \text{w.p. } 0.01,\\
-10x, & \text{otherwise.}
\end{cases}
$$
This benchmark is the canonical example of Adam's non-convergence and is used here only to test whether the proposed methods inherit the same pathology. Figure~\ref{fig:reddi} reports the iterate trajectories.

This benchmark is posed in the online setting, where the goal is to control the regret
$
R_T:=\sum_{t=1}^T f_t(x_t)-\min_x\sum_{t=1}^T f_t(x).
$
As shown by \citet{reddi2019convergenceadam}, low regret in this example does not imply convergence to the minimizer of the averaged objective. In both regimes, Adam diverges to $x=+1$, and {\it Adam-SHANG-s} shows similar instability. By contrast, {\it Adam-SHANG} converges to $x^\star=-1$ in both deterministic and stochastic settings and outperforms AMSGrad, especially in the stochastic case. This suggests that the feedback term $P_k^{-1}G_{k+1}^2$ provides a stabilization mechanism different from the monotone second-moment constraint of AMSGrad proposed in~ \citet{reddi2019convergenceadam}.

\begin{figure}[!htbp]
	\centering
	\begin{subfigure}{0.48\textwidth}
		\centering
		\includegraphics[width=0.8\linewidth]{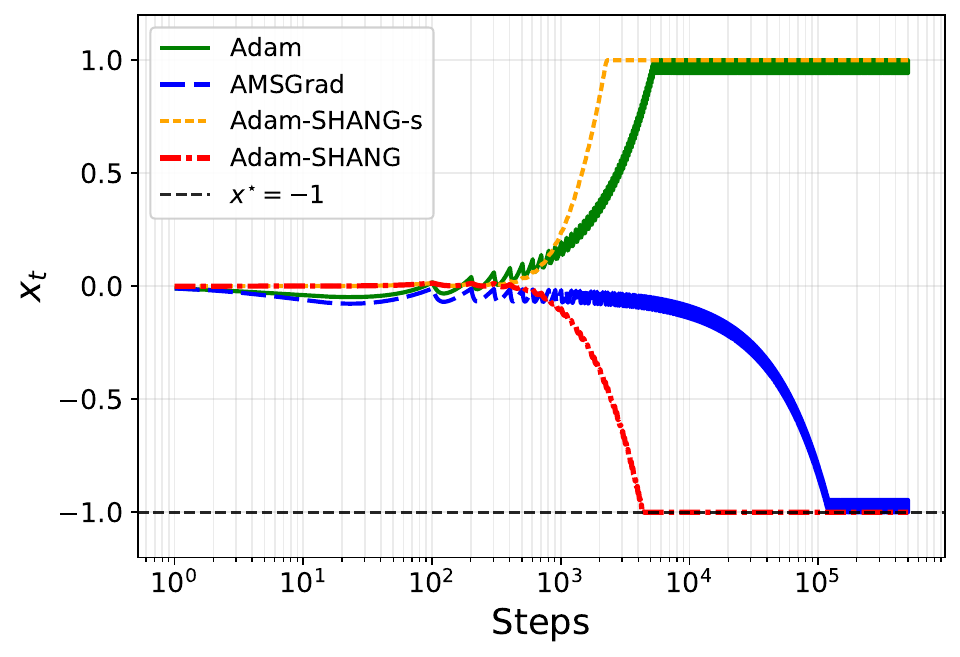}
	\end{subfigure}\hfill
	\begin{subfigure}{0.48\textwidth}
		\centering
		\includegraphics[width=0.8\linewidth]{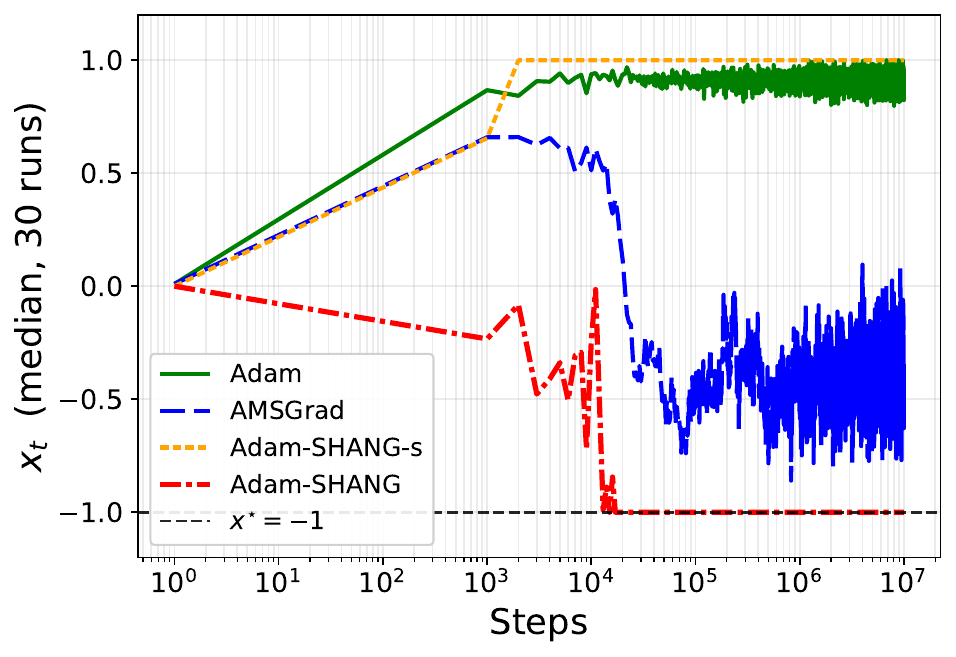}
	\end{subfigure}
	\caption{\small Iterate trajectories $x_t$ on the
		\citet{reddi2019convergenceadam} counterexample.
		Left: deterministic sequence. Right: stochastic sequence
		(median over 30 independent runs). Hyperparameter settings and full results, including average regret curves, are provided in  Appendix~\ref{app:reddi}.}
	\label{fig:reddi}
\end{figure}
\vspace{-6pt}

\paragraph{Deep Learning Tasks}

This section evaluates whether the proposed methods remain competitive on practical deep learning tasks and, in particular, whether the trace-ratio stepsize provides useful automatic scheduling. We compare {\it Adam-SHANG} in the form of Algorithm~\ref{alg:adamshang}, {\it Adam-SHANG-s} (Algorithm~\ref{alg:adamshangs}), SHANG++~\cite{yu2026shangrobuststochasticacceleration}, Adam~\cite{kingma2017adammethodstochasticoptimization}, and AdamW~\cite{loshchilov2019decoupledweightdecayregularization}. Each experiment is repeated with three independent random seeds, and the mean and standard deviation are reported.

\begin{figure}[!htbp]
	\centering
	\begin{subfigure}{0.48\textwidth}
		\centering
		\includegraphics[width=0.825\linewidth]{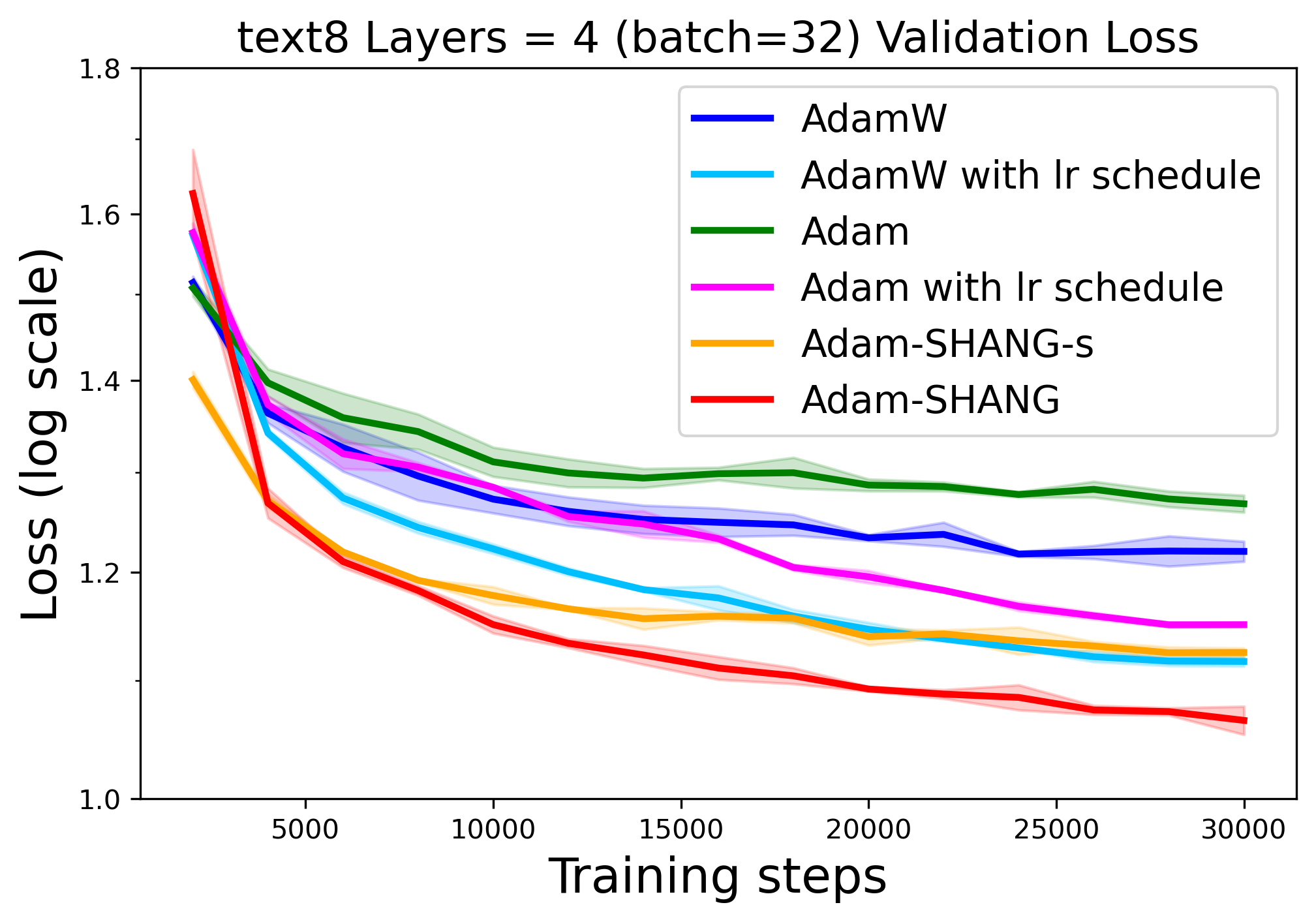}
	\end{subfigure}\hfill
	\begin{subfigure}{0.48\textwidth}
		\centering
		\includegraphics[width=0.825\linewidth]{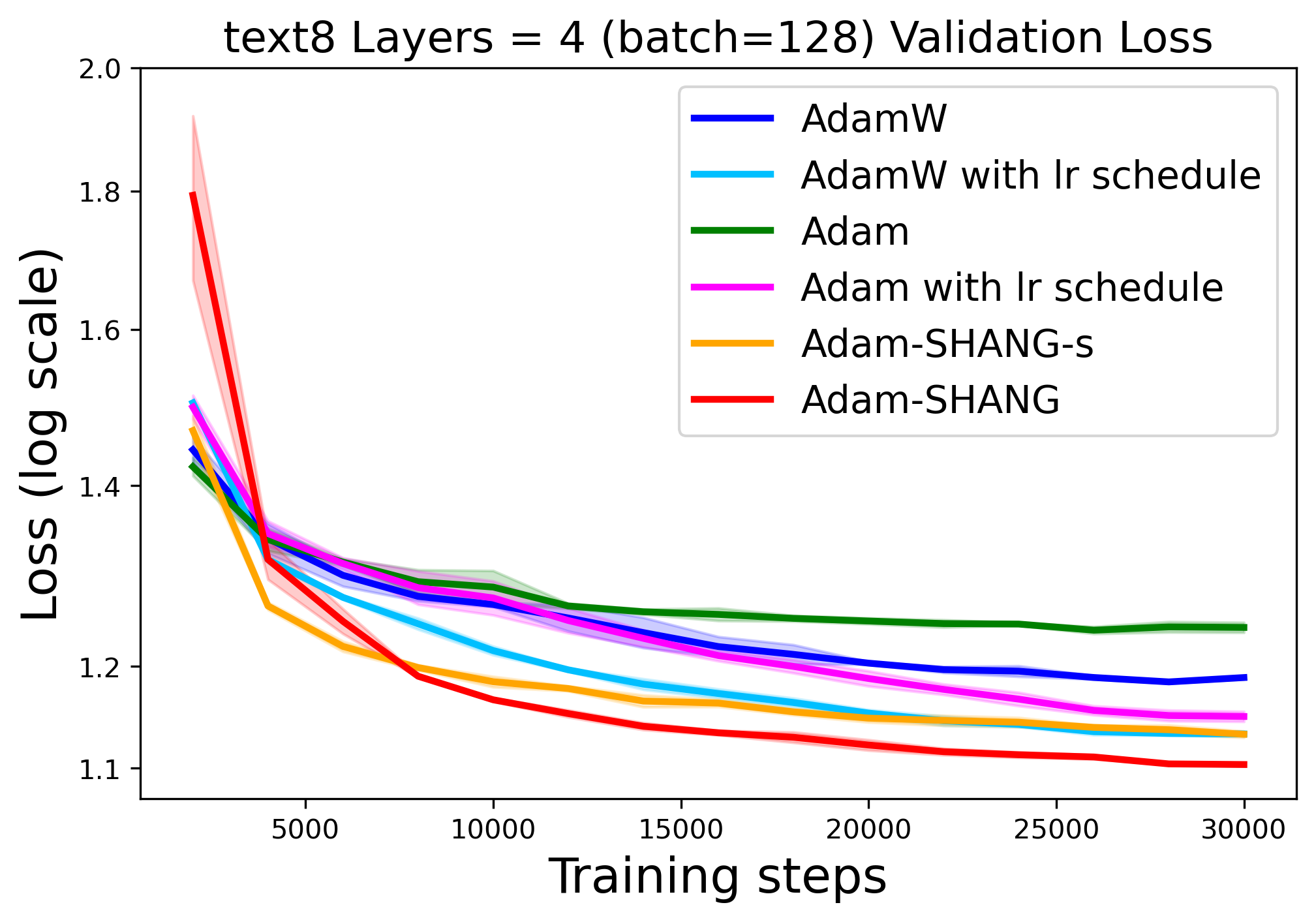}
	\end{subfigure}\hfill
	\caption{\small Validation loss for Transformer language modeling on \texttt{text8}. All methods are trained for $30{,}000$ steps under batch sizes 
		$B \in \{32, 128\}$ to examine sensitivity to gradient variance 
		and effective data throughput. Model architecture, data split, and hyperparameter settings are provided in Appendix~\ref{app:text8}.}
	\label{fig:transformer_text8}
\end{figure}
\vspace{-6pt}

\paragraph{Character-level language modeling on \texttt{text8}.}
We evaluate the proposed methods on character-level language modeling on \texttt{text8}~\cite{Mahoney2009}. Since SHANG++ is non-adaptive and is not competitive on this Transformer-based task, we exclude it from this comparison. Figure~\ref{fig:transformer_text8} compares all methods with and without a linear-warmup cosine-decay schedule. No external schedule is applied to {\it Adam-SHANG} or {\it Adam-SHANG-s}, since their stepsizes are determined by the trace-ratio rule and decay automatically through the adaptive metric. Both variants outperform the unscheduled baselines and remain competitive with the scheduled ones, with {\it Adam-SHANG} achieving the lowest validation loss for both batch sizes. The comparison with scheduled Adam and AdamW indicates that the trace-ratio rule provides effective learning-rate control without manual scheduling. The advantage is more pronounced at $B=32$, suggesting stronger benefits under higher gradient noise.

\paragraph{Image classification on CIFAR-100}\label{test:cifar100}
We further evaluate on image classification using ResNet-50~\cite{HeZhangRenSun2016} on CIFAR-100~\cite{Krizhevsky2009}. 
In contrast to the \texttt{text8} experiment, where scheduled Adam and AdamW are included because external learning-rate schedules have a pronounced effect on Transformer training, the CIFAR experiments are run under a common unscheduled protocol for all methods. This choice keeps the comparison focused on the intrinsic optimizer dynamics.

Figure~\ref{fig:cifar100_resnet} shows that both {\it Adam-SHANG} and {\it Adam-SHANG-s} achieve final test accuracies competitive with Adam and AdamW, while reaching high accuracy earlier. At batch size $32$, {\it Adam-SHANG} attains the baseline accuracy level in roughly half the number of steps. This suggests that the trace-ratio stepsize accelerates the transient optimization phase by extracting useful stepsize information from the evolving preconditioner. This is useful in staged training pipelines, where reaching a target accuracy with fewer steps can reduce the computational cost of each stage.

\begin{figure}[!htbp] 
	\centering
	\begin{subfigure}{0.48\textwidth}
		\centering
		\includegraphics[width=0.825\linewidth]{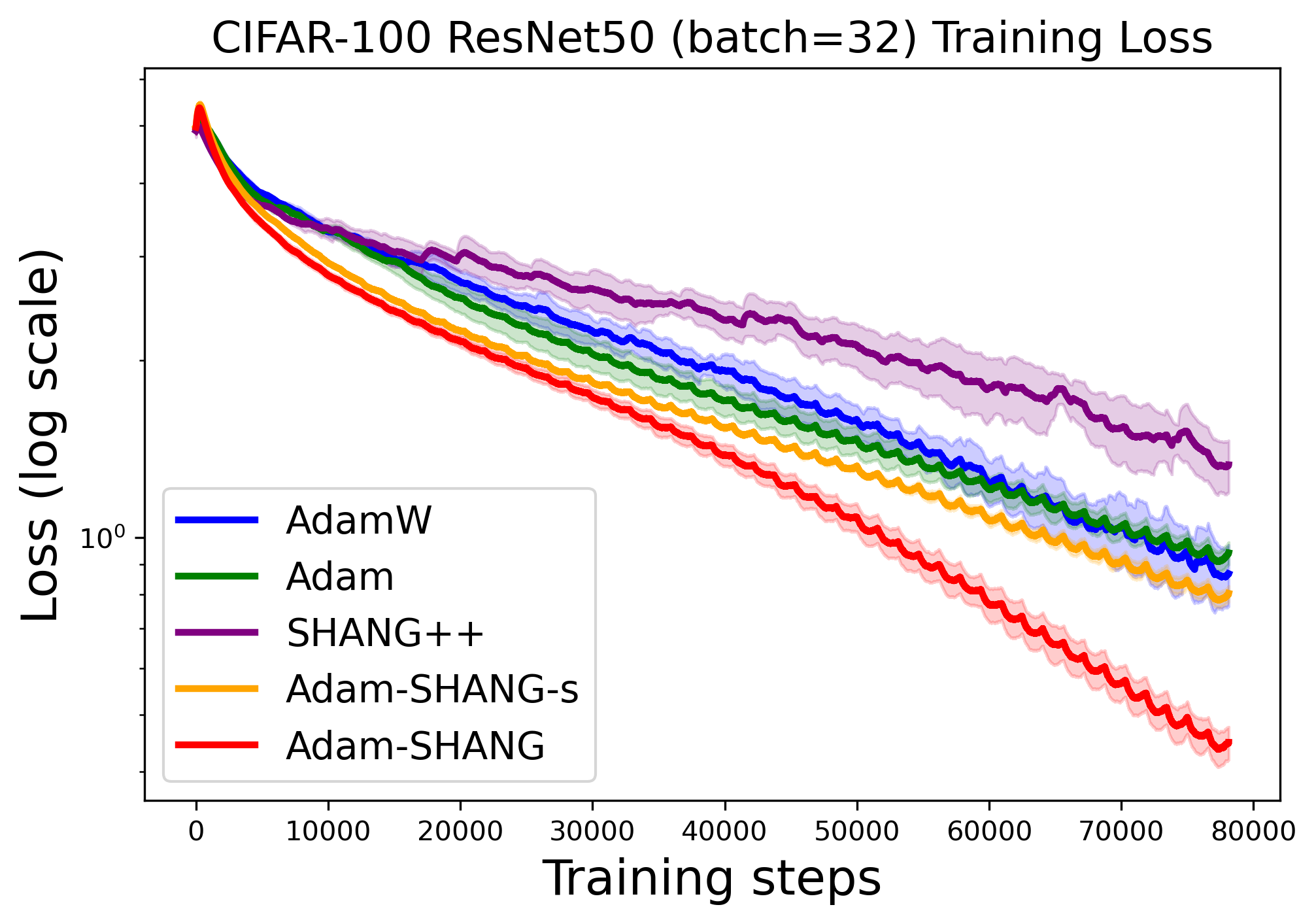}
	\end{subfigure}\hfill
	\begin{subfigure}{0.48\textwidth}
		\centering
		\includegraphics[width=0.825\linewidth]{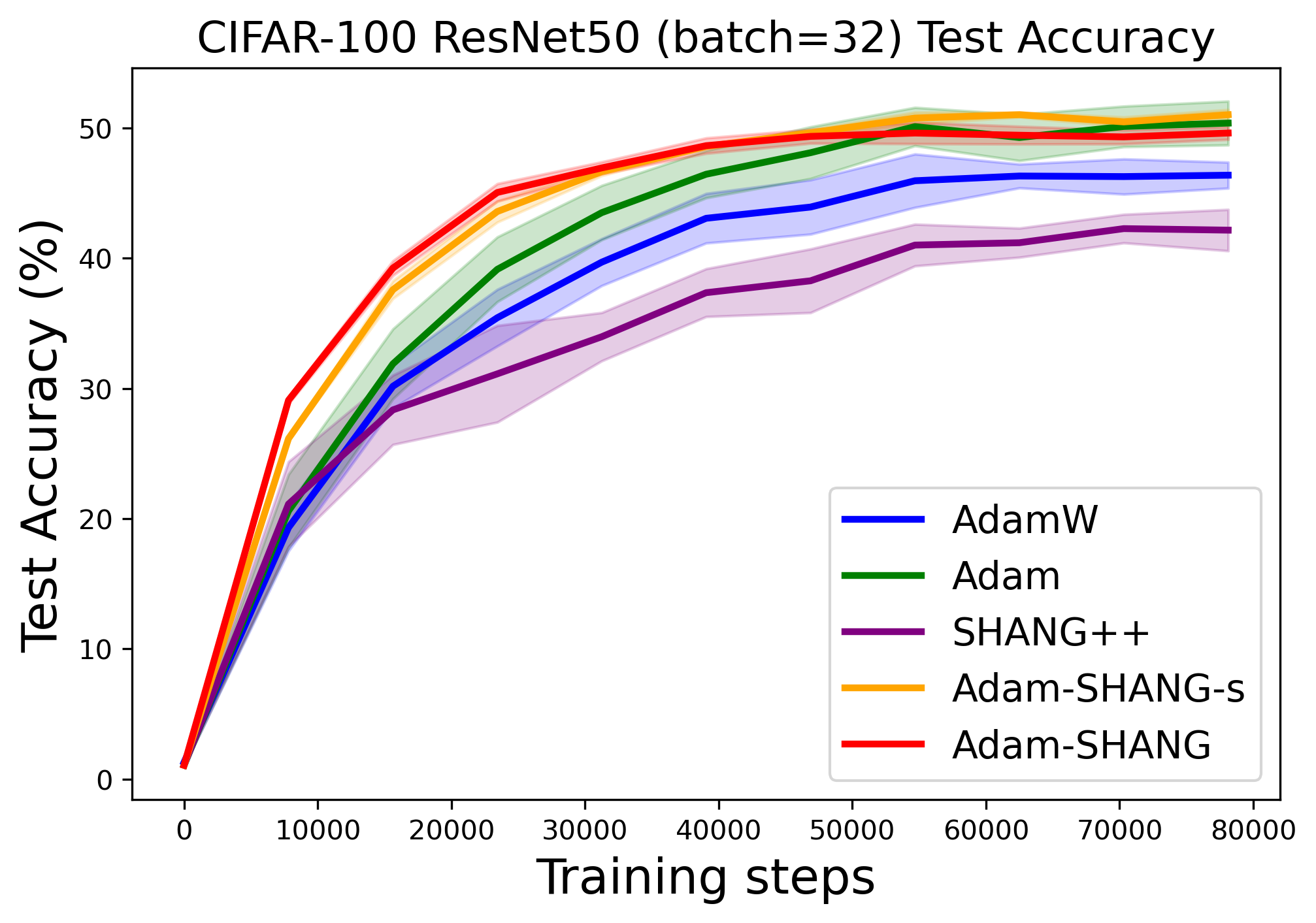}
	\end{subfigure}\hfill
	\begin{subfigure}{0.48\textwidth}
		\centering
		\includegraphics[width=0.825\linewidth]{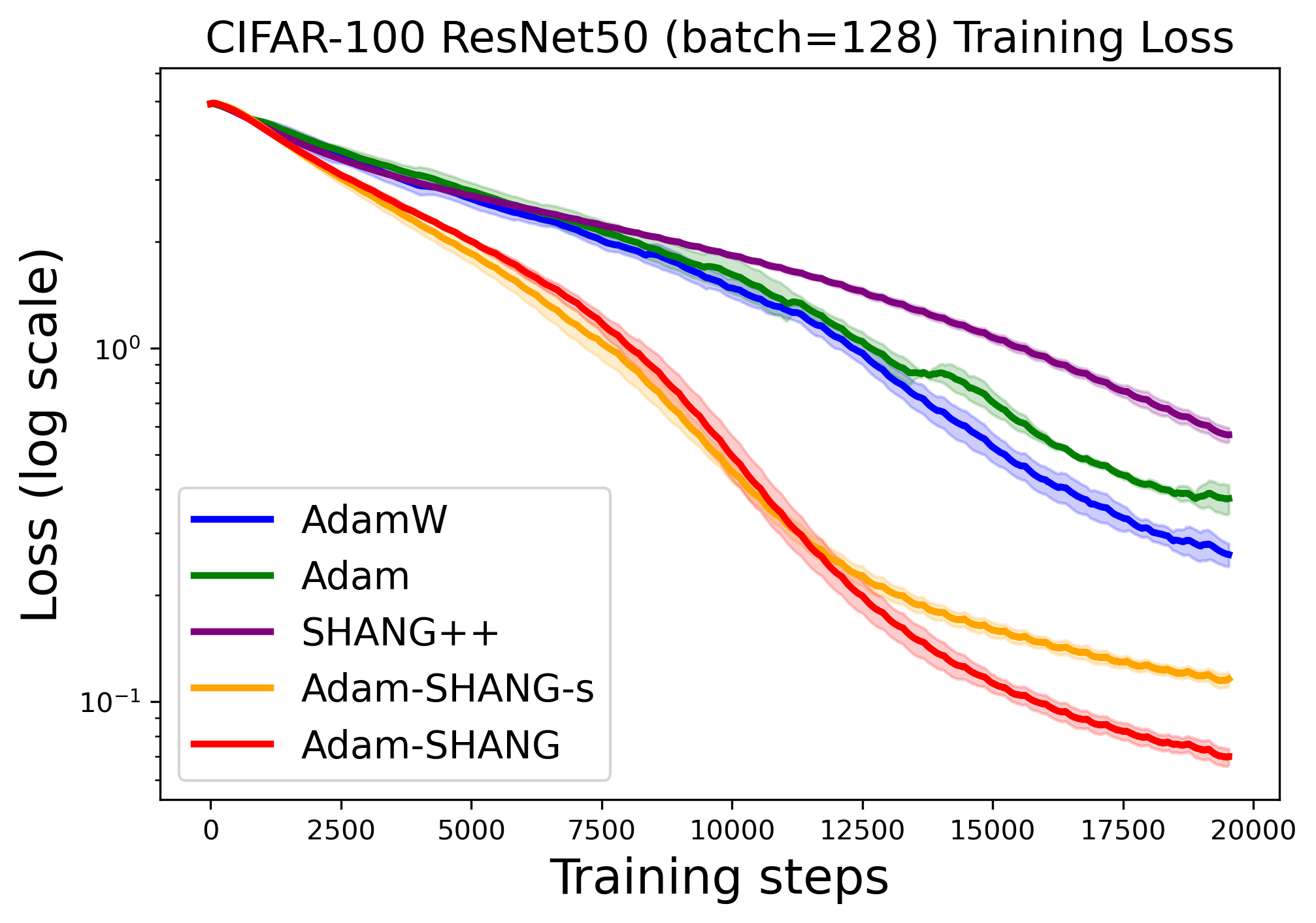}
	\end{subfigure}\hfill
	\begin{subfigure}{0.48\textwidth}
		\centering
		\includegraphics[width=0.825\linewidth]{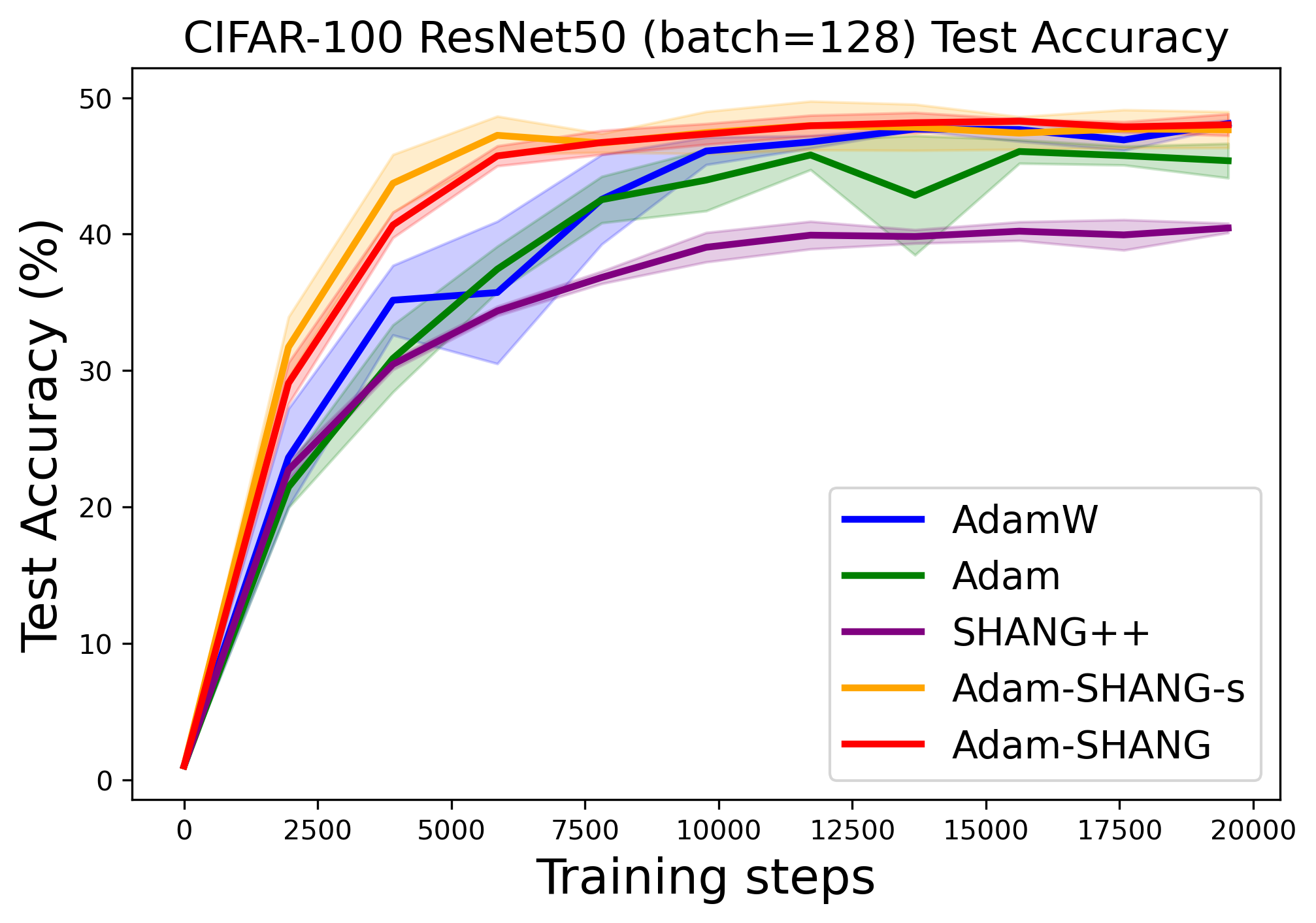}
	\end{subfigure}\hfill
	\caption{\small Training loss and test accuracy for training ResNet-50 on CIFAR-100. All models are trained for $50$ epochs under batch sizes $B \in \{32, 128\}$. Hyperparameter settings are provided in Appendix~\ref{app:cifar100}.}
	\label{fig:cifar100_resnet}
\end{figure}

This automatic decay is motivated by convex theory and is effective within the training horizons considered here. For longer nonconvex training, the trace-ratio rule could be combined with a lower bound on $\alpha_k$ and a later-stage schedule. This would preserve fast initial descent while keeping sufficiently large updates for fine-tuning. We leave this extension for future work.

We also include two supplementary evaluations in the appendix: CIFAR-10 classification with ResNet-34 (Appendix~\ref{test:cifar10}) and an ablation of the $\beta$-correction term (Appendix~\ref{test:ablation of correction}). The ablation uses both {\it Adam-SHANG} and {\it Adam-SHANG-s} with $(\beta=0,\gamma=1)$. The latter is the closest Adam-style counterpart and behaves similarly to Adam with learning rate changed due to the trace-ratio rule of $\alpha_k$. Comparing each full method with its $\beta=0$ variant shows that the correction term improves optimization, with a stronger effect for {\it Adam-SHANG}.

\section{Conclusion}
We proposed \emph{Adam-SHANG}, a Lyapunov-guided Adam-type method for stochastic smooth convex optimization. Its main algorithmic ingredient is a trace-ratio stepsize rule driven by the evolving preconditioner. Compared with Adam, the method combines an adaptive stepsize, a curvature-aware correction, and a splitting of momentum, leading to a scheme with both theoretical guarantees and improved training behavior. Compared with deterministic Adam-HNAG, the trace-ratio rule provides a computable stochastic surrogate for the unavailable full-gradient admissibility ratio. Compared with SHANG, the method adds adaptive feedback through the preconditioner update.

For stochastic smooth convex optimization, we proved convergence in expectation under an admissible stepsize condition, without imposing global monotonicity on the second-moment sequence. This condition can always be satisfied by a conservative spectral bound, while the trace-ratio rule gives a less conservative practical choice motivated by local coordinatewise alignment. Experiments on stochastic convex problems validate the predicted behavior, and deep learning experiments show competitive training performance against Adam and AdamW. Generalization, nonconvex theory, and weaker admissibility criteria remain important directions for future work.

\bibliographystyle{plainnat}
\bibliography{reference}


\appendix


\section{Convergence analysis of {\it Adam-SHANG}}

\subsection{Proof of Lemma~\ref{lem:stoch-descent}}
\label{app:proof-stoch-descent}

\begin{proof}[Proof of Lemma~\ref{lem:stoch-descent}]
	By $L$-smoothness and
	\(
	x_{k+1}^+-x_{k+1}
	=
	-\eta_{k+1}P_k^{-1}g_{k+1},
	\)
	we have
	\[
	\begin{aligned}
		f(x_{k+1}^+)
		\le\;&
		f(x_{k+1})
		-\eta_{k+1}
		\langle \nabla f(x_{k+1}),g_{k+1}\rangle_{P_k^{-1}}
		+\frac{L\eta_{k+1}^2}{2}\|g_{k+1}\|_{P_k^{-2}}^2.
	\end{aligned}
	\]
	Taking expectation and using that $\eta_{k+1}P_k^{-1}$ depends only on $x_{k+1}$ and the past randomness gives
	\[
	\begin{aligned}
		\mathbb{E}\!\left[f(x_{k+1}^+)\right]
		\le\;
		f(x_{k+1})
		-\eta_{k+1}
		\|\nabla f(x_{k+1})\|_{P_k^{-1}}^2
		+\frac{L\eta_{k+1}^2}{2}\,
		\mathbb{E}\!\left[\|g_{k+1}\|_{P_k^{-2}}^2\right].
	\end{aligned}
	\]
	By Assumption~\eqref{eq:sg-metric-secondmoment},
	\[
	\begin{aligned}
		\mathbb{E}\!\left[f(x_{k+1}^+)\right]
		\le\;
		f(x_{k+1})
		-\frac{\eta_{k+1}}{1+\sigma_1^2}
		\mathbb{E}\!\left[\|g_{k+1}\|_{P_k^{-1}}^2\right]
		+\frac{L\eta_{k+1}^2}{2}
		\mathbb{E}\!\left[\|g_{k+1}\|_{P_k^{-2}}^2\right]
		+\frac{\eta_{k+1}\sigma_0^2}{1+\sigma_1^2}.
	\end{aligned}
	\]
	Finally, substituting the step-size condition into the above inequality yields \eqref{eq:stoch_descent}.
\end{proof}

\subsection{Lemma \ref{lem:I3-bound}}
\label{app:proof-I3-stoch}

With the notation of $x_k^+ = x_k - \alpha_k \beta_k P_{k-1}^{-1} g_k$,  {\it Adam-SHANG} \eqref{eq:AdamSHANG} can be rewritten as
\begin{subequations}\label{eq:AdamSHANG2}
	\begin{align}
		\frac{x_{k+1}-x_k^+}{\alpha_k} &= y_k - x_{k+1}, \label{eq:AdamSHANG2-a}\\
		\frac{y_{k+1}-y_k}{\alpha_k} &= -P_k^{-1} g_{k+1}, \label{eq:AdamSHANG2-b}\\
		\frac{P_{k+1}-P_k}{\alpha_k} &= - P_{k+1} + \gamma_{k} P_k^{-1} G_{k+1}^2. \label{eq:AdamSHANG2-c}
	\end{align}
\end{subequations}
The point $x_k^+$ is only used in the analysis and does not change the algorithm. 

Equivalently, the scheme~\eqref{eq:AdamSHANG} can also be written in the form \eqref{eq:AdamSHANG3}.
\begin{subequations}\label{eq:AdamSHANG3}
	\begin{align}
		x_{k+1} & = \frac{1}{1+\alpha_k} x_k^+ + \frac{\alpha_k}{1+\alpha_k} y_k , \label{eq:AdamSHANG3-a}\\
		y_{k+1} & = y_k - \alpha_k P_k^{-1} g_{k+1}, \label{eq:AdamSHANG3-b}\\
		x_{k+1}^+ & = x_{k+1} - \alpha_{k+1} \beta_{k+1} P_k^{-1} g_{k+1}, \label{eq:AdamSHANG3-c}\\
		P_{k+1} & = \frac{1}{1+\alpha_k}  P_k + \frac{\alpha_k \gamma_k}{1+\alpha_k} P_k^{-1} G_{k+1}^2. \label{eq:AdamSHANG3-d}
	\end{align}
\end{subequations}

We now state the lemma.
\begin{lemma}\label{lem:I3-bound}
	Let $f\in\mathcal{S}_L$. Assume $\alpha_k$ is $\mathcal F_k$-measurable. Then
	\begin{equation}\label{eq:I3-bound}
		\mathbb{I}_3
		\le
		\mathbb{E}\!\left[
		-\alpha_k \big( f(x_{k+1}) - f(x^\star)\big)
		+\frac{\alpha_k^2}{2}\|g_{k+1}\|_{P_k^{-1}}^2
		\right].
	\end{equation}
\end{lemma}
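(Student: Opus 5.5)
We must bound $\mathbb{I}_3=\mathbb{E}[\mathcal{E}(z_{k+1},P_k)-\mathcal{E}(z_k^+,P_k)]$, where
\[
\mathcal{E}(z_{k+1},P_k)-\mathcal{E}(z_k^+,P_k)=f(x_{k+1})-f(x_k^+)+\tfrac12\|y_{k+1}-x^\star\|_{P_k}^2-\tfrac12\|y_k-x^\star\|_{P_k}^2 .
\]
We treat the function part and the metric part separately, conditioning on $\mathcal F_k$. Then $x_{k+1}$, $P_k$, $\alpha_k$ are deterministic and only $g_{k+1}$, hence $y_{k+1}$, is random.

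\textbf{Function part.} By convexity, $f(x_{k+1})-f(x_k^+)\le\langle\nabla f(x_{k+1}),x_{k+1}-x_k^+\rangle$. By \eqref{eq:AdamSHANG2-a}, $x_{k+1}-x_k^+=\alpha_k(y_k-x_{k+1})$, so
\[
f(x_{k+1})-f(x_k^+)\le \alpha_k\langle\nabla f(x_{k+1}),y_k-x_{k+1}\rangle .
\]

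\textbf{Metric part.} We expand the squared norm with \eqref{eq:AdamSHANG2-b}, $y_{k+1}-y_k=-\alpha_kP_k^{-1}g_{k+1}$:
\[
\tfrac12\|y_{k+1}-x^\star\|_{P_k}^2-\tfrac12\|y_k-x^\star\|_{P_k}^2=-\alpha_k\langle g_{k+1},y_k-x^\star\rangle+\tfrac{\alpha_k^2}{2}\|g_{k+1}\|_{P_k^{-1}}^2 .
\]
The factor $P_k$ in the inner product cancels against $P_k^{-1}$, which is exactly why the metric $P_k$, not $P_{k+1}$, is used here. Since $y_k$ and $P_k$ are $\mathcal F_k$-measurable, (G1) gives $\mathbb{E}[\langle g_{k+1},y_k-x^\star\rangle]=\langle\nabla f(x_{k+1}),y_k-x^\star\rangle$.

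\textbf{Combining.} Adding the two parts, the $y_k$ terms cancel and we are left with
\[
\alpha_k\langle\nabla f(x_{k+1}),x^\star-x_{k+1}\rangle+\tfrac{\alpha_k^2}{2}\mathbb{E}\|g_{k+1}\|_{P_k^{-1}}^2 .
\]
Convexity bounds the inner product by $-(f(x_{k+1})-f(x^\star))$, which yields \eqref{eq:I3-bound}. The tower property then extends the bound to unconditional expectations.

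\textbf{Main obstacle.} The one step needing care is the measurability bookkeeping. The filtration is defined so that $x_{k+1}$ and $P_k$ are $\mathcal F_k$-measurable, but we also need $y_k$ (and $\alpha_k$, which is assumed) to be $\mathcal F_k$-measurable. I would justify this from the recursion: $y_k$ is built from $g_1,\dots,g_k$, which were revealed before $x_{k+1}$ was formed. This is what allows the cross term $\langle g_{k+1},y_k-x^\star\rangle$ to be replaced by its mean. The remaining steps are algebra.

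Note that the bound keeps the full second moment $\frac{\alpha_k^2}{2}\mathbb{E}\|g_{k+1}\|_{P_k^{-1}}^2$ rather than splitting it via (G2). This is intentional: it matches the $-\frac{\eta_{k+1}}{2(1+\sigma_1^2)}\mathbb{E}\|g_{k+1}\|_{P_k^{-1}}^2$ term from Lemma~\ref{lem:stoch-descent}, and with $\eta_{k+1}=2\alpha_k^2(1+\sigma_1^2)$ the two cancel exactly in the main theorem.
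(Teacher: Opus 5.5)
Your proof is correct and is essentially the paper's argument. Your convexity step $f(x_{k+1})-f(x_k^+)\le\langle\nabla f(x_{k+1}),x_{k+1}-x_k^+\rangle$ is the paper's dropping of $D_f(x_k^+,x_{k+1})\ge 0$, and the cross term $\langle g_{k+1},y_k-x^\star\rangle$ is removed by (G1), which needs $y_k$ and $\alpha_k$ to be $\mathcal F_k$-measurable: the paper uses this implicitly and you state it explicitly. The only difference is organizational: you expand the $P_k$-quadratic exactly around $y_k$, whereas the paper applies the Bregman three-point identity at $z_{k+1}$ and then Cauchy--Schwarz, which is actually an equality here because $y_k-y_{k+1}=\alpha_kP_k^{-1}g_{k+1}$, so both routes give the identical bound.
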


\begin{proof}[Proof of Lemma~\ref{lem:I3-bound}]
	Notice that $\mathcal{E}(z_{k+1},P_k) = D_{\mathcal{E}}(z_{k+1}, z^\star; P_k)$ and $\mathcal{E}(z_{k}^+,P_k) = D_{\mathcal{E}}(z_{k}^+, z^\star; P_k)$. Using the three-point identity for the Bregman divergence together with the updates \eqref{eq:AdamSHANG2-a} and \eqref{eq:AdamSHANG2-b}, we expand
	\begin{equation}\label{equ9-app}
		\begin{aligned}
			\mathbb{I}_3 
			&= \mathbb{E}\!\left[
			\langle \nabla_z \mathcal{E}(z_{k+1},P_k),\, z_{k+1}-z_k^+\rangle
			- D_{\mathcal{E}}(z_k^+,z_{k+1};P_k) \right] \\
			&= \mathbb{E}\!\left[
			-\alpha_k \langle \nabla f(x_{k+1}),\, x_{k+1}-x^\star\rangle
			+\alpha_k \langle y_k-y_{k+1},\, g_{k+1}\rangle \right. \\
			&\quad \left.
			- D_{\mathcal{E}}(z_k^+,z_{k+1};P_k)
			+\alpha_k \langle \nabla f(x_{k+1})-g_{k+1},\, y_k-x^\star\rangle \right].
		\end{aligned}
	\end{equation}
	The last term vanishes in expectation by the unbiasedness assumption. Using the definition of the Bregman divergence and the convexity of $f$, we have
	\begin{equation}\label{equ5-app}
		-\langle\nabla f(x_{k+1}),x_{k+1}-x^\star\rangle
		\le
		-\big( f(x_{k+1}) - f(x^\star)\big),
	\end{equation}
	and
	\begin{equation}\label{equ8-app}
		-D_{\mathcal{E}}(z_k^+,z_{k+1};P_k) = -D_{f}(x_k^+,x_{k+1})-\frac{1}{2}\|y_k-y_{k+1}\|_{P_k}^2
		\le -\frac{1}{2}\|y_k-y_{k+1}\|_{P_k}^2.
	\end{equation}
	For the cross term, Cauchy--Schwarz yields
	\begin{equation}\label{equ10-app}
		\alpha_k \langle y_k-y_{k+1},\, g_{k+1}\rangle
		\le
		\frac{1}{2}\|y_k-y_{k+1}\|_{P_k}^2
		+\frac{\alpha_k^2}{2}\|g_{k+1}\|_{P_k^{-1}}^2.
	\end{equation}
	Substituting \eqref{equ5-app}, \eqref{equ8-app}, and \eqref{equ10-app} into \eqref{equ9-app}, we obtain the desired result.
\end{proof}

\subsection{Proof of Theorem~\ref{theorem:discrete stochastic}}
\label{app:proof-main-stoch}

\begin{proof}[Proof of Theorem~\ref{theorem:discrete stochastic}]
	Summing the bounds for $\mathbb{I}_1$, $\mathbb{I}_2$, $\mathbb{I}_3$, and using the definition of $\mathcal{E}(z_{k+1},P_{k+1})$, we obtain
	\[
	\begin{aligned}
		\mathbb{E}\!\left[\mathcal{E}(z_{k+1}^+,P_{k+1})-\mathcal{E}(z_k^+,P_k)\right]
		&\le
		\mathbb{E}\!\left[ -\alpha_k \mathcal{E}(z_{k+1},P_{k+1}) -\frac{\eta_{k+1}}{2(1+\sigma_1^2)}\|g_{k+1}\|_{P_k^{-1}}^2
		+\frac{\eta_{k+1}\sigma_0^2}{1+\sigma_1^2}
		\right. \\
		&\qquad
		\left.+\frac{\alpha_k\gamma_k}{2}\|y_{k+1}-x^\star\|_{P_k^{-1}G_{k+1}^2}^2
		+\frac{\alpha_k^2}{2}\|g_{k+1}\|_{P_k^{-1}}^2
		\right].
	\end{aligned}
	\]
	Applying Lemma~\ref{lem:stoch-descent} at $x_{k+1}$ and multiplying by $-\alpha_k$, we obtain
	\[
	\begin{aligned}
		\mathbb{E}\!\left[-\alpha_k\mathcal{E}(z_{k+1},P_{k+1})\right]
		&\le
		\mathbb{E}\!\left[
		-\alpha_k\mathcal{E}(z_{k+1}^+,P_{k+1})
		-\frac{\alpha_k\eta_{k+1}}{2(1+\sigma_1^2)}\|g_{k+1}\|_{P_k^{-1}}^2
		+\frac{\alpha_k \eta_{k+1}\sigma_0^2}{1+\sigma_1^2}
		\right].
	\end{aligned}
	\]
	Using $\sup_k\|y_k-x^\star\|_\infty\le R$ and $\gamma_k=\alpha_k/R^2$, we further have
	\[
	\frac{\alpha_k\gamma_k}{2}\|y_{k+1}-x^\star\|_{P_k^{-1}G_{k+1}^2}^2
	=
	\frac{\alpha_k^2 }{2R^2} \sum_{i=1}^d \big( y_{k+1,i} - x_i^\star \big)^2 \big(P_k^{-1}\big)_{ii} g_{k+1, i}^2
	\le
	\frac{\alpha_k^2}{2}\|g_{k+1}\|_{P_k^{-1}}^2.
	\]
	With the chosen parameters $2 \alpha_k^2 (1+\sigma_1^2) = \eta_{k+1}$,
	\[
	2\alpha_k^2-(1+\alpha_k)\frac{\eta_{k+1}}{1+\sigma_1^2}\le 0,
	\]
	so the $g_{k+1}$-weighted terms are nonpositive and can be dropped. Therefore,
	\[
	\mathbb{E}\!\left[\mathcal{E}(z_{k+1}^+,P_{k+1})\right]
	\le
	\mathbb{E}\!\left[
	\frac{1}{1+\alpha_k}\mathcal{E}(z_k^+,P_k)
	+2\alpha_k^2\sigma_0^2
	\right].
	\]
	This yields the claimed one-step bound, and iterating the recursion proves the theorem.
\end{proof}

\begin{remark}
For analytical tractability, we assume that there exists $R>0$ such that \[ \sup_{k\ge0}\|y_k-x^\star\|_\infty \le R. \]
We emphasize that this boundedness condition is a standard analytical hurdle common to adaptive algorithms \citep{duchi11a,gupta2018shampoo,liu2024adagradanisotropicsmoothness,an2025asgoadaptivestructuredgradient,kovalev2025,xie2025structuredpreconditionersadaptiveoptimization}. While $x^\star$ is unknown in practice, a common remedy is to incorporate a projection (or clipping) step onto a sufficiently large bounding box:
$$ 
\mathcal{H}_R=\{y\in\mathbb{R}^d:\|y\|_\infty\le R/2\},
$$
where $R/2 >\|x^\star\|_\infty$. Under this modification, the $y$-update becomes:
\begin{equation}
	\begin{aligned}
		y_{k+\frac{1}{2}} &= y_k - \alpha_k P_k^{-1} g_{k+1}, \\
		y_{k+1} &= \operatorname*{argmin}_{y\in\mathcal{H}_R} \|y - y_{k+\frac{1}{2}}\|_{P_{k}}^2. 
	\end{aligned}
\end{equation}
Because $\mathcal{H}_R$ is convex and $P_{k}\succ 0$, this weighted projection is well-defined. Since $x^\star \in \mathcal{H}_R$, the standard properties of the projection operator imply:
$$
\|y_{k+1} - x^\star \|_{P_{k}}^2 \le \|y_{k+\frac{1}{2}} - x^\star \|_{P_{k}}^2.
$$
This inequality shows that the projection can only decrease 
the Lyapunov
$\mathcal{E}(z_{k+1},P_k)$. Consequently, the bound in Lemma~\ref{lem:I3-bound} (derived for the unprojected step) remains a valid upper bound for the projected step, and the convergence proof of Theorem~\ref{theorem:discrete stochastic} remains intact while the boundedness condition is now satisfied by construction.
\end{remark}

\section{Proof and discussion of Corollary~\ref{cor:two-phase-stoch}}
\label{app:two-phase-stoch}

We first prove Corollary~\ref{cor:two-phase-stoch}, and then discuss the resulting two-phase behavior.

\begin{proof}[Proof of Corollary~\ref{cor:two-phase-stoch}]
	
	For convenience, define the pathwise quantities
	\[
	\rho_k
	:=\prod_{\tau=0}^k (1+\alpha_\tau)^{-1},
	\qquad
	\Psi_{k+1}
	:=\sum_{\tau=1}^{k+1}
	\Bigl(\prod_{j=\tau}^{k+1}(1+\alpha_j)^{-1}\Bigr)\alpha_\tau^2.
	\]
	Theorem~\ref{theorem:discrete stochastic} gives
	\[
	\mathbb{E}\!\left[\mathcal{E}(z_{k+1}^+,P_{k+1})\right]
	\le
	\mathbb{E}\!\left[
	\rho_k\,\mathcal{E}(z_0^+,P_0)+2\sigma_0^2\,\Psi_{k+1}
	\right].
	\]
	
	\textbf{Bounding the contraction term.}
	Since $\underline{\alpha}_\tau\le\alpha_\tau$ a.s.\ for every $\tau$ and the map
	$\alpha\mapsto(1+\alpha)^{-1}$ is strictly decreasing, we have the pathwise inequality
	\[
	\rho_k
	=\prod_{\tau=0}^k(1+\alpha_\tau)^{-1}
	\le\prod_{\tau=0}^k(1+\underline{\alpha}_\tau)^{-1}
	=\underline{\rho}_k.
	\]
	Since $\underline{\rho}_k$ is deterministic and $\mathcal{E}(z_0^+,P_0)$ is
	$\mathcal{F}_0$-measurable, taking expectations gives
	\[
	\mathbb{E}\!\left[\rho_k\,\mathcal{E}(z_0^+,P_0)\right]
	\le\underline{\rho}_k\,\mathbb{E}\!\left[\mathcal{E}(z_0^+,P_0)\right].
	\]
	
	\textbf{Bounding the noise accumulation term.}
	For each inner product in $\Psi_{k+1}$, applying the same monotonicity argument
	to the denominator ($\underline{\alpha}_j\le\alpha_j$ implies $(1+\alpha_j)^{-1}\le(1+\underline{\alpha}_j)^{-1}$)
	and the upper bound $\alpha_\tau\le\overline{\alpha}_\tau$ to the squared term, we obtain pathwise
	\[
	\Psi_{k+1}
	\le\sum_{\tau=1}^{k+1}
	\Bigl(\prod_{j=\tau}^{k+1}(1+\underline{\alpha}_j)^{-1}\Bigr)\overline{\alpha}_\tau^2
	=\overline{\Psi}_{k+1}.
	\]
	Combining the two bounds yields
	\[
	\mathbb{E}\!\left[\mathcal{E}(z_{k+1}^+,P_{k+1})\right]
	\le\underline{\rho}_k\,\mathbb{E}\!\left[\mathcal{E}(z_0^+,P_0)\right]
	+2\sigma_0^2\,\overline{\Psi}_{k+1},
	\]
	which is the stated inequality.
	
	\smallskip
	\textbf{Part (i): polynomial envelopes.}
	Suppose the random stepsizes $\{\alpha_k\}$ are almost surely sandwiched 
	between two polynomial sequences, i.e., there exist constants $a_->1$, 
	$a_+>0$, and $b>0$ such that
	\[
	\frac{a_-}{k+b}\le\alpha_k\le\frac{a_+}{k+b}
	\qquad\text{a.s. for all }k\ge 0.
	\]
	This envelope is consistent with the deterministic scaling 
	$\alpha_k\sim 2/k$ established in the 
	companion analysis~\citep{yu2026adamhnagconvergentreformulationadam}.
	
	\emph{Contraction term.}
	We express the telescoping product using the Gamma function.
	Noting that $1+\underline{\alpha}_\tau=(\tau+b+a_-)/(\tau+b)$, we have
	\[
	\underline{\rho}_k
	=\prod_{\tau=0}^k\frac{\tau+b}{\tau+b+a_-}
	=\frac{\Gamma(b+a_-)}{\Gamma(b)}\cdot\frac{\Gamma(k+b+1)}{\Gamma(k+b+a_-+1)}.
	\]
	By Stirling's approximation,
	$\Gamma(k+b+1)/\Gamma(k+b+a_-+1)\sim (k+b)^{-a_-}$ as $k\to\infty$,
	so
	\[
	\underline{\rho}_k\asymp(k+b)^{-a_-}.
	\]
	
	\emph{Inner product.}
	By the same Gamma-function identity,
	\[
	\prod_{j=\tau}^{k+1}(1+\underline{\alpha}_j)^{-1}
	=\frac{\Gamma(\tau+b+a_-)}{\Gamma(\tau+b)}
	\cdot\frac{\Gamma(k+b+2)}{\Gamma(k+b+2+a_-)}.
	\]
	Applying Stirling's approximation to each ratio gives
	\[
	\frac{\Gamma(\tau+b+a_-)}{\Gamma(\tau+b)}\asymp(\tau+b)^{a_-},
	\qquad
	\frac{\Gamma(k+b+2)}{\Gamma(k+b+2+a_-)}\asymp(k+b)^{-a_-},
	\]
	so the inner product is $\asymp\bigl((\tau+b)/(k+b)\bigr)^{a_-}$.
	
	\emph{Noise accumulation term.}
	Substituting the above estimate into $\overline{\Psi}_{k+1}$,
	\[
	\overline{\Psi}_{k+1}
	\asymp
	\frac{a_+^2}{(k+b)^{a_-}}
	\sum_{\tau=1}^{k+1}(\tau+b)^{a_--2}.
	\]
	Since $a_->1$ implies $a_--2>-1$, the sum is comparable to an integral via
	the integral comparison lemma:
	\[
	\sum_{\tau=1}^{k+1}(\tau+b)^{a_--2}
	\asymp
	\int_1^{k+1}(t+b)^{a_--2}\,dt
	=\frac{(k+b+1)^{a_--1}-(1+b)^{a_--1}}{a_--1}
	\asymp(k+b)^{a_--1}.
	\]
	Therefore,
	\[
	\overline{\Psi}_{k+1}
	\asymp(k+b)^{-a_-}\cdot(k+b)^{a_--1}
	=(k+b)^{-1},
	\]
	which proves part~(i).
	
	\smallskip
	\textbf{Part (ii): constant envelopes.}
	Let $\underline{\alpha}_k\equiv\underline{\alpha}>0$ and $\overline{\alpha}_k\equiv\overline{\alpha}>0$.
	
	\emph{Contraction term.}
	This is immediate: $\underline{\rho}_k=(1+\underline{\alpha})^{-(k+1)}$.
	
	\emph{Noise accumulation term.}
	The inner product simplifies to
	\[
	\prod_{j=\tau}^{k+1}(1+\underline{\alpha})^{-1}=(1+\underline{\alpha})^{-(k+2-\tau)}.
	\]
	Hence
	\[
	\overline{\Psi}_{k+1}
	=\overline{\alpha}^2\sum_{\tau=1}^{k+1}(1+\underline{\alpha})^{-(k+2-\tau)}.
	\]
	Substituting $m=k+2-\tau$ (so $m$ runs from $1$ to $k+1$) and summing the
	resulting geometric series,
	\[
	\overline{\Psi}_{k+1}
	=\overline{\alpha}^2\sum_{m=1}^{k+1}(1+\underline{\alpha})^{-m}
	=\overline{\alpha}^2\cdot
	\frac{(1+\underline{\alpha})^{-1}\bigl(1-(1+\underline{\alpha})^{-(k+1)}\bigr)}
	{1-(1+\underline{\alpha})^{-1}}
	=\frac{\overline{\alpha}^2}{\underline{\alpha}}
	\Bigl(1-(1+\underline{\alpha})^{-(k+1)}\Bigr).
	\]
	Substituting both expressions into the main bound yields part~(ii).
	As $k\to\infty$, $\overline{\Psi}_{k+1}\to\overline{\alpha}^2/\underline{\alpha}$,
	confirming that the iterates converge linearly up to a noise neighborhood.
\end{proof}

\paragraph{Interpretation of the two-phase behavior.}
Theorem~\ref{theorem:discrete stochastic} yields a Lyapunov bound of the form
\[
\mathbb{E}\!\left[\mathcal{E}(z_{k+1}^+,P_{k+1})\right]
\lesssim
\text{contraction term}
+
\text{noise accumulation term}.
\]
Corollary~\ref{cor:two-phase-stoch} makes this decomposition explicit by comparing
the realized adaptive stepsizes with deterministic envelopes.

We first recall the intuition from the deterministic companion paper
~\citep{yu2026adamhnagconvergentreformulationadam}. There, the diagonal preconditioner recursion can be interpreted as a balance between a dissipative term and a gradient-driven forcing term. Write $P_k = \mathrm{diag}(p_k)$. In the absence of forcing, the adaptive
scaling formally yields
\[
p_k\asymp k^{-2},\qquad \alpha_k\sim \frac{2}{k},
\qquad \rho_k\asymp k^{-2},
\]
recovering the standard accelerated decay. When the gradient forcing is
significant, it prevents the preconditioner from decaying too quickly and allows
larger admissible values of $\alpha_k$, leading to stronger transient
contraction. As the objective decreases and the gradients become smaller, this
forcing weakens and the dynamics gradually approach the unforced accelerated
regime.

The stochastic setting adds a second effect: the additive noise creates an
accumulation term in the Lyapunov estimate. Under polynomial envelopes
\[
\alpha_k \asymp \frac{1}{k+b},
\]
Corollary~\ref{cor:two-phase-stoch} shows that the contraction term decays like
$(k+b)^{-a_-}$, while the accumulated stochastic term decays only like
$(k+b)^{-1}$. Since $a_->1$, the early behavior is governed by the contraction
term, but the asymptotic rate is limited by noise accumulation. The crossover
iteration is determined by the balance
\[
(k^\star+b)^{a_--1}
\asymp
\frac{\mathcal{E}(z_0^+,P_0)}{\sigma_0^2}.
\]
Thus the polynomial regime is most visible when the initial energy is large
relative to the additive noise level.

When the iterates reach the noise-dominated region, the gradient signal no
longer decays to zero in an effective sense. The adaptive stepsize may then be
bounded below by the residual stochastic forcing, and this behavior is captured
by the constant-envelope case in Corollary~\ref{cor:two-phase-stoch}. In this
regime, the contraction term is geometric, while the stochastic accumulation
saturates at a nonzero level of order
\[
\sigma_0^2\,\frac{\overline{\alpha}^2}{\underline{\alpha}}.
\]
Therefore the method contracts linearly up to a noise neighborhood.

Overall, the picture is as follows. In the early stage, large gradients act as
forcing in the preconditioner recursion and can support larger adaptive
stepsizes, strengthening the transient contraction. As the gradients decrease, this forcing weakens, and in the noiseless limit the dynamics are expected to approach the weak-forcing regime where $\alpha_k\sim 2/k$ and the contraction resembles the deterministic accelerated behavior. In the stochastic setting, however, additive noise prevents this deterministic decay from persisting indefinitely: the polynomial-envelope bound is eventually limited by the $\mathcal{O}(1/k)$ noise-accumulation term, and under persistent residual fluctuations the constant-envelope description gives linear contraction only up to a noise neighborhood.
 
\section{Proof of Lemma~\ref{lem:trace-lower-bound}}
\label{app:practical-alpha}

This appendix justifies the surrogate
\[
\frac{\mathrm{Tr}(P_k^{-1})}{\mathrm{Tr}(P_k^{-2})}
\]
used in Section~\ref{sec:practical alpha} for the ratio
\[
q_k
=
\frac{\sum_{i=1}^d p_{k,i}^{-1} s_{k,i}^2}
{\sum_{i=1}^d p_{k,i}^{-2} s_{k,i}^2},
\qquad
P_k=\mathrm{diag}(p_{k,1},\dots,p_{k,d}),
\qquad
s_{k,i}^2=\mathbb{E}[g_{k+1,i}^2\mid\mathcal F_k].
\]
The argument is based on the following weighted Chebyshev sum inequality~\citep{mitrinovic1993classical}.

\begin{lemma}[Weighted Chebyshev sum inequality]\label{lemma:chebyshev}
	Let $\{w_i\}_{i=1}^d$, $\{a_i\}_{i=1}^d$, and $\{b_i\}_{i=1}^d$ be positive sequences. Assume that $a_i$ and $b_i$ are similarly ordered, namely
	\[
	(a_i-a_j)(b_i-b_j)\ge 0,
	\qquad \forall\, i,j\in\{1,\dots,d\}.
	\]
	Then
	\[
	\Big(\sum_{i=1}^d w_i a_i b_i\Big)\Big(\sum_{j=1}^d w_j\Big)
	\ge
	\Big(\sum_{i=1}^d w_i a_i\Big)\Big(\sum_{j=1}^d w_j b_j\Big).
	\]
\end{lemma}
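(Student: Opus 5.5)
The plan is to reduce the weighted inequality to a double sum of nonnegative terms, which is the classical Chebyshev argument. Expand the difference
\[
\Delta:=\Big(\sum_{i=1}^d w_i a_i b_i\Big)\Big(\sum_{j=1}^d w_j\Big)-\Big(\sum_{i=1}^d w_i a_i\Big)\Big(\sum_{j=1}^d w_j b_j\Big)
=\sum_{i,j} w_i w_j\,(a_i b_i - a_i b_j).
\]
Exchanging the roles of $i$ and $j$ gives the same quantity in the form $\sum_{i,j} w_iw_j(a_jb_j-a_jb_i)$, because the weight $w_iw_j$ is symmetric.

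Averaging these two expressions of $\Delta$ yields the symmetric representation
\[
\Delta=\frac12\sum_{i,j=1}^d w_i w_j\,(a_i-a_j)(b_i-b_j).
\]
To verify it, expand the product:
\[
(a_i-a_j)(b_i-b_j)=a_ib_i+a_jb_j-a_ib_j-a_jb_i,
\]
which matches the sum of the two integrands above. By the similar-ordering hypothesis each factor $(a_i-a_j)(b_i-b_j)$ is nonnegative. The weights $w_iw_j$ are positive, so $\Delta\ge0$, which is the claim.

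I then plan to record how this feeds Lemma~\ref{lem:trace-lower-bound}. Take $w_i=p_i^{-2}$, $a_i=p_i$ and $b_i=s_i$, which are similarly ordered by Assumption~\ref{assump:alignment}. The inequality then reads
\[
\Big(\sum_i p_i^{-1}s_i\Big)\Big(\sum_j p_j^{-2}\Big)\ge\Big(\sum_i p_i^{-1}\Big)\Big(\sum_j p_j^{-2}s_j\Big),
\]
and dividing by the positive quantity $\sum_j p_j^{-2}\sum_j p_j^{-2}s_j$ gives the trace ratio bound. If some $s_i=0$, the positivity requirement on $b_i$ is not actually used: nonnegativity of the $b_i$ suffices, as long as the denominator $\sum_j p_j^{-2}s_j$ is nonzero, which the condition $g_{k+1}\neq0$ provides.

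There is no real obstacle. The only step needing care is the symmetrization identity, that is, keeping the index swap consistent so the cross terms pair correctly.
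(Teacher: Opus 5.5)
Your proof is correct and is the same symmetrization argument the paper uses. The paper expands $\sum_{i,j} w_iw_j(a_i-a_j)(b_i-b_j)$ directly to get $2\Delta$, while you reach the same identity by averaging the two index-swapped forms of $\Delta$; your remarks on the application to Lemma~\ref{lem:trace-lower-bound} (roles of $a_i,b_i$ swapped relative to the paper, and only nonnegative weights needed) are also accurate.
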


\begin{proof}
	Consider
	\[
	\sum_{i,j=1}^d w_i w_j (a_i-a_j)(b_i-b_j).
	\]
	Expanding the product gives
	\[
	\begin{aligned}
		\sum_{i,j=1}^d w_i w_j (a_i-a_j)(b_i-b_j)
		&=
		\sum_{i,j=1}^d w_i w_j (a_i b_i + a_j b_j - a_i b_j - a_j b_i) \\
		&=
		\sum_{i,j=1}^d w_i w_j (a_i b_i + a_j b_j) - \sum_{i,j=1}^d w_i w_j (a_i b_j +a_j b_i) \\
		&=
		2\Big(\sum_{i=1}^d w_i a_i b_i\Big)\Big(\sum_{j=1}^d w_j\Big)
		-2\Big(\sum_{i=1}^d w_i a_i\Big)\Big(\sum_{j=1}^d w_j b_j\Big).
	\end{aligned}
	\]
	If $(a_i-a_j)(b_i-b_j)\ge 0$ for all $i,j$, then the left-hand side is nonnegative, and the conclusion follows.
\end{proof}

\begin{proof}[Proof of Lemma~\ref{lem:trace-lower-bound}]
	The desired inequality is equivalent to
	\[
	\Big(\sum_{i=1}^d p_{k,i}^{-1} s_{k,i}^2\Big)\Big(\sum_{i=1}^d p_{k,i}^{-2}\Big)
	\ge
	\Big(\sum_{i=1}^d p_{k,i}^{-2} s_{k,i}^2\Big)\Big(\sum_{i=1}^d p_{k,i}^{-1}\Big).
	\]
	Rewriting the first factor on the left and the second factor on the right gives
	\[
	\Big(\sum_{i=1}^d p_{k,i}^{-2}\, p_{k,i} s_{k,i}^2\Big)\Big(\sum_{i=1}^d p_{k,i}^{-2}\Big)
	\ge
	\Big(\sum_{i=1}^d p_{k,i}^{-2} s_{k,i}^2\Big)\Big(\sum_{i=1}^d p_{k,i}^{-2} p_{k,i}\Big).
	\]
	Applying Lemma~\ref{lemma:chebyshev} with
	\[
	w_i=p_{k,i}^{-2},
	\qquad
	a_i=s_{k,i}^2,
	\qquad
	b_i=p_{k,i},
	\]
	and using the assumption that $\{s_{k,i}^2\}_{i=1}^d$ and $\{p_{k,i}\}_{i=1}^d$ are similarly ordered, we obtain
	\[
	\Big(\sum_{i=1}^d p_{k,i}^{-2}\, p_{k,i} s_{k,i}^2\Big)\Big(\sum_{i=1}^d p_{k,i}^{-2}\Big)
	\ge
	\Big(\sum_{i=1}^d p_{k,i}^{-2} s_{k,i}^2\Big)\Big(\sum_{i=1}^d p_{k,i}^{-2} p_{k,i}\Big),
	\]
	which is exactly the claimed bound.
\end{proof}

\section{Derivation of the Adam-form representation of {\it Adam-SHANG-s}}
\label{app:adamshangs-adam-form}

Ignoring bias correction, Adam iterates as
\begin{equation}\label{eq:adam}
	\left\{
	\begin{aligned}
		x_{k+1} &= x_k - \eta (\sqrt{V_k}+\varepsilon I)^{-1} m_k,\\
		m_{k+1} &= \theta_1 m_k + (1-\theta_1) g_{k+1}, \\
		V_{k+1} &= \theta_2 V_k + (1-\theta_2) G^2_{k+1},
	\end{aligned}
	\right.
\end{equation}
where $\eta>0$ is the learning rate, $\varepsilon>0$ prevents division by zero, $\theta_1,\theta_2\in(0,1)$ are momentum parameters, and $G^2_{k+1}=\mathrm{diag}(g_{k+1}.^2)$.

We now derive the equivalent Adam-form representation \eqref{eq:adamshangs-adam-form} of {\it Adam-SHANG-s}.

\begin{proof}
	Starting from \eqref{eq:AdamSHANGs-a}, we have
	\begin{equation}\label{eq:adamshangs-app-x}
		x_{k+1}-x_k
		=
		-\frac{\alpha_k}{1+\alpha_k}(x_k-y_k)
		-\frac{\alpha_k\beta_k}{1+\alpha_k}P_k^{-1}g_k.
	\end{equation}
	Moreover, substituting $\tilde{\alpha}_k=\frac{\alpha_k}{1+\alpha_k}$ into \eqref{eq:AdamSHANGs-b} gives
	\begin{equation}\label{eq:adamshangs-app-y}
		\frac{y_{k+1}-y_k}{\alpha_k}
		=
		-(y_{k+1}-y_k)-P_{k+1}^{-1}g_{k+1}.
	\end{equation}
	
	Define $m_k$ by
	\[
	P_k^{-1}m_k=x_k-y_k.
	\]
	Combining \eqref{eq:adamshangs-app-x} and \eqref{eq:adamshangs-app-y}, we obtain
	\[
	\frac{m_{k+1}-m_k}{\alpha_k}
	=
	\frac{P_{k+1}-P_k}{\alpha_k}P_k^{-1}m_k
	-m_{k+1}
	+g_{k+1}
	-\beta_k P_{k+1}P_k^{-1}g_k.
	\]
	Substituting \eqref{eq:AdamSHANGs-c} yields
	\[
	m_{k+1}
	=
	\frac{1}{1+\alpha_k}P_{k+1}P_k^{-1}m_k
	+
	\frac{\alpha_k}{1+\alpha_k}g_{k+1}
	-
	\frac{\alpha_k\beta_k}{1+\alpha_k}P_{k+1}P_k^{-1}g_k.
	\]
	Finally, using $P_k=\sqrt{V_k}$ gives
	\[
	V_{k+1}
	=
	\frac{1}{1+\alpha_k}P_{k+1}P_k^{-1}V_k
	+
	\frac{\alpha_k}{1+\alpha_k}\gamma_k G_{k+1}^2.
	\]
	Together with \eqref{eq:adamshangs-app-x}, this proves \eqref{eq:adamshangs-adam-form}.
\end{proof}

When $P_{k+1}P_k^{-1}\approx I$, setting $\beta_k=0$ and $\gamma_k=1$ recovers an Adam-type update without bias correction. In this sense, the term $\beta_k(\sqrt{V_k})^{-1}g_k$ acts as a curvature-aware correction to Adam.

\section{Supplement of Experiments}\label{completetest}
Here are some experimental results that are not presented in the main text.

\begin{algorithm}[t]
	\caption{{\it Adam-SHANG-s}}
	\label{alg:adamshangs}
	
	\begin{algorithmic}[1]
		
	\REQUIRE Initial parameters $x_0$, $y_0$, $P_{-1}=P_0\succ0$, positive parameters $\lambda, \beta, \gamma, \varepsilon>0$
	   
		\FOR{$k = 0,1,2,\ldots$}
		
		\STATE Compute step size:
		\[
		\alpha_k =\lambda \sqrt{\frac{\mathrm{Tr}((P_{k}  + \varepsilon I)^{-1})}{\mathrm{Tr}((P_{k}  + \varepsilon I)^{-2})}},  \qquad \tilde{\alpha}_{k}  = \frac{\alpha_k}{1+\alpha_k}
		\]
		
		\STATE Update iterate
		\[
		x_{k+1} = \frac{1}{1+\alpha_k}x_k + \frac{\alpha_k}{1+\alpha_k} y_{k} - \frac{\alpha_{k}}{1+\alpha_k} \beta (P_{k} + \varepsilon I)^{-1} g_k
		\]
		
		\STATE Compute a stochastic gradient estimator $g_{k+1}$ of $\nabla f(x_{k+1})$
		
		\STATE Update preconditioner
		\[
		P_{k+1}
		=
		\frac{1-\tilde{\alpha}_{k} }{2} P_{k}
		+
		\frac{1}{2}\sqrt{(1-\tilde{\alpha}_{k} )^2 P_{k}^2
			+4\tilde{\alpha}_{k}  \gamma \mathrm{diag}(g_{k+1}.^2)}
		\]
		
		\STATE Update iterate
		\[
		y_{k+1} = y_{k} -\tilde{\alpha}_{k} (P_{k+1} + \varepsilon I)^{-1} g_{k+1}
		\]

		\ENDFOR
		
	\end{algorithmic}
\end{algorithm}

\subsection{Hyperparameter settings for convex optimization}
\label{app:convex-example}

We compare {\it Adam-SHANG}~\eqref{eq:AdamSHANG} and {\it Adam-SHANG-s}~\eqref{eq:AdamSHANGs} with SGD, SHANG~\citep{yu2026shangrobuststochasticacceleration}, and Adam~\citep{kingma2017adammethodstochasticoptimization}.

For SGD, we use the theoretically optimal stepsize
\(
\eta=\frac{1}{(1+\sigma_1^2)L}.
\)

For SHANG, we set $\alpha_k=2/(k+1)$, $P_k=\alpha_k^2(1+\sigma_1^2)^2L$, and
\(
\alpha_k\beta_k P_{k-1}=\frac{1}{(1+\sigma_1^2)L},
\)
which corresponds to the scalar-$P_k$, $\gamma_k=0$ instance of~\eqref{eq:AdamSHANG}.

For Adam, we use the decaying stepsize
\(
\eta_k=\frac{\ell_0}{\sqrt{k+1}},
\)
with $(\beta_1,\beta_2)=(0.9,0.999)$, where $\ell_0$ is tuned by grid search for each setting.

For {\it Adam-SHANG}, we set
\(
\eta_{k+1}=\alpha_k\beta_k=2\alpha_k^2(1+\sigma_1^2),
\gamma_k=\frac{\alpha_k}{R^2},
\)
and choose
\[
\alpha_k=\frac{0.5}{1+\sigma_1^2}
\sqrt{\frac{\mathrm{Tr}(P_k^{-1})}{2L\,\mathrm{Tr}(P_k^{-2})}}.
\]

For {\it Adam-SHANG-s}, we set $\tilde{\alpha}_k=\alpha_k/(1+\alpha_k)$,
\(
\eta_{k+1}=\alpha_k\beta_k=3\tilde{\alpha}_k^2(1+\sigma_1^2), \gamma_k=\frac{\tilde{\alpha}_k}{2R^2},
\)
and choose
\[
\alpha_k=\frac{0.5}{1+\sigma_1^2}
\sqrt{\frac{\mathrm{Tr}(P_k^{-1})}{6L\,\mathrm{Tr}(P_k^{-2})}}.
\]

In both methods, since $x^\star=0$ is known, we update
\[
R_k=\sup_{0\le j\le k}\|y_j\|_\infty
\]
dynamically and use $R=R_k$ in the parameter rules above.

Results are averaged over $200$ independent runs of length $T=10^5$. 

\subsection{Empirical Verification}\label{test:sufficient decay}
\paragraph{Admissibility Condition \eqref{equ:Q_kinlemma}}
In the convex experiments, we instantiate the admissible stepsize using the trace-ratio rule~\eqref{eq:alpha-trace} instead of the conservative spectral lower bound. 
This raises an \emph{a posteriori} question: can the practical choice violate the inequality needed in Lemma~\ref{lem:stoch-descent}? To examine this, we evaluate the condition numerically along the computed trajectory. For {\it Adam-SHANG}, Lemma~\ref{lem:stoch-descent} requires 
\(
\eta_{k+1}=2\alpha_k^2(1+\sigma_1^2) \le \frac{q_k}{(1+\sigma_1^2)L},
\)
or equivalently,
\begin{equation}
	\mathrm{Ratio}
	:=\frac{ \frac{q_k}{(1+\sigma_1^2)L}}{ 2\alpha_k^2(1+\sigma_1^2) }
	\ge 1.
\end{equation}
The admissibility condition is satisfied whenever \(\mathrm{Ratio}\ge 1\).

\begin{figure}[!htbp]
	\centering
	\begin{subfigure}{0.48\textwidth}
		\centering
		\includegraphics[width=\linewidth]{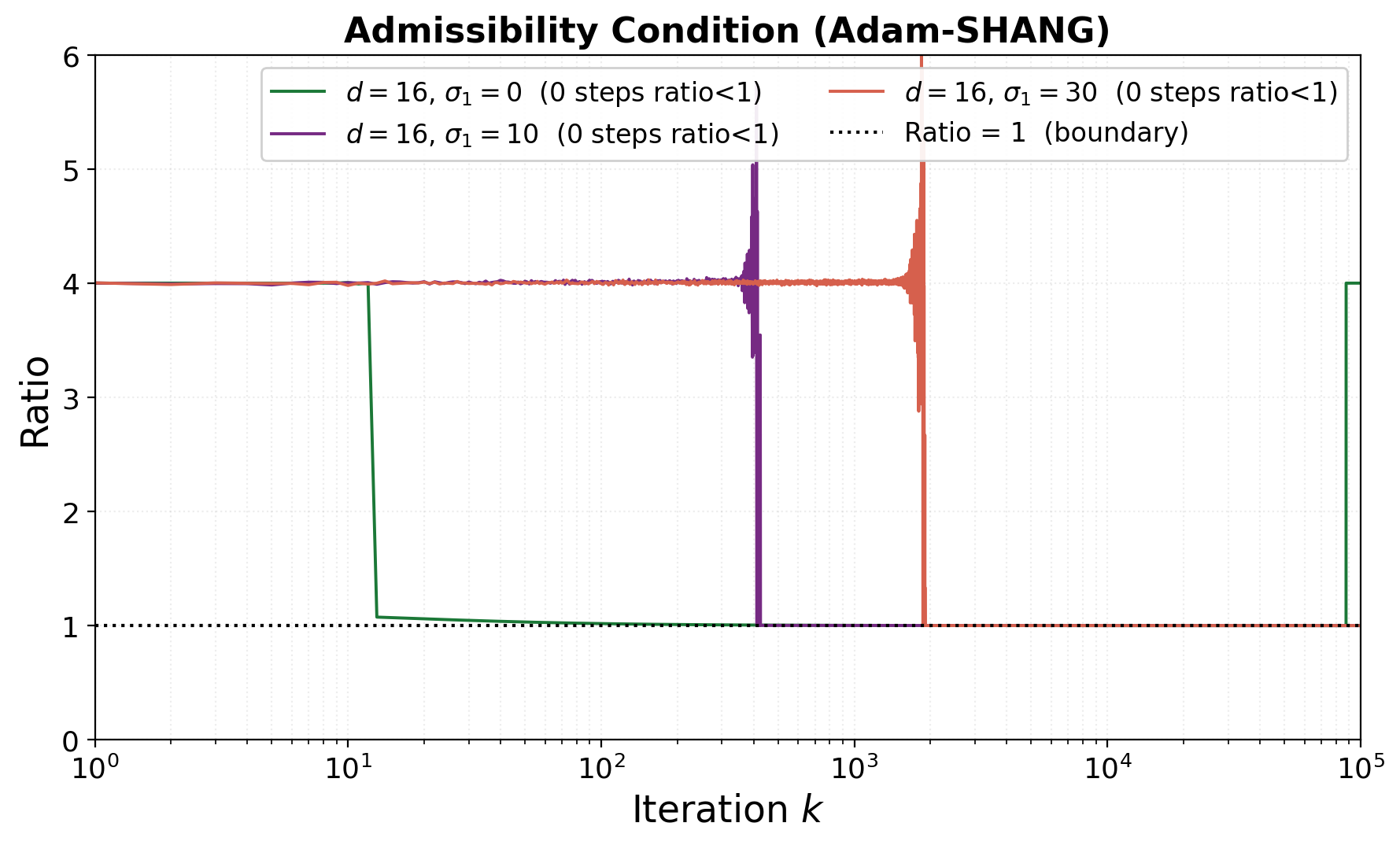}
	\end{subfigure}\hfill
	\begin{subfigure}{0.48\textwidth}
		\centering
		\includegraphics[width=\linewidth]{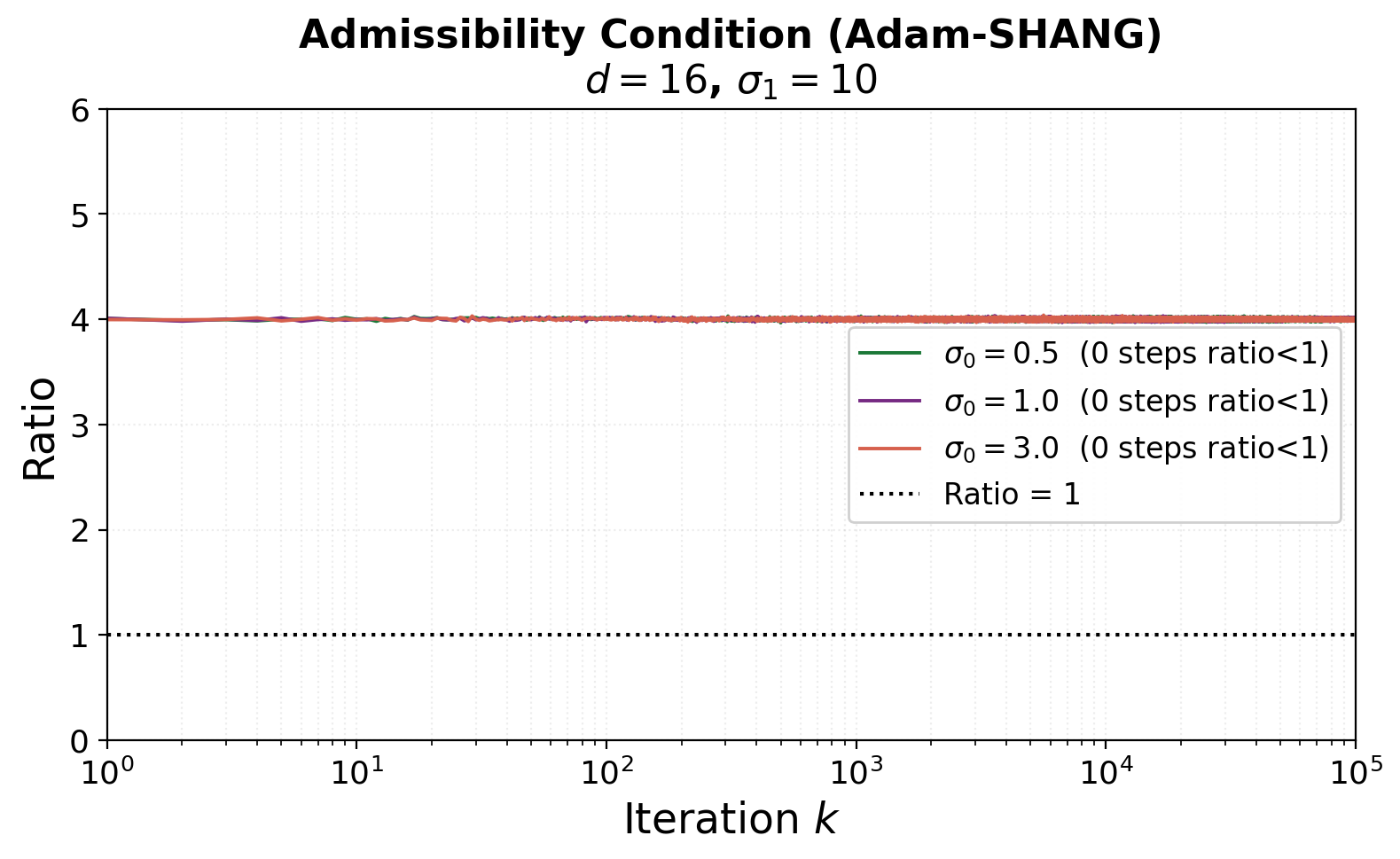}
	\end{subfigure}\hfill
	\caption{Empirical verification of the admissibility condition for {\it Adam-SHANG}. The ratio should be not less than $1$. }
	\label{fig:consistency}
\end{figure}

\paragraph{Observation.}
In all tested cases, the monitored ratio remains above $1$ throughout the whole trajectory. Thus, with the safety factor $\lambda=0.5$ in the practical rule \eqref{eq:alpha-trace}, we do not observe any violation of the admissibility condition \eqref{equ:Q_kinlemma} during the iteration. This supports the use of the practical rule and suggests that it is both reasonable and sufficiently conservative for the experiments considered here.

\paragraph{Coordinatewise alignment \eqref{assump:alignment}.}
We also monitor the coordinatewise alignment condition in
Assumption~\ref{assump:alignment}. 
For each iteration \(k\) and each independent run \(r=1,\ldots,N\), write
\[
P_k^{(r)}=\operatorname{diag}(p_{k,1}^{(r)},\ldots,p_{k,d}^{(r)}),
\qquad
s_{k,i}^{(r)}
:=
\mathbb E\!\left[
g_{k+1,i}^2\mid x_{k+1}^{(r)}
\right].
\]
Here \(r\) indexes the independent random runs used to estimate the expected
trajectory. 

In the additive-multiplicative noise model used in the convex experiments,
this conditional second moment is computable exactly:
\[
s_{k,i}^{(r)}
=
(1+\sigma_1^2)
\big(\partial_i f(x_{k+1}^{(r)})\big)^2
+
\frac{\sigma_0^2}{d}.
\]

For a fixed trajectory \(r\), a coordinate pair \((i,j)\) violates the ordering
condition at iteration \(k\) if
\[
\big(s_{k,i}^{(r)}-s_{k,j}^{(r)}\big)
\big(p_{k,i}^{(r)}-p_{k,j}^{(r)}\big)<0.
\]
We define the ordering violation rate by
\[
\rho_k
:=
\frac{1}{N\binom{d}{2}}
\sum_{r=1}^{N}
\#\left\{
1\le i<j\le d:
\big(s_{k,i}^{(r)}-s_{k,j}^{(r)}\big)
\big(p_{k,i}^{(r)}-p_{k,j}^{(r)}\big)<0
\right\}.
\]
Thus, \(\rho_k\) is the fraction of all coordinate pairs, averaged over
independent runs, that violate the ordering condition at iteration \(k\). In our convex
experiments \(d=16\), so each trajectory contributes
\(\binom{16}{2}=120\) coordinate pairs. For the stochastic runs we use
\(N=200\) trajectories, so \(\rho_k\) is computed from
\(200\times120=24{,}000\) pairwise comparisons at each iteration. In the
deterministic case \(\sigma_1=\sigma_0=0\), we use \(N=1\).

\begin{figure}[!htbp]
	\centering
	\begin{subfigure}{0.48\textwidth}
		\centering
		\includegraphics[width=\linewidth]{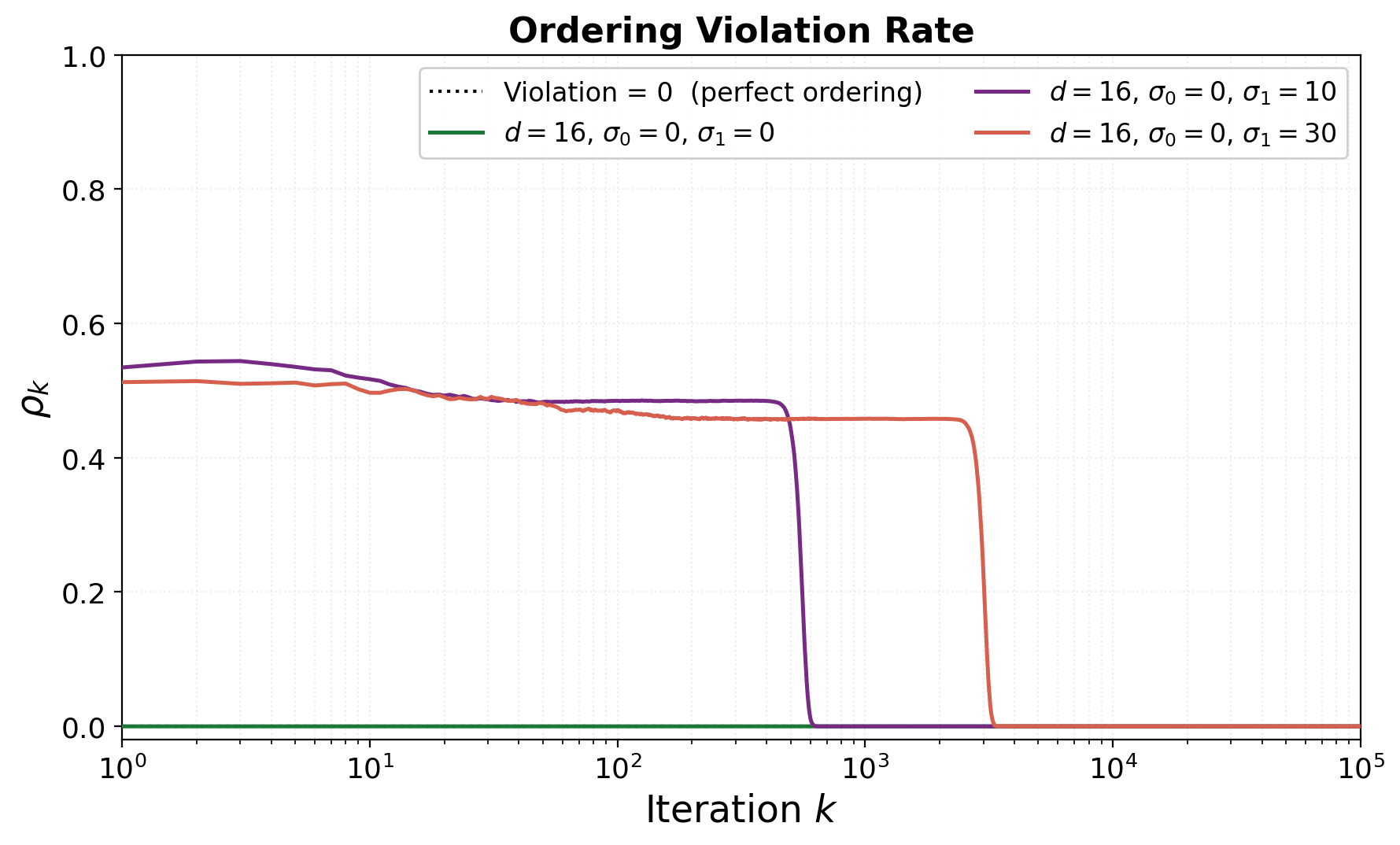}
		\caption{Pure multiplicative noise.}
	\end{subfigure}\hfill
	\begin{subfigure}{0.48\textwidth}
		\centering
		\includegraphics[width=\linewidth]{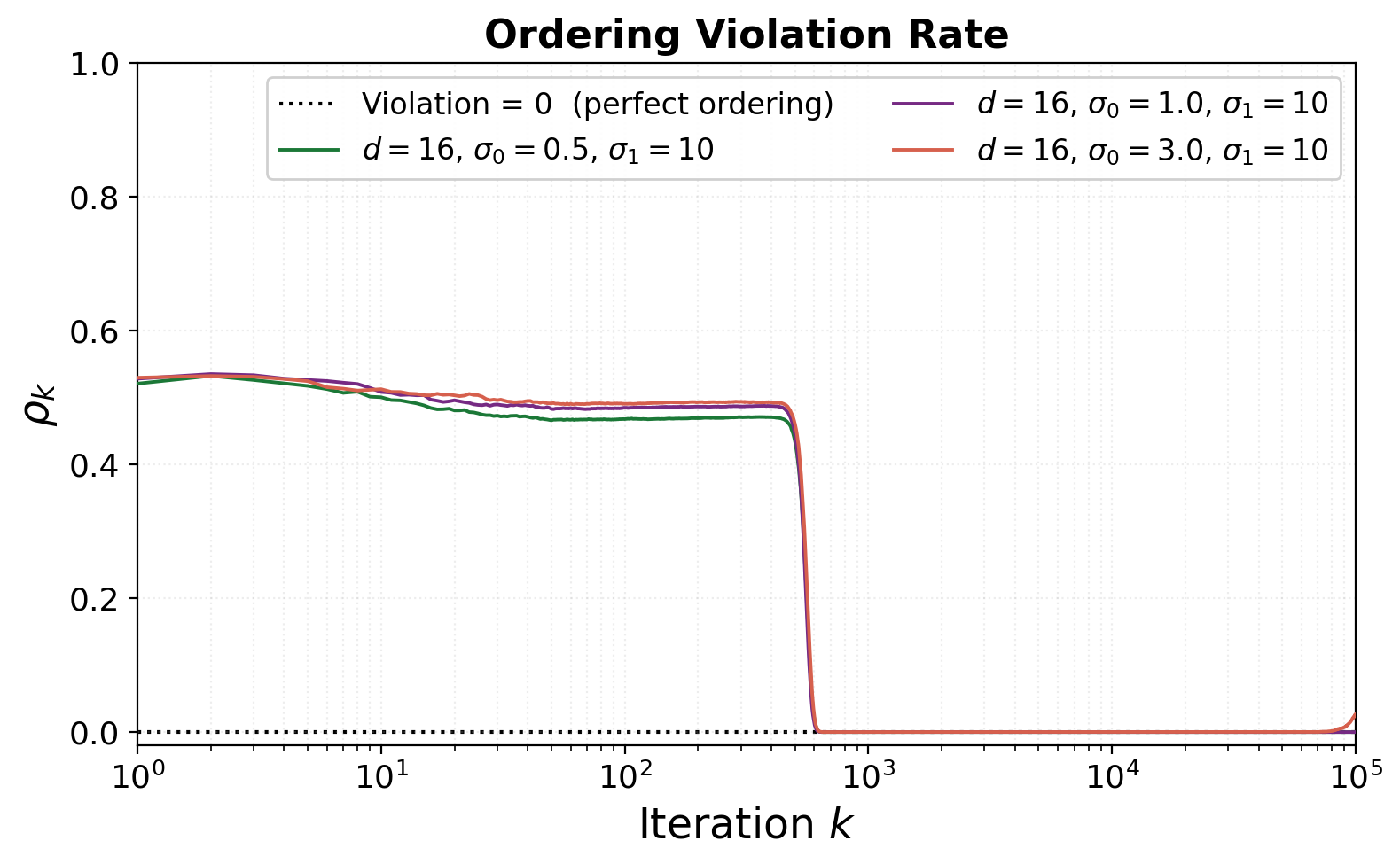}
		\caption{Additive-multiplicative noise.}
	\end{subfigure}
	\caption{Ordering violation rate for Assumption~\ref{assump:alignment}.
	A value of \(0\) corresponds to perfect coordinatewise alignment.}
	\label{fig:ordering-violation}
\end{figure}

Figure~\ref{fig:ordering-violation} reports this diagnostic. The ordering violation rate remains nonzero, and can be close to the random-ordering level, during a substantial pre-asymptotic phase. 
This reflects the fact that \(P_k\) is built from past squared gradients, whereas \(s_k\) represents the current conditional second-moment signal; before the metric and gradient scales stabilize, their coordinatewise orderings need not agree. 
After the dynamics enter a more stable regime, the violation rate drops to zero, indicating that the preconditioner ordering eventually aligns with the current second-moment ordering.

This diagnostic is supplementary. 
The convergence argument does not require exact pairwise alignment at every iteration; it requires the learning-rate admissibility condition in Lemma~\ref{lem:stoch-descent}. 
As shown in Figure~\ref{fig:consistency}, with the safety factor used in the practical rule, the monitored sufficient-decay ratio remains above one throughout the entire trajectory. 
Thus, even when the strict ordering condition is violated during the pre-asymptotic phase, the practical trace-ratio rule still satisfies the monitored admissibility condition in all tested settings.

\subsection{Supplement of Stress test}\label{app:reddi} 

This online convex optimization setting lies outside our theoretical
framework, so the parameter coupling derived in Section~\ref{sec:Adam-SHANG}
does not apply. We instead use the decoupled parametrization
$\alpha_k = \lambda \sqrt{\mathrm{Tr}(P_k^{-1})/\mathrm{Tr}(P_k^{-2})}$
with $\beta$ and $\gamma$ tuned independently via grid search on the
deterministic sequence; the same
$(\lambda,\beta,\gamma)=(0.001, 10^{-4}, 0.05)$ are shared by both
\textit{Adam-SHANG} and \textit{Adam-SHANG-s}. Adam and AMSGrad
use $(\eta,\beta_1,\beta_2)=(0.01,0.9,0.99)$
following \citet{reddi2019convergenceadam}.

A notable observation from Figure~\ref{fig:redditest_deterministic} is
that the average regret $R_t/t$ converges to zero for all methods in
the deterministic setting, yet the iterate trajectories tell a starkly
different story: Adam drifts to $x=+1$ while \textit{Adam-SHANG}
converges to $x^\star=-1$. This separation between regret and iterate
behavior is precisely the phenomenon highlighted by
\citet{reddi2019convergenceadam}: vanishing average regret does not
imply convergence to the optimal solution.

For the stochastic setting, we report both the mean
(Figure~\ref{fig:redditest_mean}) and median
(Figure~\ref{fig:redditest_median}) over 30 independent runs.
The gap between mean and median for AMSGrad reveals a bimodal behavior: a fraction of runs converge to $x^\star=-1$ while others remain near $x=+1$, pulling the mean away from the optimum. \textit{Adam-SHANG} does not exhibit this sensitivity; both its mean
and median trajectories converge consistently to $x^\star=-1$, outperforming even AMSGrad, which was specifically designed to remedy Adam's non-convergence.

We do not offer a full theoretical explanation for this robustness, but
offer three observations that may guide future work. First, the
lagged-update structure of \textit{Adam-SHANG} decouples the metric
update from the parameter step, which may prevent the self-reinforcing
drift that traps Adam-type methods near $x=+1$. Second, the nonlinear
feedback in the preconditioner update, the injection term $P_k^{-1}G^2$
rather than $G^2$, attenuates the influence of large past gradients on
$P_k$, naturally suppressing the accumulation of dominant historical
curvature estimates that drives Adam's divergence in this setting; this
self-correcting property is structurally absent in both Adam and
AMSGrad. Third, and perhaps more fundamentally, the lagged-update
discretization has roots in numerical analysis: semi-implicit schemes
are well known to provide superior stability over fully explicit
counterparts~\citep{ascher1995implicit}, and this property may carry
over to the optimization setting. Together, these observations suggest
that the lagged-update discretization offers a structurally different
stabilization pathway from the long-term memory approach of
\citet{reddi2019convergenceadam}, a direction that warrants further
theoretical investigation.

\begin{figure}[!htbp]
	\centering
	\begin{subfigure}{0.48\textwidth}
		\centering
		\includegraphics[width=\linewidth]{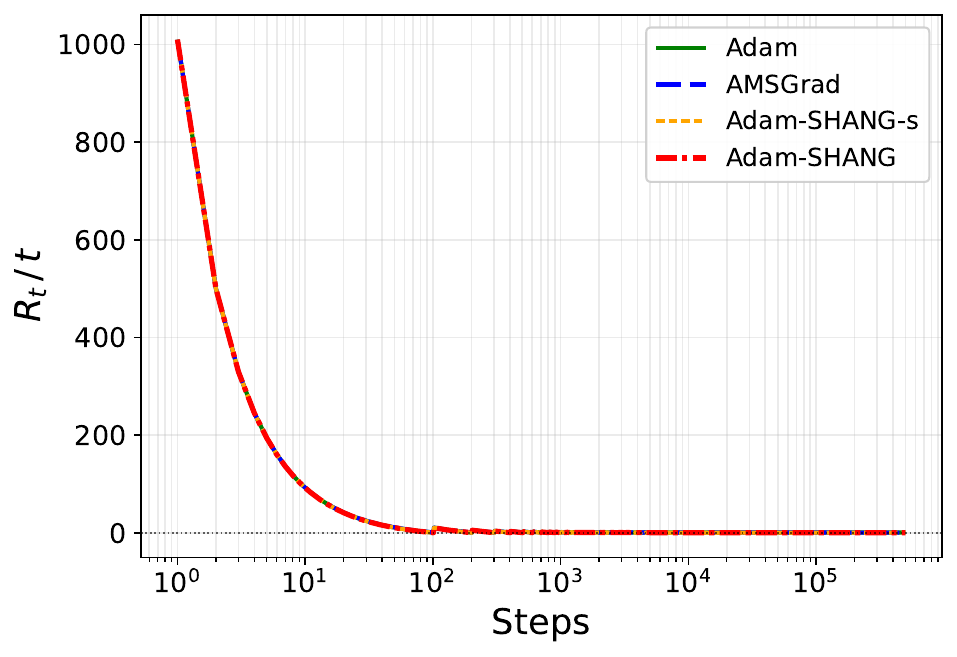}
	\end{subfigure}\hfill
	\begin{subfigure}{0.48\textwidth}
		\centering
		\includegraphics[width=\linewidth]{reddi_det_iterates}
	\end{subfigure}
	\caption{\small The deterministic version of the counterexample
		from \citet{reddi2019convergenceadam}. Each panel shows average
		regret $R_t/t$ (left) and iterate trajectory $x_t$ (right).}
	\label{fig:redditest_deterministic}
\end{figure}

\begin{figure}[!htbp]
	\centering
	\begin{subfigure}{0.48\textwidth}
		\centering
		\includegraphics[width=\linewidth]{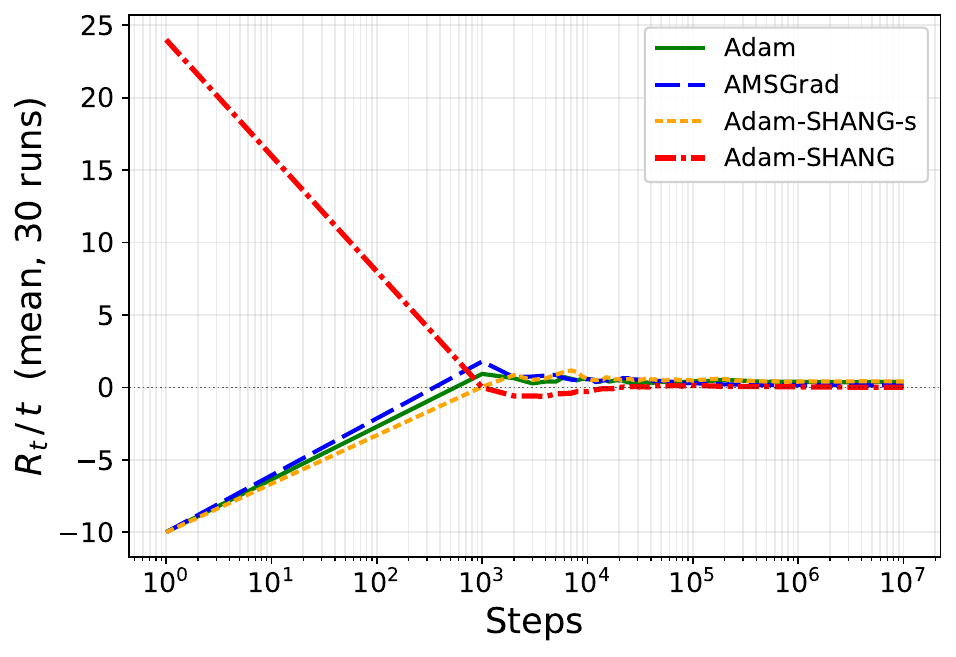}
	\end{subfigure}\hfill
	\begin{subfigure}{0.48\textwidth}
		\centering
		\includegraphics[width=\linewidth]{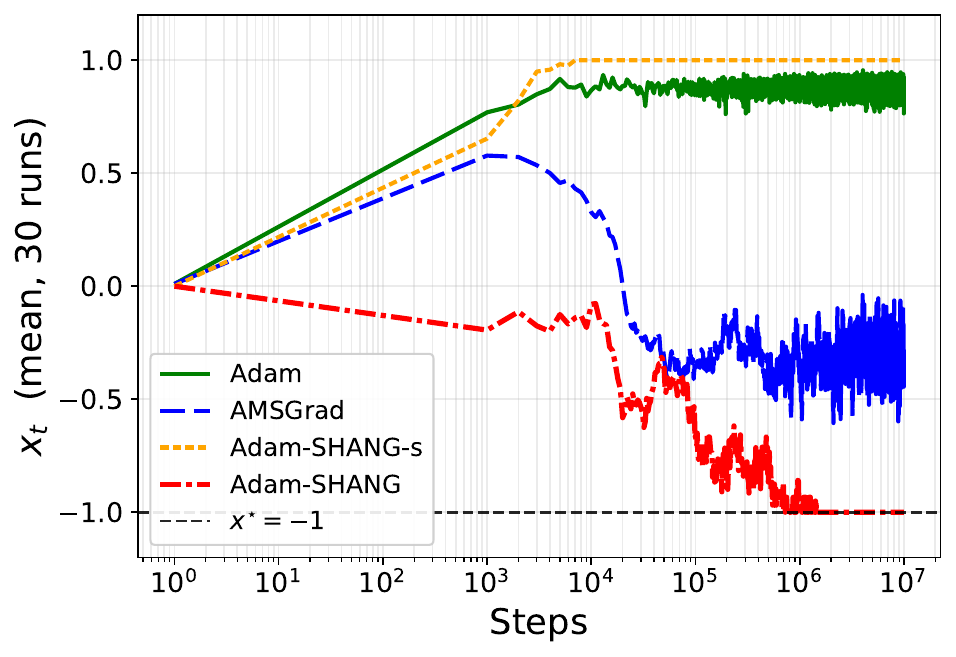}
	\end{subfigure}
	\caption{\small The stochastic version of the counterexample from
		\citet{reddi2019convergenceadam}. Each panel reports the \emph{mean}
		average regret $R_t/t$ (left) and the \emph{mean} iterate trajectory
		$x_t$ (right) over 30 independent runs. The large initial regret of
		\textit{Adam-SHANG} reflects a small fraction of runs with an
		anomalously large first step caused by $g_0 = 1010$ at initialization;
		this is a non-typical artifact, as confirmed by the median in
		Figure~\ref{fig:redditest_median}.}
		\label{fig:redditest_mean}
\end{figure}

\begin{figure}[!htbp]
	\centering
	\begin{subfigure}{0.48\textwidth}
		\centering
		\includegraphics[width=\linewidth]{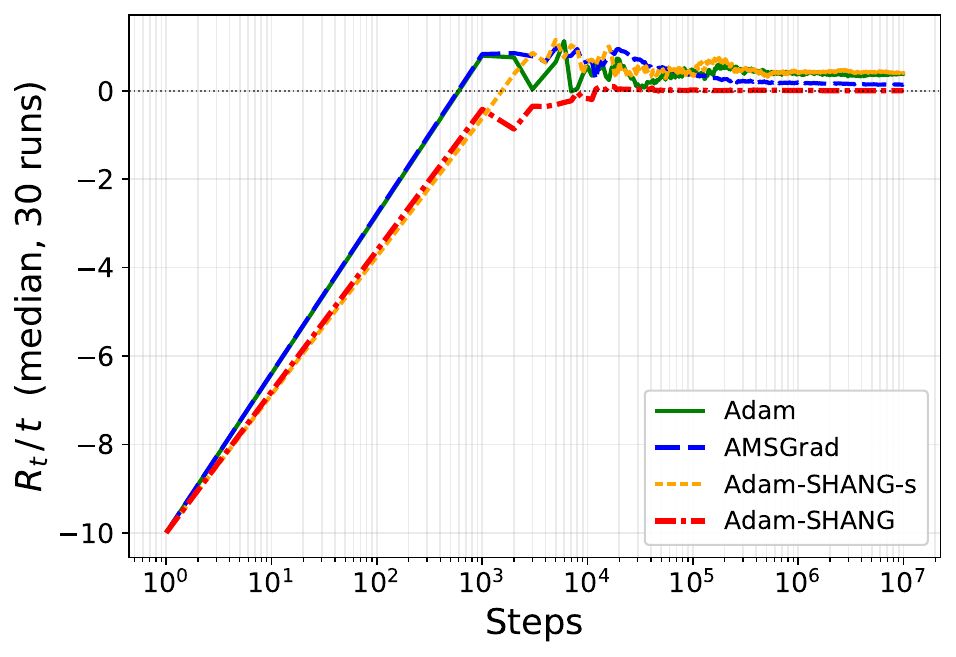}
	\end{subfigure}\hfill
	\begin{subfigure}{0.48\textwidth}
		\centering
		\includegraphics[width=\linewidth]{reddi_stoch_median_iterates}
	\end{subfigure}
	\caption{\small The stochastic version of the counterexample from
		\citet{reddi2019convergenceadam}. Each panel reports the \emph{median}
		average regret $R_t/t$ (left) and the \emph{median} iterate trajectory
		$x_t$ (right) over 30 independent runs.}
	\label{fig:redditest_median}
\end{figure}

\subsection{Experimental and hyperparameter settings for \texttt{text8}}
\label{app:text8}

We evaluate character-level language modeling on \texttt{text8}~\cite{Mahoney2009} using a $4$-layer Transformer encoder~\cite{vaswani2023attentionneed} with Pre-LayerNorm, $8$ attention heads, hidden dimension $256$, feedforward dimension $1024$, and about $3.17$M parameters. Training is autoregressive with causal masking and sequence length $T=256$. We use the full \texttt{text8} dataset ($\approx 10^8$ characters), split chronologically into $90\%$ training and $10\%$ validation sets. All methods are trained for $30{,}000$ steps with gradient clipping set to $1.0$.

All hyperparameters are selected independently for each method by grid search. For \textit{Adam-SHANG}, we use $(\lambda,\beta,\gamma)=(0.5,0.05,0.001)$; for \textit{Adam-SHANG-s}, we use $(\lambda,\beta,\gamma)=(0.5,0.05,0.01)$. Both methods use weight decay $10^{-2}$.

For the baselines, AdamW uses $\eta=0.005$, $(\beta_1,\beta_2)=(0.9,0.999)$, and weight decay $10^{-2}$; Adam uses $\eta=0.005$, $(\beta_1,\beta_2)=(0.9,0.999)$, and weight decay $10^{-5}$; 





\subsection{Hyperparameter settings for CIFAR-100}
\label{app:cifar100}

We summarize here the hyperparameter choices used in the CIFAR-100 experiments. For the Adam-SHANG variants, \textit{Adam-SHANG} and \textit{Adam-SHANG-s} use $(\lambda,\beta,\gamma)=(0.1,0.1,0.005)$ and $(\lambda,\beta,\gamma)=(0.1,0.05,0.001)$, respectively, with both configurations selected by grid search. For the baseline optimizers, Adam and AdamW use $\eta=10^{-3}$ with $(\beta_1,\beta_2)=(0.9,0.999)$, while SHANG++ uses $(\alpha,\gamma,\rho)=(0.5,10,1.5)$. Weight decay is set to $10^{-2}$ for \textit{Adam-SHANG}, \textit{Adam-SHANG-s}, and AdamW, and to $10^{-5}$ for Adam and SHANG++, following standard practice for these methods. Each model is trained for $50$ epochs with three independent random seeds, and we report the mean and standard deviation over the runs.

Moreover, standard data augmentation is applied for CIFAR-100, including normalization, random cropping, and random horizontal flipping.

\subsection{Image classification on CIFAR-10}\label{test:cifar10}
As a complementary evaluation, we further assess the optimization performance on standard image classification tasks using ResNet-34~\cite{HeZhangRenSun2016} on CIFAR-10~\cite{Krizhevsky2009}. Standard data augmentation is applied, including normalization, random cropping, and random horizontal flipping. Experiments are conducted with batch sizes $32$ and $128$ to examine the effect of stochastic gradient noise.

\begin{figure}[!htbp]
	\centering
	\begin{subfigure}{0.48\textwidth}
		\centering
		\includegraphics[width=\linewidth]{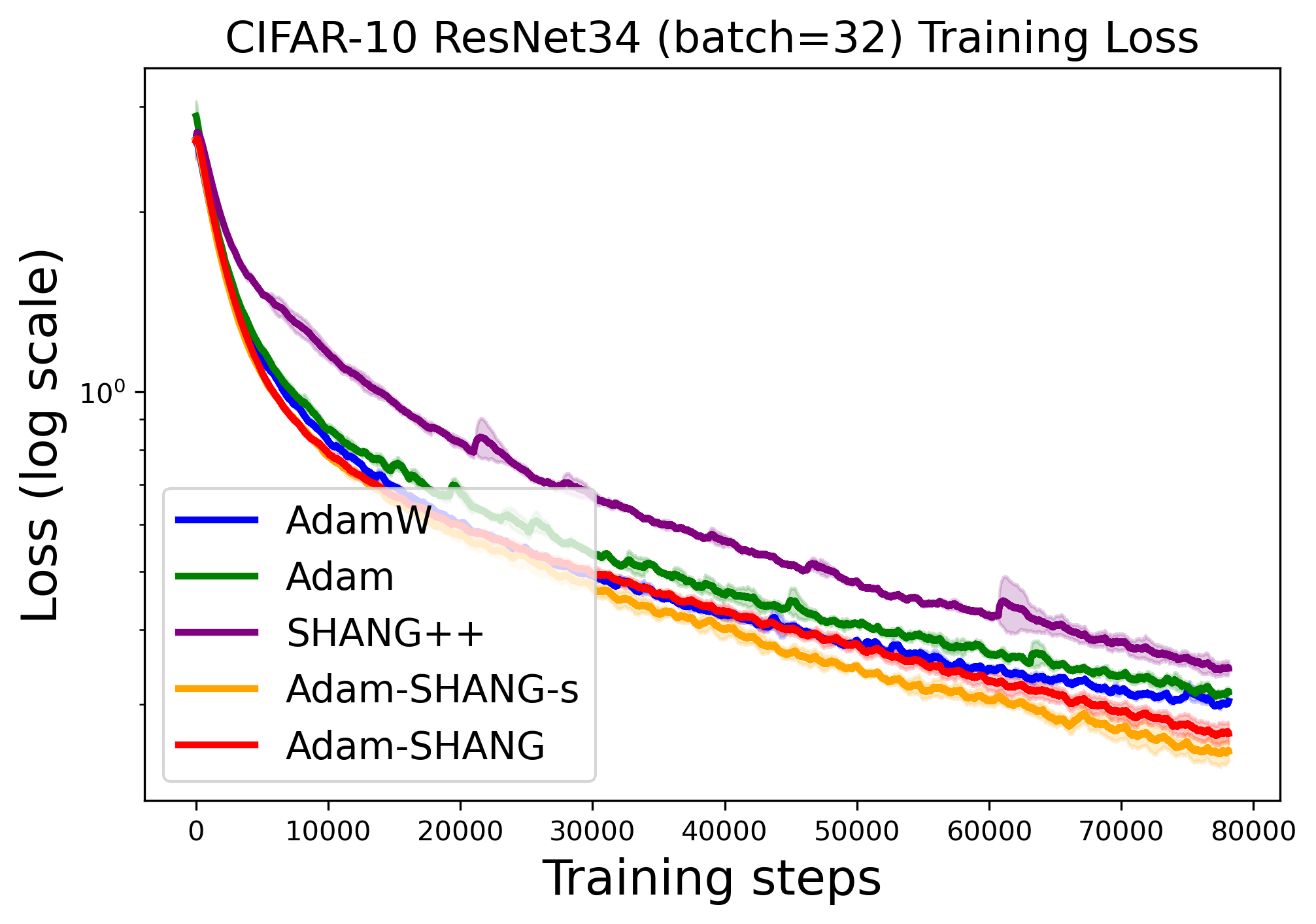}
	\end{subfigure}\hfill
	\begin{subfigure}{0.48\textwidth}
		\centering
		\includegraphics[width=\linewidth]{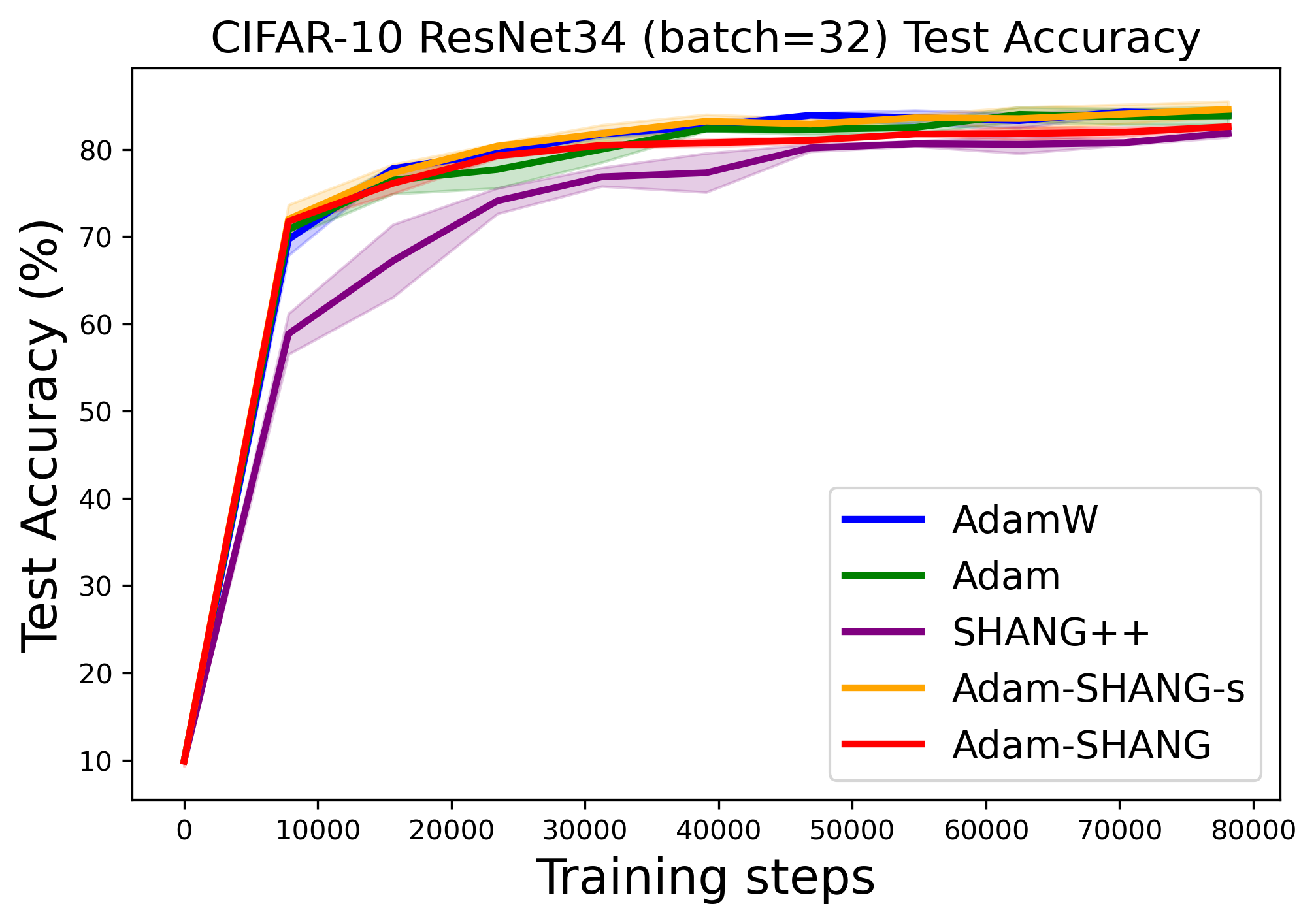}
	\end{subfigure}\hfill
	\begin{subfigure}{0.48\textwidth}
		\centering
		\includegraphics[width=\linewidth]{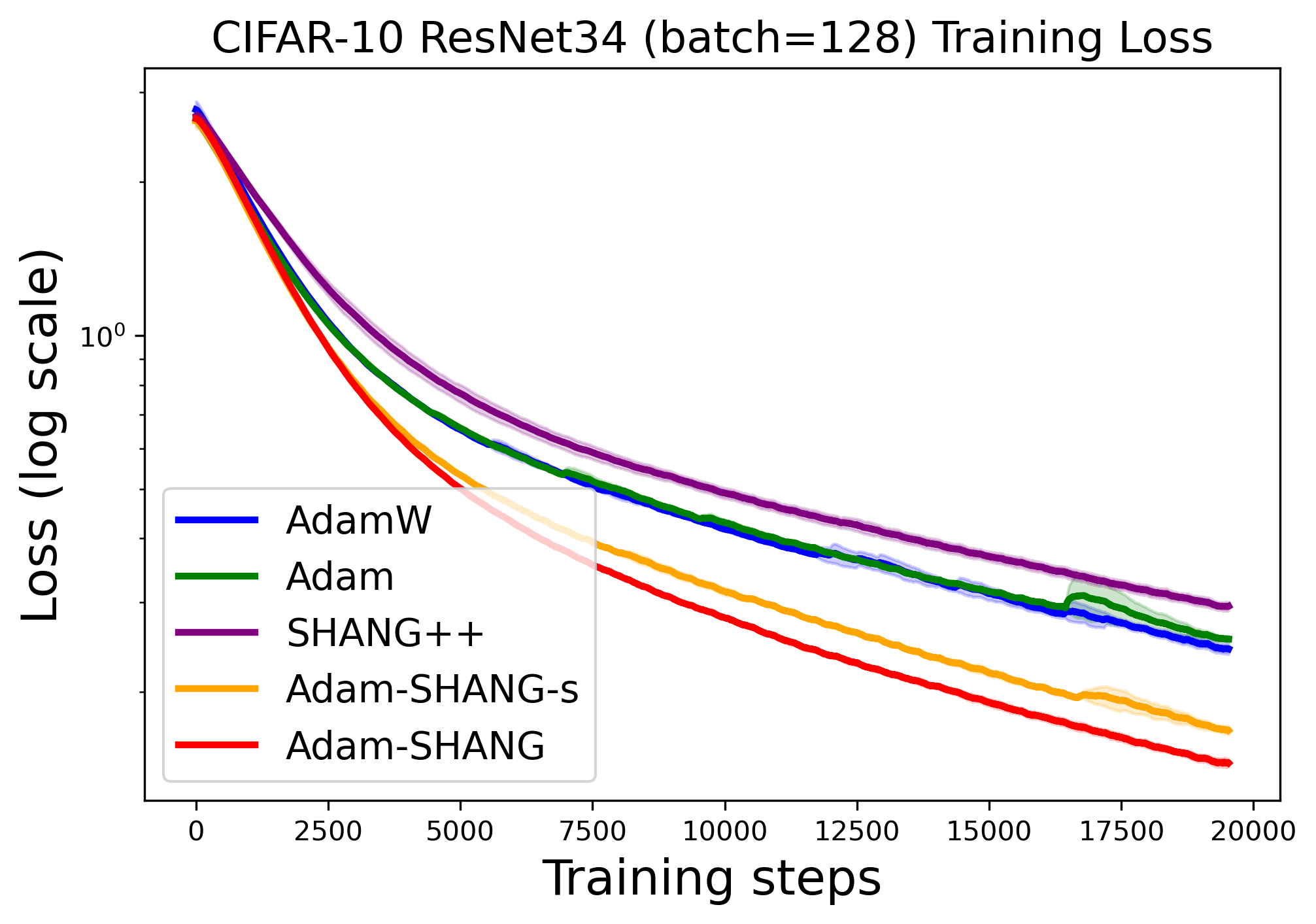}
	\end{subfigure}\hfill
	\begin{subfigure}{0.48\textwidth}
		\centering
		\includegraphics[width=\linewidth]{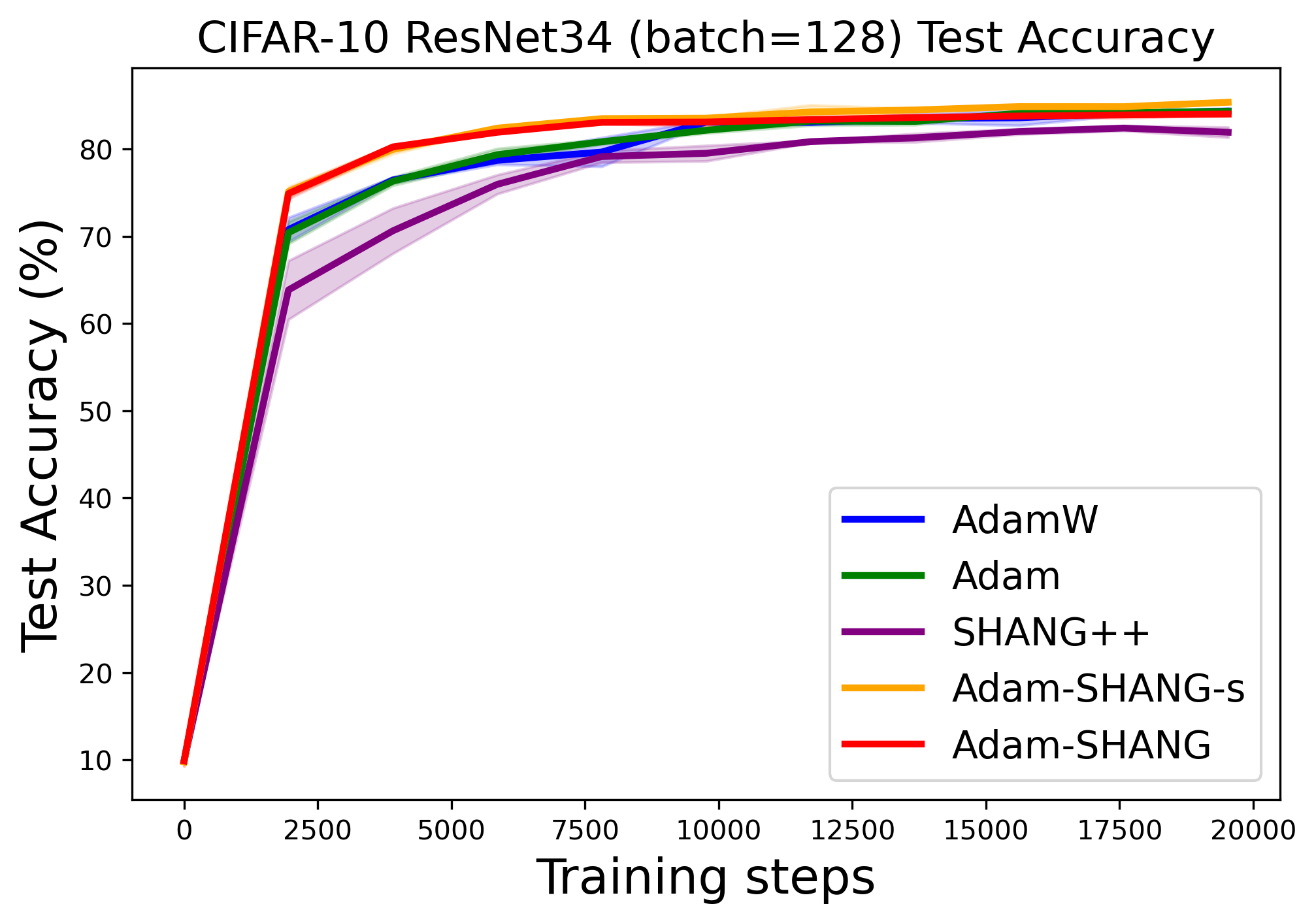}
	\end{subfigure}\hfill
	\caption{\small Training loss and test accuracy of different optimizers for training ResNet-34 on CIFAR-10. }
	\label{fig:cifar10_resnet}
\end{figure}

For the Adam-SHANG variants, \textit{Adam-SHANG} and \textit{Adam-SHANG-s} use $(\lambda,\beta,\gamma)=(0.1,0.05,0.05)$ and $(\lambda,\beta,\gamma)=(0.1,0.1,1)$, respectively, selected via grid search. For the baseline optimizers, Adam and AdamW use $\eta=10^{-3}$ with $(\beta_1,\beta_2)=(0.9,0.999)$, while SHANG++ uses $(\alpha, \gamma, \rho)=(0.5, 5, 1.5)$. We set weight decay to $10^{-2}$ for \textit{Adam-SHANG}, \textit{Adam-SHANG-s}, and AdamW, and to $10^{-5}$ for Adam and SHANG++, following common practice. Each model is trained for $50$ epochs with three independent random seeds, and we report the mean and standard deviation over the runs.

Figure~\ref{fig:cifar10_resnet} shows that both \textit{Adam-SHANG} and \textit{Adam-SHANG-s} achieve optimization behavior comparable to standard adaptive optimizers. In particular, \textit{Adam-SHANG-s} yields slightly better test accuracy in this setting, while \textit{Adam-SHANG} remains competitive in training loss reduction. These observations indicate that the proposed reformulations, although primarily motivated by theory, remain practically meaningful in stochastic deep learning tasks.


\begin{figure}[!htbp]
	\centering
	\begin{subfigure}{0.48\textwidth}
		\centering
		\includegraphics[width=\linewidth]{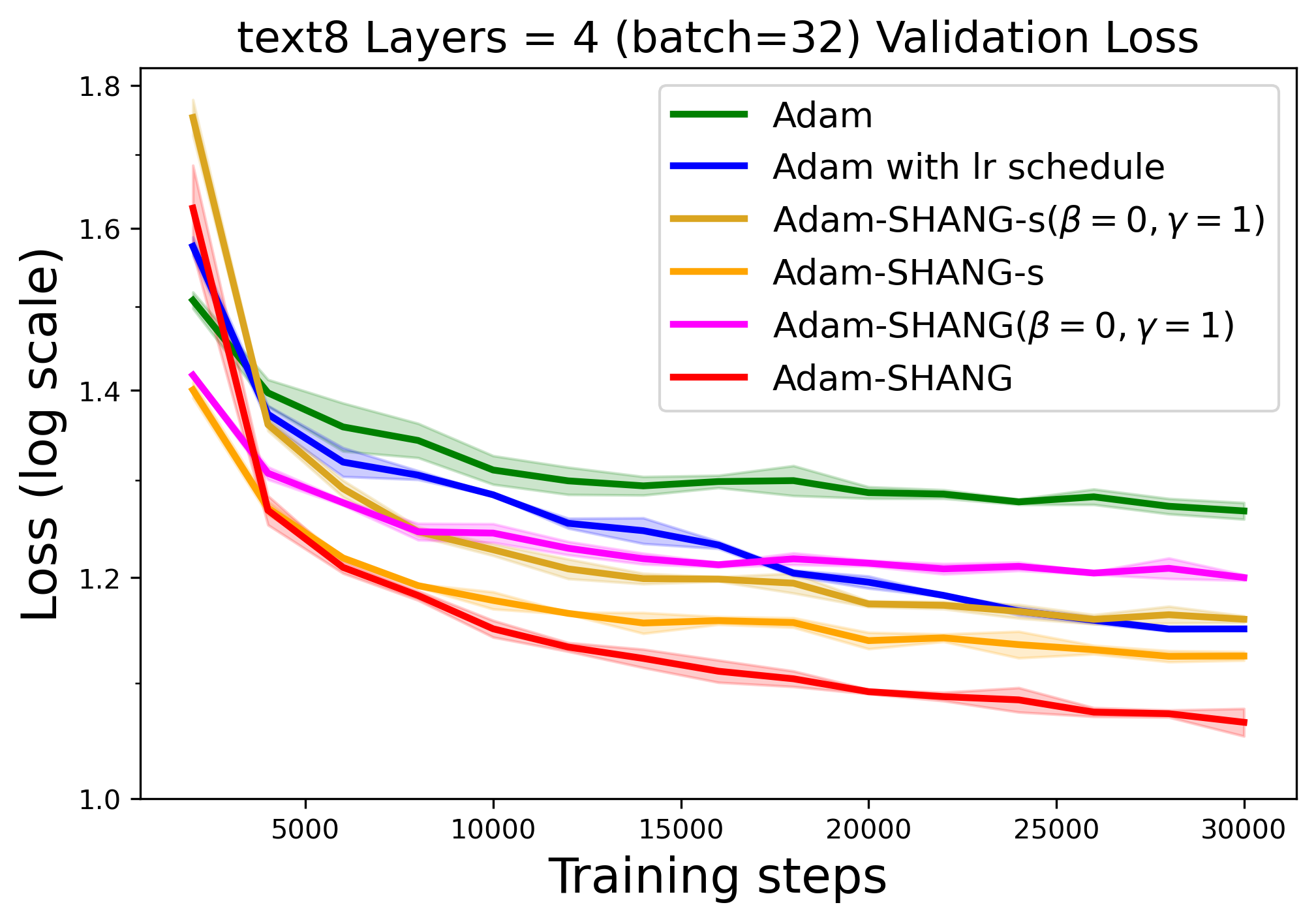}
	\end{subfigure}\hfill
	\begin{subfigure}{0.48\textwidth}
		\centering
		\includegraphics[width=\linewidth]{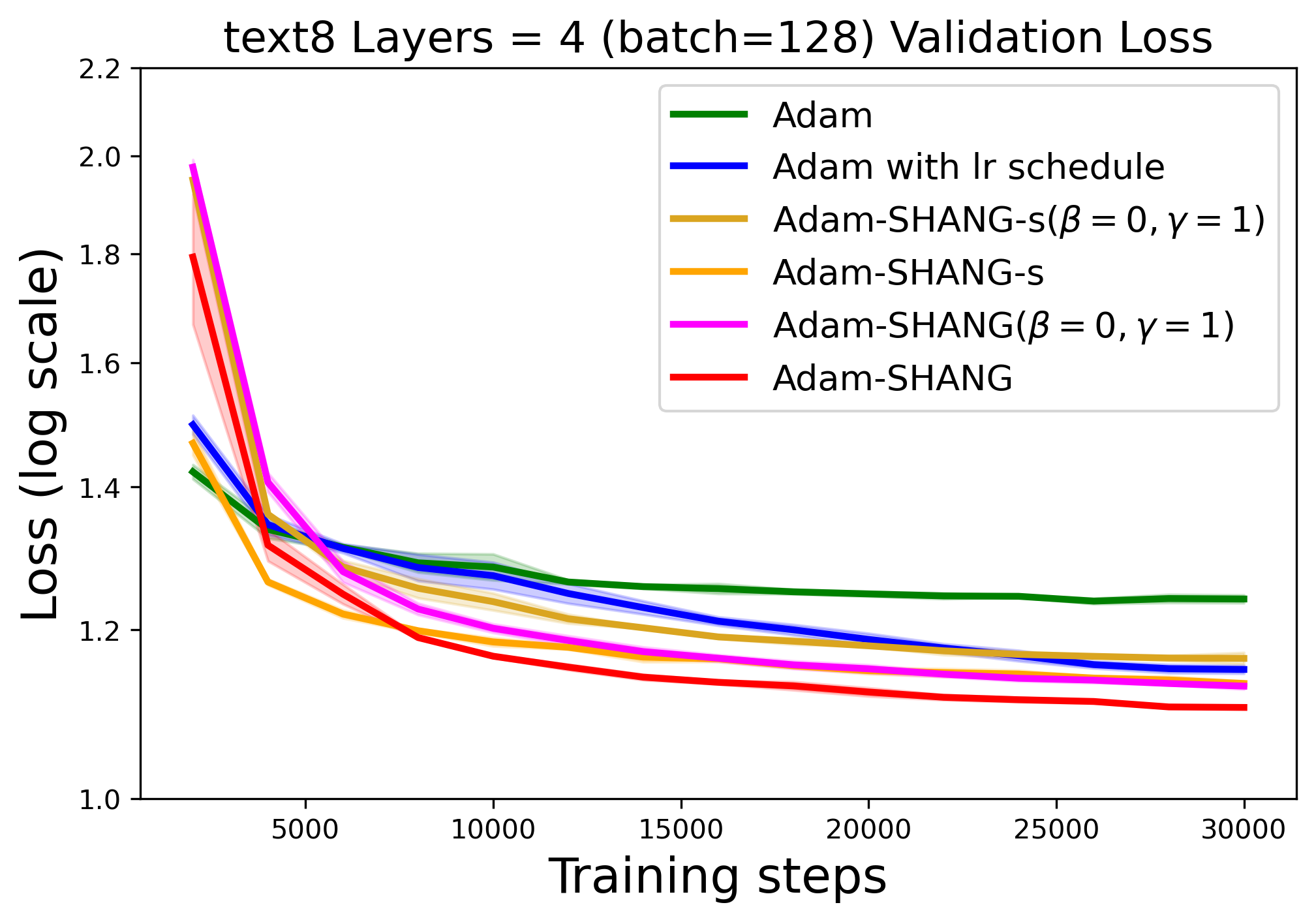}
	\end{subfigure}\hfill
	\caption{\small Ablation on the curvature-aware correction term for training the 4-layer Transformer on \texttt{text8}}
\label{fig:test beta_text8}
\end{figure}

\begin{figure}[!htbp]
	\centering
	\begin{subfigure}{0.48\textwidth}
	\centering
	\includegraphics[width=\linewidth]{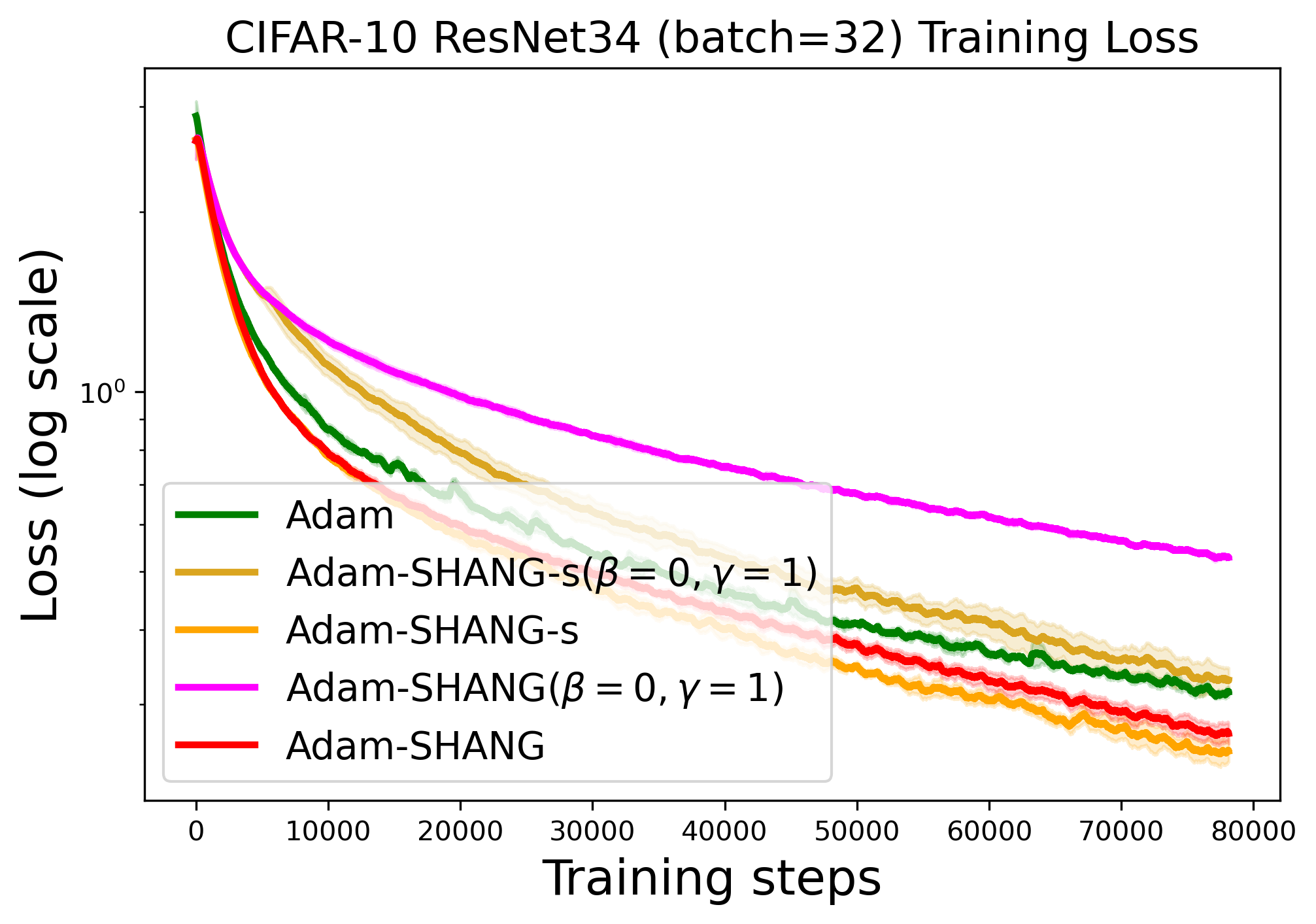}
\end{subfigure}\hfill
\begin{subfigure}{0.48\textwidth}
	\centering
	\includegraphics[width=\linewidth]{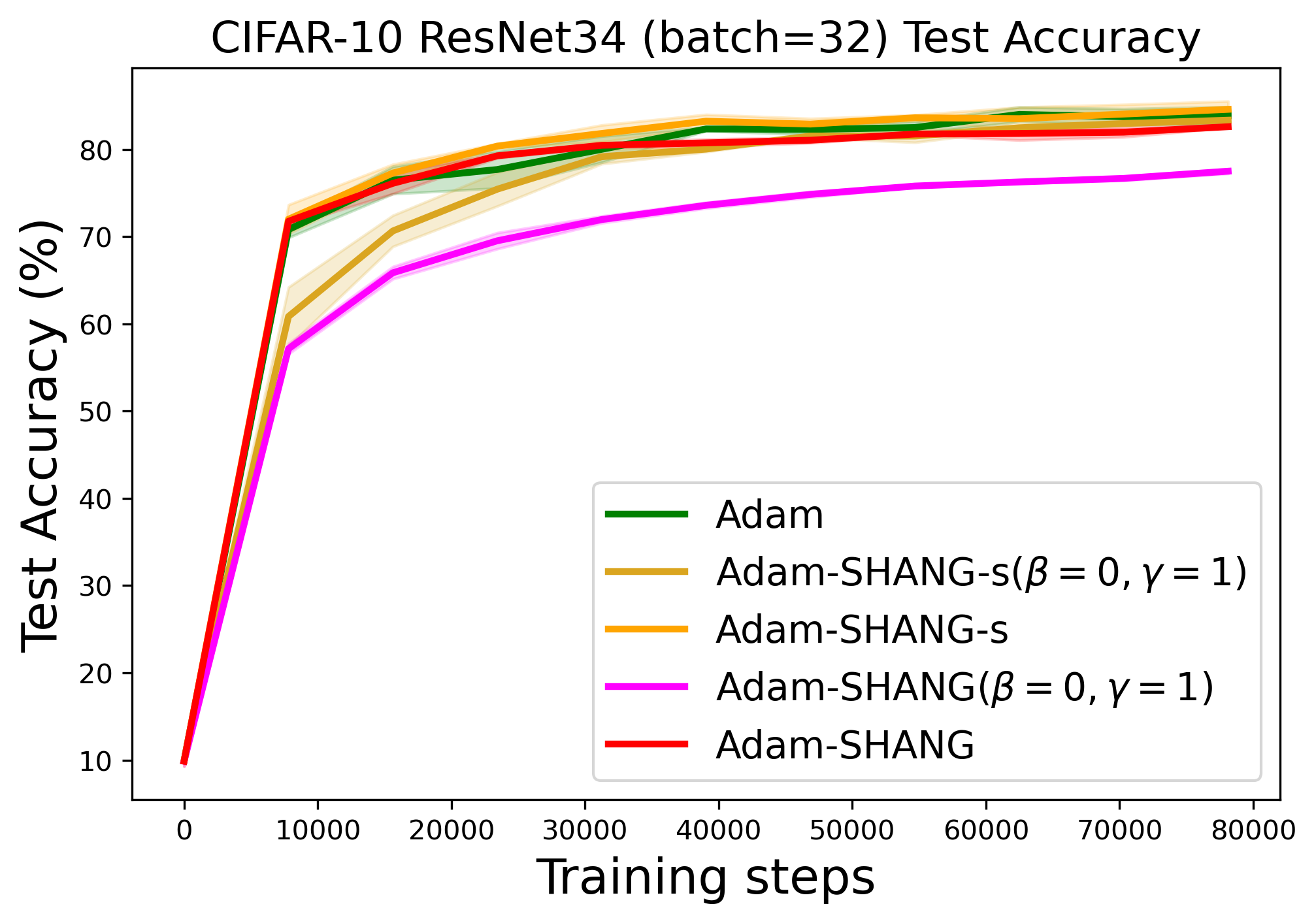}
\end{subfigure}\hfill
\begin{subfigure}{0.48\textwidth}
	\centering
	\includegraphics[width=\linewidth]{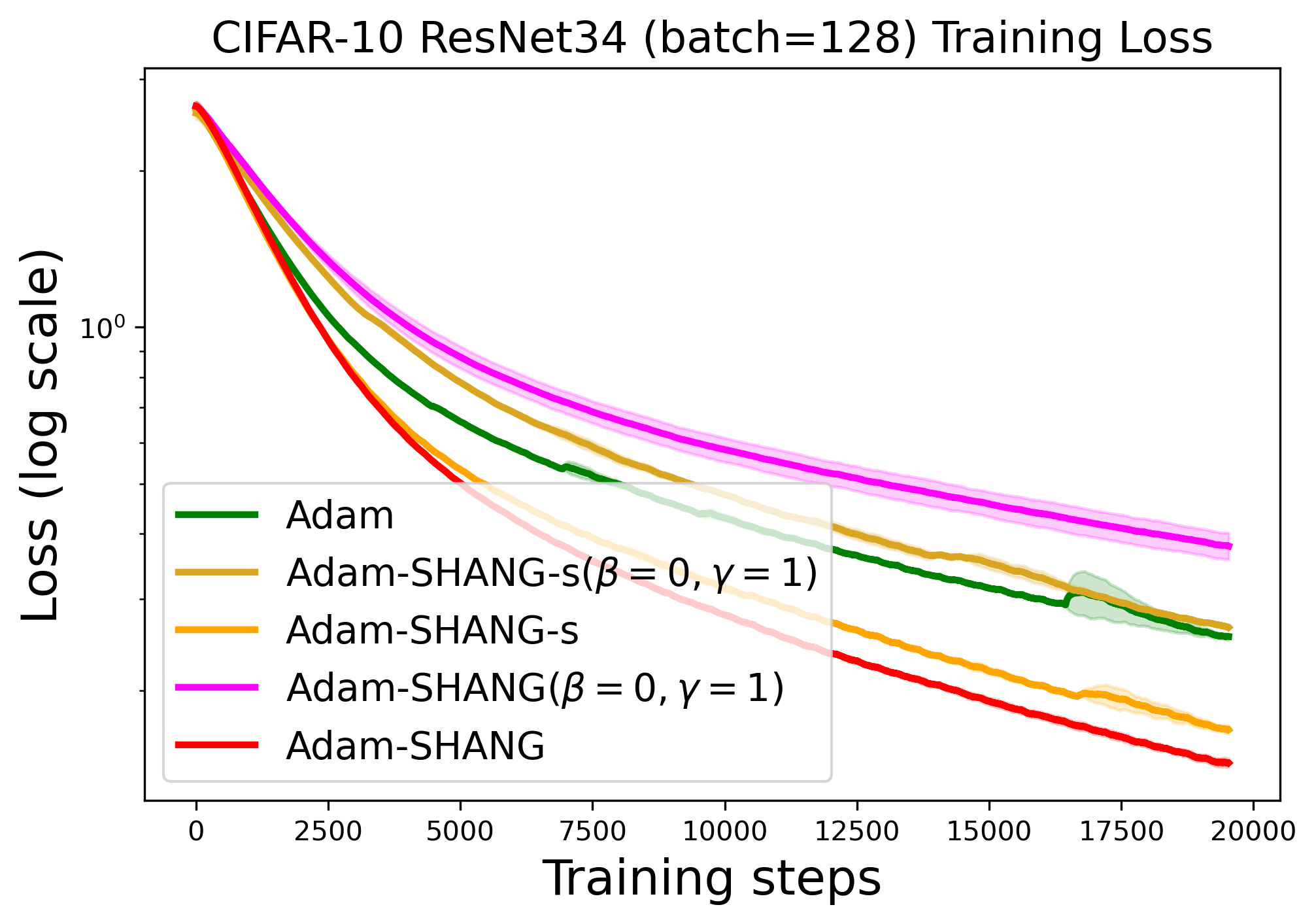}
\end{subfigure}\hfill
\begin{subfigure}{0.48\textwidth}
	\centering
	\includegraphics[width=\linewidth]{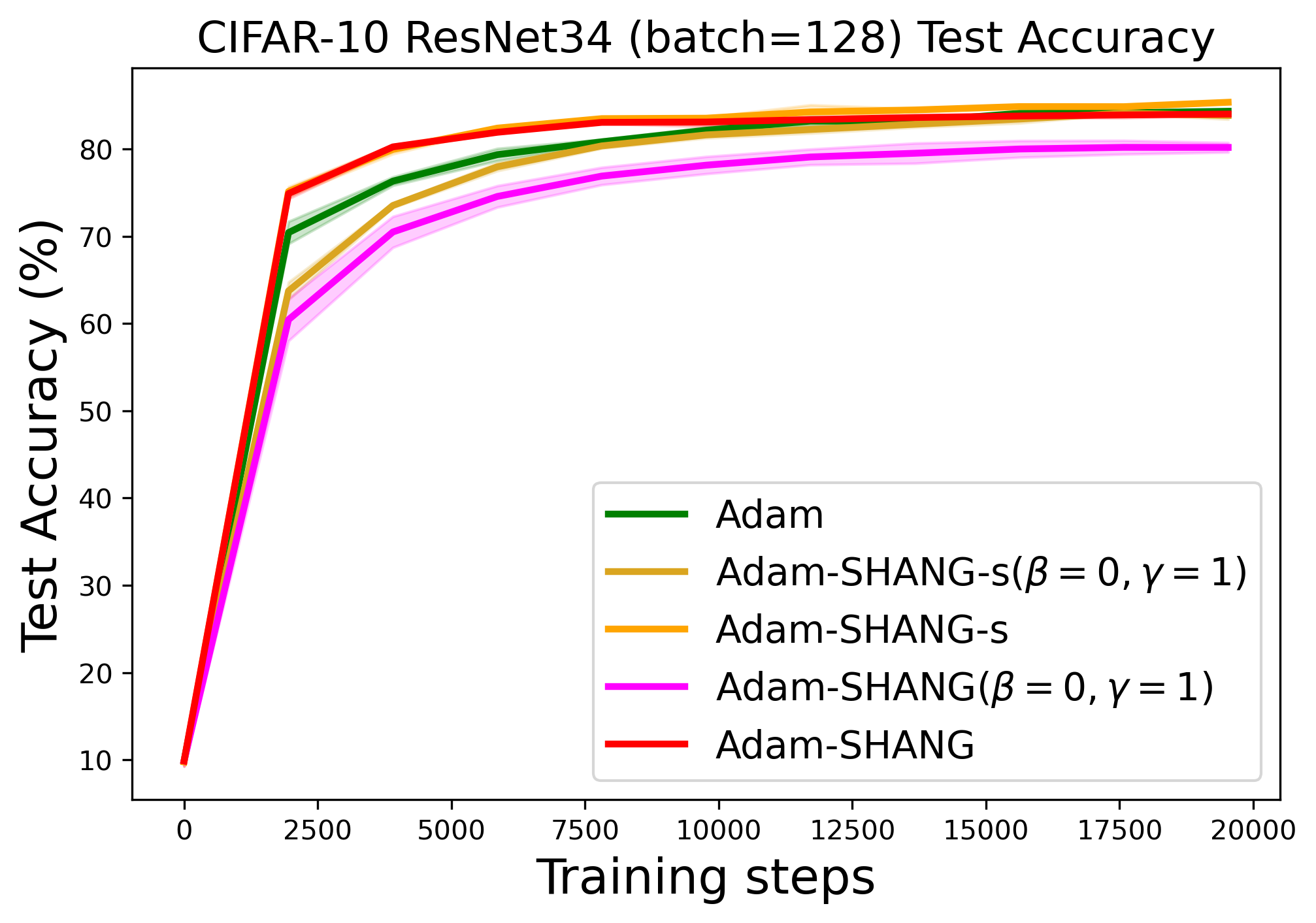}
\end{subfigure}\hfill
	\caption{\small Ablation on the curvature-aware correction term for training ResNet-34 on CIFAR-10.} 
\label{fig:test beta_cifar}
\end{figure}

\begin{figure}[!htbp]
	\centering
	\begin{subfigure}{0.48\textwidth}
		\centering
		\includegraphics[width=\linewidth]{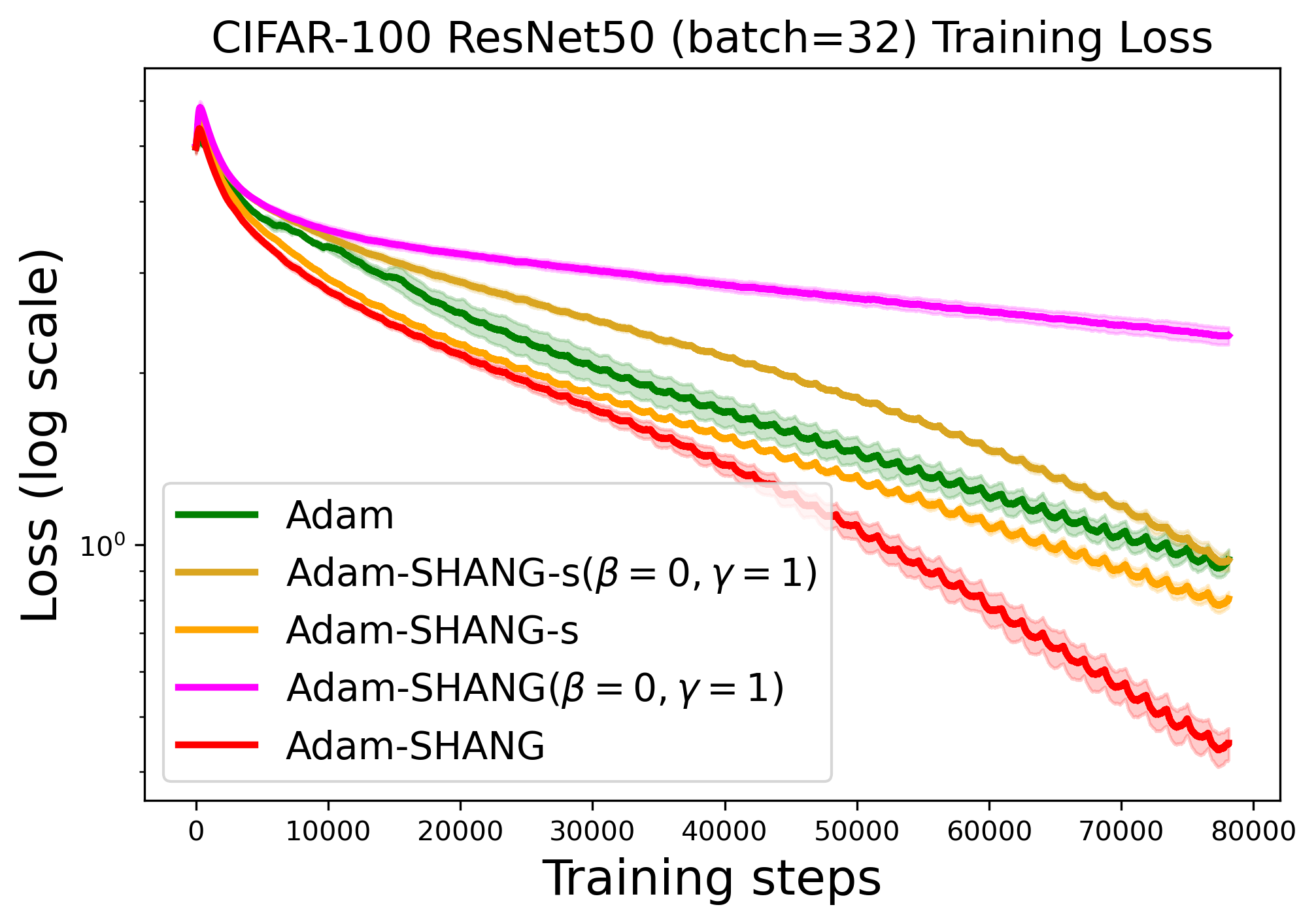}
	\end{subfigure}\hfill
	\begin{subfigure}{0.48\textwidth}
		\centering
		\includegraphics[width=\linewidth]{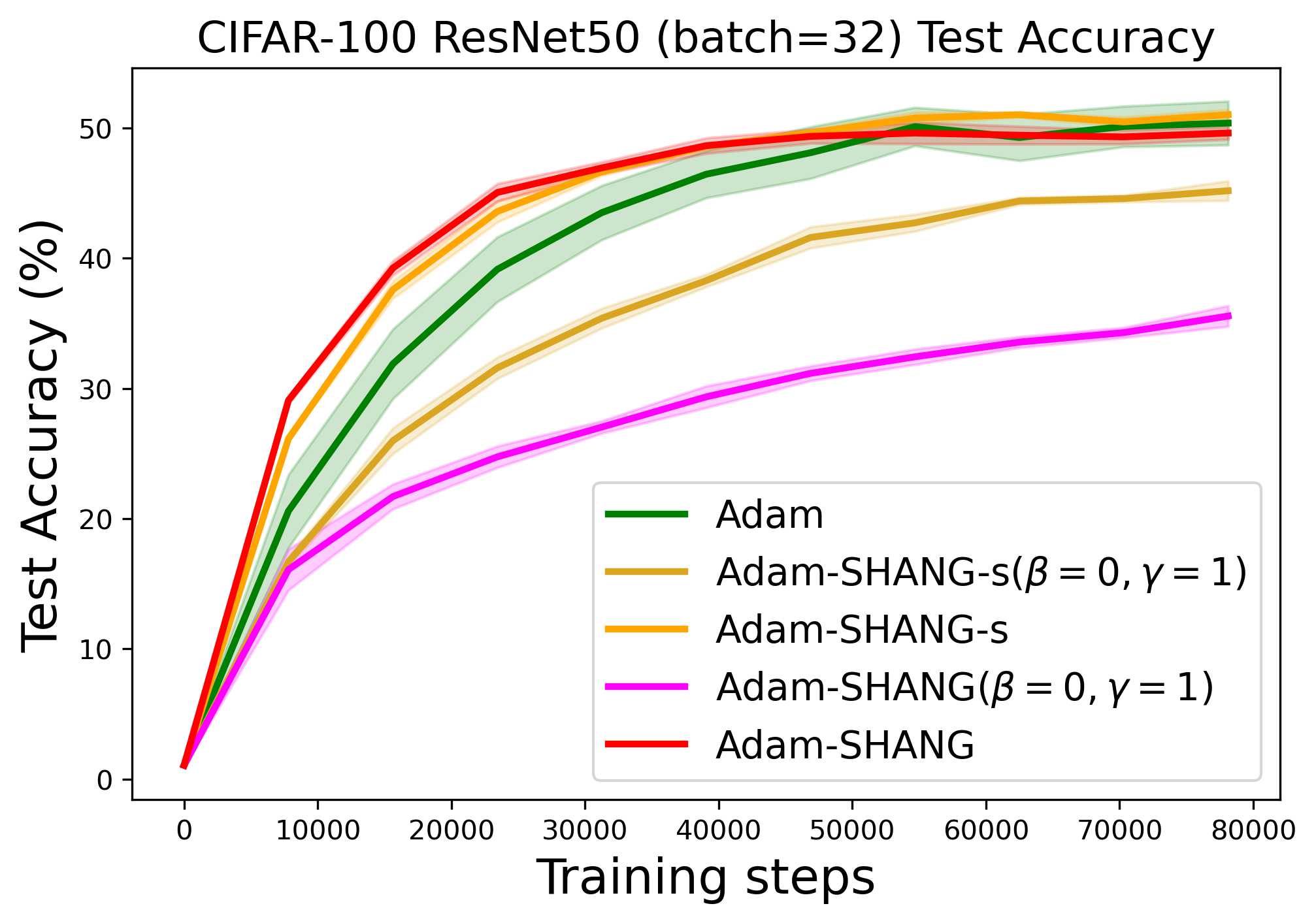}
	\end{subfigure}\hfill
	\begin{subfigure}{0.48\textwidth}
		\centering
		\includegraphics[width=\linewidth]{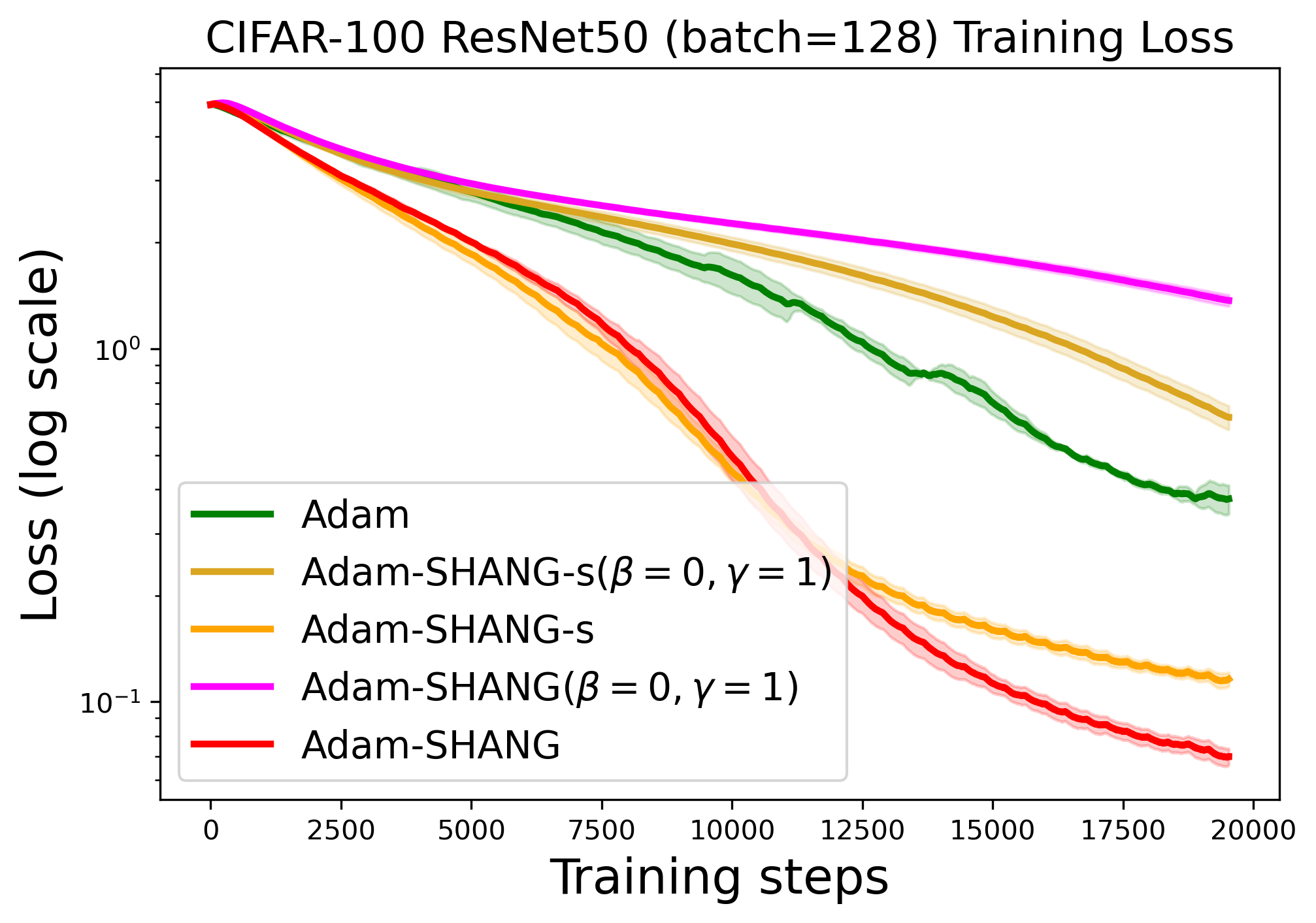}
	\end{subfigure}\hfill
	\begin{subfigure}{0.48\textwidth}
		\centering
		\includegraphics[width=\linewidth]{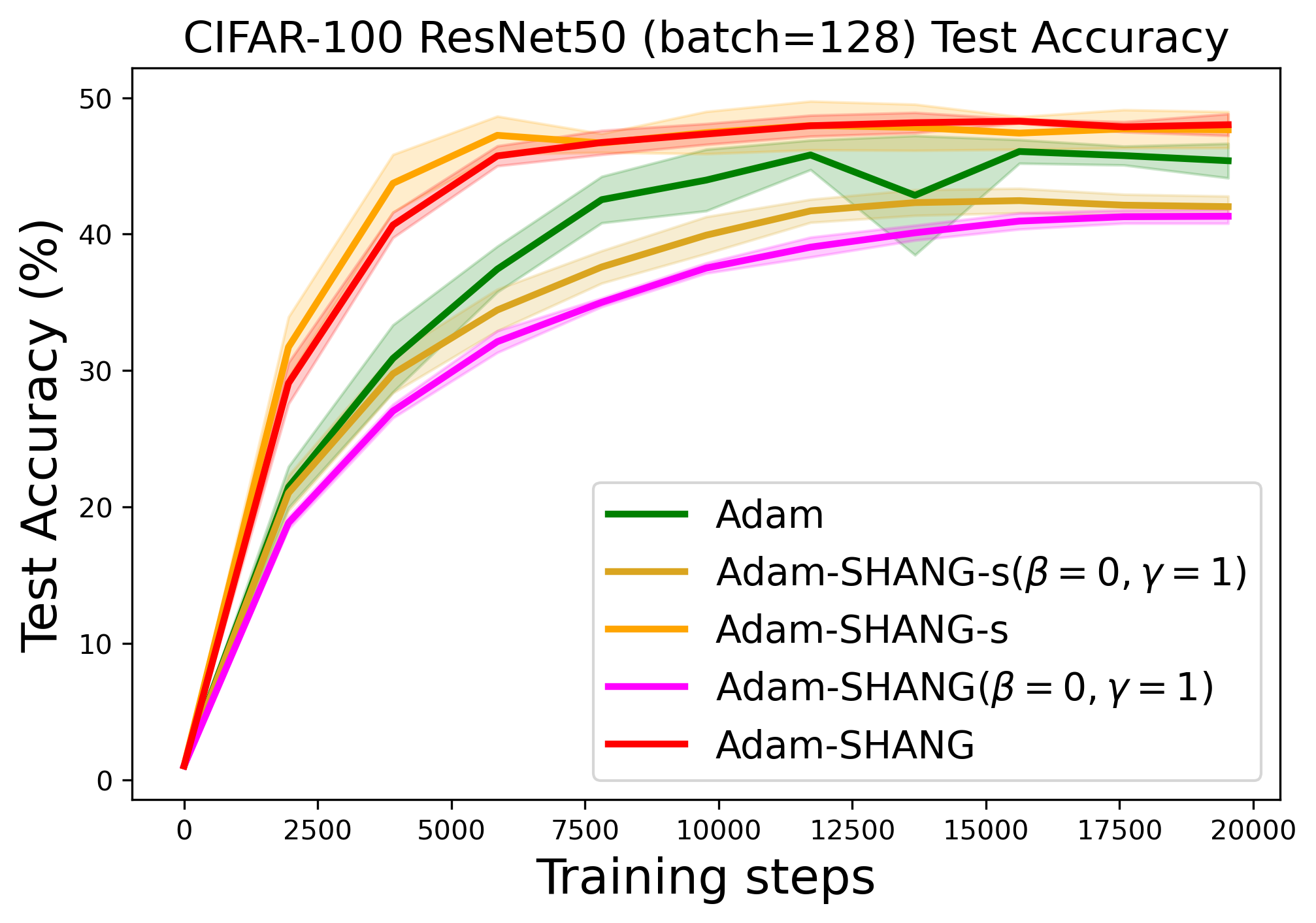}
	\end{subfigure}\hfill
	\caption{\small Ablation on the curvature-aware correction term for training ResNet-50 on CIFAR-100.} 
	\label{fig:test beta_cifar100}
\end{figure}

\subsection{Ablation on the curvature correction}\label{test:ablation of correction}

To examine the role of the curvature-aware correction term, we include two additional ablation variants in the deep learning experiments: {\it Adam-SHANG}$(\beta=0,\gamma=1)$ and {\it Adam-SHANG-s}$(\beta=0,\gamma=1)$. 
In these ablations, the base scale $\lambda$ is kept the same as in the corresponding main experiments. 
Setting $\beta=0$ removes the extra gradient correction term, while setting $\gamma=1$ keeps the preconditioner update in a normalized Adam-style scale. 
In particular, {\it Adam-SHANG-s}$(\beta=0,\gamma=1)$ provides the closest Adam-style counterpart within our framework, whereas {\it Adam-SHANG}$(\beta=0,\gamma=1)$ isolates the effect of removing the correction term in the semi-implicit discretization. 
This comparison is designed to assess whether the empirical advantages of the full methods are related to the curvature-aware correction mechanism highlighted by the theory.

The ablation results in Figures~\ref{fig:test beta_text8}, \ref{fig:test beta_cifar}, and \ref{fig:test beta_cifar100} further clarify the role of the curvature-aware correction term. On \texttt{text8}, all variants within our framework outperform Adam without an external learning-rate schedule, highlighting that such a schedule is crucial for Adam in Transformer training. Although {\it Adam-SHANG-s}$(\beta=0,\gamma=1)$ is the closest Adam-style counterpart in our framework, its built-in decay of $\alpha_k$ makes its behavior closer to Adam with external decay, while still yielding faster loss reduction in the early stage.

On the more standard image-classification tasks CIFAR-10 and CIFAR-100, the five methods behave more similarly overall. Even so, for both batch sizes, the full versions of {\it Adam-SHANG} and {\it Adam-SHANG-s} consistently achieve lower training loss than their corresponding $\beta=0$ variants. The effect is especially pronounced for {\it Adam-SHANG}, where removing the correction leads to markedly slower optimization and a clearly worse final loss. For {\it Adam-SHANG-s}, the degradation is milder but remains consistent across training, indicating that the synchronous Adam-style discretization is intrinsically more robust while still benefiting from the curvature-aware correction.

Beyond the overall advantage of the full methods, the ablations also reveal a distinction between optimization and generalization effects. Across all three tasks, the curvature-aware correction produces the most consistent gains in training or validation loss, whereas the improvement in test accuracy is more task-dependent. In particular, the effect is relatively modest on CIFAR-10, but becomes clearer on the more challenging \texttt{text8} and CIFAR-100 tasks. Overall, these results suggest that the correction term plays a stable role in improving optimization, while its contribution to generalization depends more strongly on the complexity and noise level of the task.

We also note that these ablation variants use the same learning rate as the corresponding main experiments and are not separately re-tuned by grid search. Therefore, the comparison should be interpreted as a controlled ablation designed to isolate the role of the curvature correction, rather than as the best achievable performance of the $\beta=0$ variants after task-specific hyperparameter tuning. This point is particularly relevant for {\it Adam-SHANG}$(\beta=0,\gamma=1)$, whose performance may partly reflect the fact that the inherited learning rate is no longer optimal once the correction term is removed.

\end{document}